\newtheorem{thm}{Theorem}
\newtheorem{lem}[thm]{Lemma}
\newtheorem{cor}[thm]{Corollary}
\newtheorem{prop}[thm]{Proposition}
\theoremstyle{definition}
\theoremstyle{remark}
\newtheorem*{rmk}{Remark}
\newcommand{\eps}{\varepsilon}
\newcommand{\DEF}{{:=}}
\newcommand{\FED}{{=:}}
\newcommand{\PT}[1]{\mathbf{#1}}
\newcommand{\re}{\mathop{\mathrm{Re}}}
\DeclareMathOperator{\dd}{\mathrm{d}}
\DeclareMathOperator{\numint}{Q}
\DeclareMathOperator{\betafcn}{B}
\DeclareMathOperator{\gammafcn}{\Gamma}
\DeclareMathOperator{\GegenbauerC}{\mathrm{C}}
\DeclareMathOperator{\digammafcn}{\psi}
\DeclareMathOperator{\harmY}{Y}
\DeclareMathOperator{\phase}{ph}
\DeclareMathOperator{\wce}{wce}
\DeclareMathOperator{\HyperF}{F}
\DeclareMathOperator{\HyperTildeF}{\widetilde{F}}
\newcommand{\Hypergeom}[5]{{\sideset{_#1}{_#2}\HyperF\!\left(\substack{\displaystyle#3\\\displaystyle#4};#5\right)}}
\newcommand{\HypergeomReg}[5]{{\sideset{_#1}{_#2}\HyperTildeF\!\left(\substack{\displaystyle#3\\\displaystyle#4};#5\right)}}
\newcommand{\AppellFtwo}[3]{{\HyperF_{2}\!\left(\substack{\displaystyle#1\\\displaystyle#2};#3\right)}}
\newcommand{\AppellFfour}[3]{{\HyperF_{4}\!\left(\substack{\displaystyle#1\\\displaystyle#2};#3\right)}}
\newcommand{\KampedeFerietA}[6]{{\HyperF_{{#1}}^{{#2}}\!\left[\left. \substack{#3} \right| \left. \substack{#4} \right| \left. \substack{#5} \right|#6\right]}}
\newcommand{\Pochhsymb}[2]{{\left(#1\right)_{#2}}}
\title[Sobolev spaces on the sphere and Stolarsky's invariance principle]{A characterization of Sobolev spaces on the sphere and an extension of Stolarsky's invariance principle to arbitrary smoothness}
\author[J. S. Brauchart and J. Dick]{Johann S. Brauchart\textasteriskcentered{} and Josef Dick\textdagger{}} 
\thanks{\noindent \textasteriskcentered The research of this author was supported by an Australian Research Council Discovery Project. \\ \textdagger The research of this author was supported by an Australian Research Council Queen Elizabeth 2 Fellowship.}
\date{\today}
\begin{document}

\address{J. S. Brauchart and J. Dick:
School of Mathematics and Statistics, 
University of New South Wales,
Sydney, NSW, 2052,
Australia }
\email{j.brauchart@unsw.edu.au, josef.dick@unsw.edu.au}

\begin{abstract}
In this paper we study reproducing kernel Hilbert spaces of arbitrary smoothness on the sphere $\mathbb{S}^d \subset \mathbb{R}^{d+1}$. The reproducing kernel is given by an integral representation using the truncated power function $(\PT{x} \cdot \PT{z} - t)_+^{\beta-1}$ defined on spherical caps centered at $\PT{z}$ of height $t$, which reduce to an integral over indicator functions of spherical caps as studied in [J. Brauchart, J. Dick, arXiv:1101.4448v1 [math.NA], to appear in Proc. Amer. Math. Soc.] for $\beta = 1$. This is in analogy to the generalization of the reproducing kernel to arbitrary smoothness on the unit cube. 

We show that the reproducing kernel is a sum of a Kamp{\'e} de F{\'e}riet function and the Euclidean distance $\|\PT{x}-\PT{y}\|$ of the arguments of the kernel raised to the power of $2\beta -1$ if $2\beta - 1$ is not an even integer; otherwise the logarithm of the distance $\|\PT{x}-\PT{y}\|$ appears. For $\beta \in \mathbb{N}$ the Kamp{\'e} de F{\'e}riet function reduces to a polynomial, giving a simple closed form expression for the reproducing kernel.

Using this space we can generalize Stolarsky's invariance principle to arbitrary smoothness. Previously, Warnock's formula, which is the analogue to Stolarsky's invariance principle for the unit cube $[0,1]^s$, has been generalized using similar techniques [J. Dick, Ann. Mat. Pura. Appl., (4) 187 (2008), no. 3, 385--403].
\end{abstract}

\keywords{equal weight numerical integration, generalized discrepancy, reproducing kernel Hilbert space, Sobolev space, Sphere, spherical designs, Stolarsky's Invariance Principle, Warnock’s formula, worst-case numerical integration error} 
\subjclass[2000]{Primary 41A30; Secondary 11K38, 41A55}

\maketitle


\section{Introduction and statement of results}

Let $\mathbb{S}^d = \{\PT{x} \in \mathbb{R}^{d+1}: \PT{x} \cdot \PT{x} = 1 \}$ be the unit sphere in $\mathbb{R}^{d+1}$ provided with the normalized Lebesgue surface area measure $\sigma_d$; that is $\int_{\mathbb{S}^d} \dd \sigma_d = 1$. The surface area of $\mathbb{S}^d$ is denoted by $\omega_d$. For future references we define the constant
\begin{equation} \label{eq:C.d}
C_d \DEF \frac{1}{d} \frac{\omega_{d-1}}{\omega_d} = \frac{1}{d} \frac{\gammafcn((d+1)/2)}{\sqrt{\pi} \, \gammafcn(d/2)}.
\end{equation}
A {\em spherical cap} centred at $\PT{z} \in \mathbb{S}^d$ with ``height'' $t \in [-1,1]$ is the set 
\begin{equation*}
C( \PT{z}; t ) \DEF \left\{ \PT{x} \in \mathbb{S}^d: \PT{x} \cdot \PT{z} \geq t \right\}.
\end{equation*}

Recently, we \cite{BrDi2011arXiv} used the kernel function
\begin{equation}\label{eq_kernel_sphere}
K_{\mathcal{C}}( \PT{x}, \PT{y} ) = \int_{-1}^1 \int_{\mathbb{S}^d} 1_{C( \PT{x}, t)}( \PT{z} ) 1_{C( \PT{y}, t)}( \PT{z} ) \dd \sigma_d( \PT{z} ) \dd t, \qquad \PT{x}, \PT{y} \in \mathbb{S}^d,
\end{equation}
where $1_A$ is the {\em indicator function} of the set $A \subseteq \mathbb{R}^{d+1}$, to reprove {\em Stolarsky's invariance principle} (cf. K. Stolarsky~\cite{St1973}) 
\begin{equation}
\begin{split} \label{eq:Stolarsky.s.invariance.principle}
&\frac{1}{N^2} \sum_{j=1}^N \sum_{k=1}^N \left\| \PT{x}_j - \PT{x}_k \right\| + \frac{1}{C_d} \int_{-1}^1 \int_{\mathbb{S}^d} \left| \frac{1}{N} \sum_{j=1}^N 1_{C(\PT{z};t)}(\PT{x}_k) - \sigma_d( C( \PT{z}; t ) ) \right|^2 \dd \sigma_d( \PT{z} ) \, \dd t \\
&\phantom{equals}= \int_{\mathbb{S}^d} \int_{\mathbb{S}^d} \left\| \PT{x} - \PT{y} \right\| \dd \sigma_d( \PT{x} ) \dd \sigma_d( \PT{y} ) 
\end{split}
\end{equation}
using reproducing kernel Hilbert space techniques. In this way we provided the theoretical underpinning for the particular role this principle plays in the theory of worst-case error of numerical integration rules for functions in a Sobolev space of index $(d+1)/2$ over the sphere $\mathbb{S}^d$ provided with the reproducing kernel $K_{\mathcal{C}}( \PT{x}, \PT{y} )$ as observed and utilized in \cite{BrWo2010d_pre}. This kernel has the very simple closed form (see \cite{BrDi2011arXiv})
\begin{equation} \label{eq:cal.C.kernel}
K_{\mathcal{C}}( \PT{x}, \PT{y} ) = 1 - C_d \left\| \PT{x} - \PT{y} \right\|, \qquad \PT{x}, \PT{y} \in \mathbb{S}^d.
\end{equation}
A generalization of the corresponding reproducing kernel Hilbert space to different smoothness classes has been studied in \cite[Section~6]{BrWo2010d_pre}. This is done via the decay of the Fourier coefficients in the series expansion using spherical harmonics of the reproducing kernel. However, no extension of Stolarsky's invariance principle for this generalization is known.

On the other hand, Stolarsky's invariance principle has an analogue for discrepancy defined on the high-dimensional unit cube $[0,1]^s$ with respect to rectangles anchored at the origin $\PT{0}$. This analogue is called Warnock's formula~\cite{warnock}, see also \cite[Chapter~2]{DP10}. The reproducing kernel for the unit cube is defined analogously to \eqref{eq_kernel_sphere}, where the spherical caps are replaced by rectangles anchored at $\PT{0}$. The reproducing kernel Hilbert space which corresponds to this kernel consists of functions with square integrable partial mixed derivatives of order $1$, see \cite[Chapter~2]{DP10} for details. In \cite{D08} this reproducing kernel was generalized to include functions of fractional smoothness. This generalization was achieved by replacing the indicator function $1_{[0, z)}(x)$ by $(x- z)_+^{\beta-1}$ for $\beta > 1/2$. The truncated power function $(x - z)_+^{\beta-1}$ arises from the Taylor expansion of a $\beta$-times continuously differentiable function $f:[0,1]\to\mathbb{R}$ given by
\begin{equation}\label{eq_Taylor}
f(x) = \sum_{r=0}^{\beta-1} \frac{f^{(r)}(0)}{r!} x^r + \int_0^1 \frac{f^{(\beta)}(t)}{(\beta-1)!} (x-t)_+^{\beta-1} \,\mathrm{d} t.
\end{equation}
The Taylor series can be written in a concise form using a reproducing kernel Hilbert space. Namely, if we define a reproducing kernel
\begin{equation*}
K(x,y) = \sum_{r=0}^{\beta-1} x^r y^r + \int_0^1 (x-t)_+^{\beta-1} (y-t)_+^{\beta-1} \,\mathrm{d} t
\end{equation*}
and the corresponding inner product
\begin{equation*}
\langle f, g \rangle = \sum_{r=0}^{\beta-1} \frac{f^{(r)}(0)}{r!} \frac{g^{(r)}(0)}{r!} +  \int_0^1 \frac{f^{(\beta)}(t)}{(\beta-1)!} \frac{g^{(\beta)}(t)}{(\beta-1)!} \,\mathrm{d} t,
\end{equation*}
then \eqref{eq_Taylor} can be written as
\begin{equation*}
f(x) = \langle f, K(\cdot, x) \rangle.
\end{equation*}
In \cite{D08} it was shown that this approach yields a generalization of the geometric discrepancy to a fractional geometric discrepancy.

The aim of this paper is to obtain analogue results for the sphere $\mathbb{S}^d$ as have been shown in the cube case. In particular we prove a generalization of Stolarsky's invariance principle for function classes of fractional smoothness considered in \cite[Section~6]{BrWo2010d_pre}. This case is not covered by the results in \cite{St1973}.

As in the cube case, the identities
\begin{equation} \label{eq:generalizing.indicator.fcn}
1_{C( \PT{z}; t )}( \PT{x} ) = 1_{[t, 1]}( \PT{x} \cdot \PT{z} ) = \left( \PT{x} \cdot \PT{z} - t \right)_+^0, \qquad \PT{x}, \PT{z} \in \mathbb{S}^d, -1 \leq t \leq 1,
\end{equation}
where $x_+^\alpha$ denotes the {\em truncated power function} motivate the following generalization. Let $\beta > 1/2$. For $\PT{x}$, $\PT{y}$ in $\mathbb{S}^d$ we define the function $\mathcal{K}_\beta : \mathbb{S}^d \times \mathbb{S}^d \to \mathbb{R}$ by means of
\begin{equation} \label{eq:kernel}
\mathcal{K}_\beta( \PT{x}, \PT{y} ) \DEF \int_{-1}^1 \int_{\mathbb{S}^d} \left( \PT{x} \cdot \PT{z} - t \right)_+^{\beta - 1} \left( \PT{y} \cdot \PT{z} - t \right)_+^{\beta - 1} \dd \sigma_d( \PT{z} ) \dd t.
\end{equation}
It can be checked that $\mathcal{K}_\beta(\PT{x}, \PT{y} )  < \infty$ for all $\PT{x}, \PT{y} \in \mathbb{S}^d$ and all $\beta > 1/2$. Note that for $\beta \le 1/2$ this is not true, since $\mathcal{K}_\beta(\PT{x}, \PT{y}) > \mathcal{K}_{\beta'}(\PT{x}, \PT{y})$ for $\beta < \beta'$ and $\mathcal{K}_{1/2}(\PT{x}, \PT{x}) = \infty$.

The function $\mathcal{K}_\beta$ is obviously symmetric, that is, $\mathcal{K}_\beta( \PT{x}, \PT{y} ) = \mathcal{K}_\beta( \PT{y}, \PT{x} )$. Further, let arbitrary $a_1, \dots, a_N \in \mathbb{C}$ and $\PT{x}_1, \dots, \PT{x}_N \in \mathbb{S}^d$ be given. Then we have
\begin{align*}
\sum_{j=1}^N \sum_{k=1}^N a_j \overline{a_k} \, \mathcal{K}_\beta( \PT{x}_j, \PT{x}_k ) 
&= \int_{-1}^1 \int_{\mathbb{S}^d} \sum_{j=1}^N \sum_{k=1}^N a_j \overline{a_k} \left( t - \PT{x}_j \cdot \PT{z} \right)_+^{\beta - 1} \left( t - \PT{x}_k \cdot \PT{z} \right)_+^{\beta - 1} \dd \sigma_d( \PT{z} ) \dd t \\
&= \int_{-1}^1 \int_{\mathbb{S}^d} \left| \sum_{k=1}^N a_j \left( t - \PT{x}_j \cdot \PT{z} \right)_+^{\beta - 1} \right|^2  \dd \sigma_d( \PT{z} ) \dd t \geq 0.
\end{align*}

Thus, the function $\mathcal{K}_\beta( \PT{x}, \PT{y} )$, is also positive definite. From \cite{Ar1950} it follows that $\mathcal{K}_\beta( \PT{x}, \PT{y} )$ is a reproducing kernel for each $\beta > 1/2$ that uniquely defines a reproducing kernel Hilbert space $\mathcal{H}_\beta( \mathcal{K}_\beta, \mathbb{S}^d)$ of functions endowed with a certain inner product which we denote by $(\cdot, \cdot)_{\mathcal{K}_\beta}$.

Let $\mathcal{H}_\beta \DEF \mathcal{H}_\beta( \mathcal{K}_\beta, \mathbb{S}^d)$. We note that the reproducing kernel Hilbert space $\mathcal{H}_\beta$ consists of all functions $f_1, f_2: \mathbb{S}^d \to \mathbb{R}$ which permit the integral representation
\begin{equation} \label{eq:integral.representation.f.i}
f_i( \PT{x} ) = \int_{-1}^1 \int_{\mathbb{S}^d} g_i( \PT{z}; t) \left( \PT{x} \cdot \PT{z} - t \right)_+^{\beta - 1} \, \dd \sigma_d( \PT{z} ) \dd t, \qquad i = 1, 2,
\end{equation}
for some functions $g_1, g_2: \mathbb{S}^d \times [-1,1] \to \mathbb{R}$ with $g_1, g_2 \in \mathbb{L}_2( \mathbb{S}^d \times [-1,1], \sigma_d )$ of square-integrable functions on $\mathbb{S}^d \times [-1,1]$. See Appendix~\ref{sec_appendix_a} for a proof of this statement. The function $g_i$ is called the potential function of $f_i$. Thus the inner product for the functions $f_1, f_2 \in \mathcal{H}_\beta$ is given by
\begin{equation} \label{eq:inner.product.1}
( f_1, f_2 )_{\mathcal{K}_{\beta}} = \int_{-1}^1 \int_{\mathbb{S}^d} g_1( \PT{z}; t) g_2( \PT{z}; t) \, \dd \sigma_d( \PT{z} ) \dd t.
\end{equation}
For a function $f \in \mathcal{H}_\beta$ one has
\begin{equation*}
\| f \|_{\mathcal{K}_\beta} = \left\{ \int_{-1}^1 \int_{\mathbb{S}^d} \left| g( \PT{z}; t) \right|^2 \, \dd \sigma_d( \PT{z} ) \dd t \right\}^{1/2}.
\end{equation*}
This follows from the uniqueness of the inner product $( \cdot, \cdot )_{\mathcal{K}_\beta}$ induced by the kernel $\mathcal{K}_\beta$.

In the following subsections we describe the results in more details. The proofs can be found in Section~\ref{sec:proofs} and the Appendix.

\subsection{The smoothness of the Hilbert space $\mathcal{H}_\beta( \mathcal{K}_\beta, \mathbb{S}^d)$ and the Sobolev space $\mathbb{H}^s(\mathbb{S}^d)$}

The Hilbert space $\mathbb{L}_2(\mathbb{S}^{d}, \sigma_d)$ of square-integrable real-valued functions on $\mathbb{S}^d$ is endowed with the inner product and induced norm
\begin{equation*}
( f, g )_{\mathbb{L}_2(\mathbb{S}^{d})} \DEF \int_{\mathbb{S}^d} f( \PT{x} ) g( \PT{x} ) \dd \sigma_d( \PT{x} ), \qquad \| f \|_{\mathbb{L}_2(\mathbb{S}^{d})} \DEF \sqrt{( f, f )_{\mathbb{L}_2(\mathbb{S}^{d})}}.
\end{equation*}

In the following we define the Hilbert space $\mathbb{H}^s(\mathbb{S}^d)$. 
Normalized Gegenbauer polynomials $P_n^{(d)}(t) = \GegenbauerC_n^{(d-1)/2}(t) / \GegenbauerC_k^{(d-1)/2}(1)$ are orthogonal on the interval $[-1,1]$ with respect to the weight function $(1 - t^2)^{d/2-1}$ and, thus, related to the unit sphere $\mathbb{S}^d$ as can be seen from the {\em Addition theorem for spherical harmonics}
\begin{equation*}
\sum_{k=1}^{Z(d,n)} \harmY_{n,k}( \PT{x} ) \harmY_{n,k}( \PT{y} ) = Z(d,n) \, P_n^{(d)}(\PT{x} \cdot \PT{y}), \qquad \PT{x}, \PT{y} \in \mathbb{S}^d.
\end{equation*}
The real spherical harmonics $\harmY_{n,k}$ form a complete orthonormal basis 
\begin{equation*}
\left\{ \harmY_{n,k}^{(d)} : k = 1, \dots, Z(d,n); n = 0, 1, \dots \right\}
\end{equation*}
of the Hilbert space $\mathbb{L}_2(\mathbb{S}^{d}, \sigma_d)$ of square-integrable functions on $\mathbb{S}^d$. Here
\begin{equation*} 
Z(d,n) = \left( 2n + d - 1 \right) \frac{\gammafcn(n+d-1)}{\gammafcn(d)\,\gammafcn(n+1)} \sim \frac{2}{\gammafcn(d)} \, n^{d-1} \quad \text{as $n \to \infty$.}
\end{equation*}

Let $s > 0$ and let $\lambda'_0, \lambda'_1, \dots$ be a sequence of positive real numbers such that
\begin{equation} \label{eq:assumption}
\lambda'_{n} \asymp n^{-2s} \qquad \text{as $n \to \infty$.}
\end{equation}
The Sobolev space $\mathbb{H}^s(\mathbb{S}^d)$ over $\mathbb{S}^d$ with index $s$ can be obtained as the completion of the family $C^\infty(\mathbb{S}^d)$ of infinitely differentiable functions on $\mathbb{S}^d$ with respect to the norm
\begin{equation*}
\| f \|_{d,s} \DEF \sqrt{( f, f )_{d,s}}
\end{equation*}
induced by the inner product
\begin{equation}\label{hs_inner_product}
( f, g )_{d,s} \DEF \sum_{n=0}^\infty \sum_{k=1}^{Z(d,n)} (\lambda'_n)^{-1} \hat{f}_{n,k}^{(d)} \, \hat{g}_{n,k}^{(d)},
\end{equation}
where the Laplace-Fourier coefficients are given by
\begin{equation} \label{eq:L-F.coeff}
\hat{f}_{n,k}^{(d)} = ( f, \harmY_{n,k}^{(d)} )_{\mathbb{L}_2(\mathbb{S}^{d})} = \int_{\mathbb{S}^d} f( \PT{x} ) \harmY_{n,k}^{(d)}( \PT{x} ) \dd \sigma_d( \PT{x} ).
\end{equation}
Obviously any norm equivalent to $\|\cdot \|_{d,s}$ will produce the same space of functions. 

For $s>d/2$ the Sobolev space $\mathbb{H}^s(\mathbb{S}^d)$ is a reproducing kernel Hilbert space. If we endow $\mathbb{H}^s(\mathbb{S}^d)$ with the inner product $(\cdot, \cdot)_{d,s}$ given by \eqref{hs_inner_product}, then the corresponding reproducing kernel is given by
\begin{equation*}
K_{d,s}( \PT{x}, \PT{y} ) = \sum_{n=0}^\infty \sum_{k=1}^{Z(d,n)} \lambda'_n \harmY_{n,k}( \PT{x} ) \harmY_{n,k}( \PT{y} ) = \sum_{n=0}^\infty \lambda'_n \, Z(d,n) \, P_n^{(d)}( \PT{x} \cdot \PT{y} ), \qquad \PT{x}, \PT{y} \in \mathbb{S}^d.
\end{equation*}
The reproducing kernel $K_{d,s}$ satisfies the reproducing kernel properties
\begin{equation*}
K_{d,s}({\PT \cdot},{\PT x}) \in \mathbb{H}^s(\mathbb{S}^d) \quad \mbox{ and } \quad ( f, K_{d,s}({\PT \cdot},{\PT x}) )_{d,s} = f({\PT x}).
\end{equation*}
This is justified by the fact that point-evaluation is bounded in $\mathbb{H}^s(\mathbb{S}^d)$ for $s>d/2$ by  \eqref{eq:assumption}. Imbedding theorems relate Sobolev spaces $\mathbb{H}^s(\mathbb{S}^d)$ and classical function spaces $C^{k}(\mathbb{S}^{d})$: $\mathbb{H}^s(\mathbb{S}^d)$ is a subspace of $C^{k}(\mathbb{S}^{d})$ if $s>k+d/2$.

For further details we refer the interested reader to \cite{BrWo2010d_pre}. Therein, in Section~6, a particular instance of a reproducing kernel Hilbert space with inner product and reproducing kernel as above has been studied. The reproducing kernel studied there has the property that it can easily be evaluated at given function values, since it can be written as a polynomial of $\PT{x} \cdot \PT{y}$. However, no extension of Stolarsky's invariance principle seems possible for this kernel function.

The following theorem shows that the reproducing kernel Hilbert space $\mathcal{H}_\beta$ corresponding to the reproducing kernel $\mathcal{K}_\beta$ coincides with $\mathbb{H}^s(\mathbb{S}^d)$.

\begin{thm} \label{thm:identification.thm}
Let $d \geq 2$ and suppose that $\beta > 1/2$. Then the reproducing kernel $\mathcal{K}_\beta$ has the representation
\begin{equation} \label{eq:identification.kernel.expansion}
\mathcal{K}_\beta( \PT{x}, \PT{y} ) = \sum_{n=0}^\infty \lambda_n \, Z(d, n) \, P_n^{(d)}(\PT{x} \cdot \PT{y}),
\end{equation}
where
\begin{equation*}
\lambda_n \asymp n^{-2s} \quad \text{as $n \to \infty$} \qquad \text{and} \quad s = \beta - \frac{1}{2} + \frac{d}{2}.
\end{equation*}
Thus, the reproducing kernel Hilbert space $\mathcal{H}_\beta( \mathcal{K}_\beta, \mathbb{S}^d)$ uniquely defined by the positive definite kernel \eqref{eq:kernel} coincides with the Sobolev space $\mathbb{H}^s(\mathbb{S}^d)$ with smoothness $s = \beta-1/2 + d/2$ and the corresponding norms $\|\cdot\|_{d,s}$ and $\|\cdot\|_{\mathcal{K}_\beta}$ are equivalent.
\end{thm}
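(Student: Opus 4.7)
\emph{Plan.} The proof reduces to computing the Laplace--Fourier coefficients of $\mathcal{K}_\beta$ and then establishing the sharp decay rate $\lambda_n \asymp n^{-2s}$. First, because $\dd \sigma_d$ is rotation invariant and the integrand in \eqref{eq:kernel} depends only on $\PT{x} \cdot \PT{z}$, $\PT{y} \cdot \PT{z}$, and $t$, the kernel $\mathcal{K}_\beta$ is zonal; Schoenberg's representation then forces an expansion of the form \eqref{eq:identification.kernel.expansion} with $\lambda_n \geq 0$. To read off $\lambda_n$, I would fix $t \in [-1,1]$ and apply the Funk--Hecke formula to the profile $f_t(u) \DEF (u-t)_+^{\beta - 1}$:
\begin{equation*}
\int_{\mathbb{S}^d} f_t(\PT{x} \cdot \PT{z}) \, \harmY_{n,k}^{(d)}(\PT{z}) \, \dd \sigma_d(\PT{z}) = \widehat{f_t}(n) \, \harmY_{n,k}^{(d)}(\PT{x}), \qquad \widehat{f_t}(n) = \frac{\omega_{d-1}}{\omega_d} \int_t^1 (u - t)^{\beta - 1} P_n^{(d)}(u) \, (1 - u^2)^{d/2 - 1} \dd u.
\end{equation*}
Expanding $f_t(\PT{x} \cdot \PT{z})$ and $f_t(\PT{y} \cdot \PT{z})$ in $\PT{z}$-spherical harmonics, applying Parseval in $\PT{z}$, and exchanging the resulting sum with the integration in $t$ (justified by Tonelli on non-negative squared terms) yields
\begin{equation*}
\lambda_n = \int_{-1}^1 \bigl[\widehat{f_t}(n)\bigr]^2 \dd t.
\end{equation*}

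The heart of the argument is the asymptotic $\lambda_n \asymp n^{-(2\beta - 1 + d)}$. After the change of variables $u = \cos\theta$, $t = \cos\varphi$, one has $\widehat{f_t}(n) = \frac{\omega_{d-1}}{\omega_d} \int_0^\varphi (\cos\theta - \cos\varphi)^{\beta - 1} P_n^{(d)}(\cos\theta) (\sin\theta)^{d - 1} \dd \theta$. On compacta of $(0,\pi)$ the classical Szeg\H{o}/Darboux asymptotic reads
\begin{equation*}
P_n^{(d)}(\cos\theta) = c_d \, n^{-(d-1)/2} (\sin\theta)^{-(d-1)/2} \cos(N\theta + \gamma) + O\bigl(n^{-(d+1)/2}\bigr), \qquad N \DEF n + \tfrac{d-1}{2}.
\end{equation*}
Combined with the local behaviour $(\cos\theta - \cos\varphi)^{\beta-1} \sim (\sin\varphi)^{\beta-1}(\varphi - \theta)^{\beta-1}$ near $\theta = \varphi$, this is an oscillatory integral with an algebraic endpoint singularity of order $\beta-1$, and standard endpoint analysis (for instance via Erd\'elyi's lemma) gives
\begin{equation*}
\widehat{f_t}(n) = \kappa_{d,\beta}\, n^{-\beta - (d-1)/2} (\sin\varphi)^{\beta + (d-3)/2} \cos(N\varphi + \gamma') + o\bigl(n^{-\beta - (d-1)/2}\bigr)
\end{equation*}
uniformly on compact subsets of $\varphi \in (0,\pi)$. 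Squaring, converting to the $\varphi$-integral via $\dd t = \sin\varphi \, \dd \varphi$, averaging the $\cos^2$-factor to $\tfrac{1}{2}$, and noting that the residual weight $(\sin\varphi)^{2\beta + d - 2}$ is integrable on $(0,\pi)$ precisely because $\beta > 1/2$ yields the two-sided bound $\lambda_n \asymp n^{-(2\beta - 1 + d)} = n^{-2s}$.

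The main obstacle is to secure the sharp, two-sided asymptotic rather than a mere upper estimate, and in particular to control the degenerate regions $\varphi \in \{0, \pi\}$ where Szeg\H{o}'s expansion breaks down. There one can either invoke a uniform Hilb-type formula (replacing the cosine by a Bessel function near the endpoints) or, alternatively, integrate by parts using the Rodrigues representation of $P_n^{(d)}$ to show that these boundary neighbourhoods contribute only at subleading order. Positivity of every $\lambda_n$ is also needed and follows because the Abel-type transform $t \mapsto \int_t^1 (u-t)^{\beta-1} P_n^{(d)}(u)(1-u^2)^{d/2-1} \dd u$ cannot vanish identically. Once $\lambda_n \asymp n^{-2s}$ is established, the conclusion is immediate: the reproducing kernel Hilbert space $\mathcal{H}_\beta$ is determined by its kernel and hence by the order of its Gegenbauer coefficients, so comparison with \eqref{eq:assumption} and \eqref{hs_inner_product} identifies $\mathcal{H}_\beta$ as $\mathbb{H}^s(\mathbb{S}^d)$ with the norms $\|\cdot\|_{\mathcal{K}_\beta}$ and $\|\cdot\|_{d,s}$ equivalent.
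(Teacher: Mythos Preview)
Your approach is correct in principle and genuinely different from the paper's. The identity $\lambda_n = \int_{-1}^1 [\widehat{f_t}(n)]^2 \dd t$ obtained via Funk--Hecke and Parseval is a clean starting point (and, incidentally, gives $\lambda_n \geq 0$ without appealing to Schoenberg). The paper, by contrast, never writes $\lambda_n$ in this form. Instead it derives an explicit closed-form splitting $\mathcal{K}_\beta(\PT{x},\PT{y}) = \mathcal{Q}_{\beta-1}(\PT{x},\PT{y}) + (-1)^{L+1} c_d(\beta)\,\|\PT{x}-\PT{y}\|^{2\beta-1}$ (Theorems~\ref{thm:integral.general.case} and~\ref{thm:integral.exceptional.cases}), where the distance-power term has well-known Gegenbauer coefficients of order $n^{-2s}$ and the remainder $\mathcal{Q}_{\beta-1}$ is shown to have coefficients of order $n^{-4s}$. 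The asymptotic $\lambda_n \asymp n^{-2s}$ then follows by comparison, and the proof of Theorem~\ref{thm:identification.thm} itself is only a few lines.

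What each route buys: the paper's decomposition requires substantial hypergeometric machinery (Appell and Kamp\'e de F\'eriet functions, several transformation identities), but in return it yields explicit closed forms for the kernel that are used elsewhere, notably in Proposition~\ref{prop:case.beta.EQ.integer} and the generalized Stolarsky principle. Your oscillatory-integral approach is conceptually lighter and aims only at the asymptotic; the cost is the endpoint analysis you flag, where the Szeg\H{o} expansion fails and one must invoke Hilb-type uniform asymptotics or a Rodrigues/integration-by-parts argument to show the boundary neighbourhoods $\varphi \to 0,\pi$ contribute only at subleading order. That step is standard but not entirely routine, and your sketch would need to be made quantitative there (in particular, uniform control of the $o(\cdot)$ term so that the $\cos^2$ average genuinely delivers the lower bound). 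Your remark on strict positivity of each $\lambda_n$ is also needed, since equivalence of norms requires that no frequency component be annihilated; the non-identically-vanishing argument you give can be made rigorous but deserves a sentence more.
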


Following \cite{BrDi2011arXiv}, we derive a generalization of Stolarsky's invariance principle in the following.

\subsection{The Worst-Case Error of Numerical Integration}

Let $d \geq 2$ and $\beta > 1/2$. In the following we assume that the weights $\lambda_0, \lambda_1, \dots$ are defined by the expansion \eqref{eq:identification.kernel.expansion}.

The {\em worst-case error} of the equal-weight numerical integration rule
\begin{equation*}
\numint_N[f] \DEF \frac{1}{N} \sum_{j=1}^{N} f({\PT x}_{j}), \qquad \PT{x}_1, \dots, \PT{x}_N \in \mathbb{S}^d, 
\end{equation*}
for functions $f$ in the unit ball in the Sobolev space $\mathcal{H}_\beta$ is defined as
\begin{equation*}
\wce( \mathcal{H}_\beta, \numint_N ) \DEF \sup \left\{ \left| \frac{1}{N} \sum_{j=1}^{N} f({\PT x}_{j}) - \int_{\mathbb{S}^d} f( \PT{x} ) \, \dd \sigma_d( \PT{x} ) \right| : f \in \mathcal{H}_\beta, \| f \|_{\mathcal{K}_\beta} \leq 1 \right\}.
\end{equation*}

Invoking the integral representation \eqref{eq:kernel} of the reproducing kernel $\mathcal{K}_\beta$ we have
\begin{equation*}
\frac{1}{N} \sum_{j=1}^N \mathcal{K}_\beta( \PT{x}, \PT{x}_j ) = \int_{-1}^1 \int_{\mathbb{S}^d} \left( \PT{x} \cdot \PT{z} - t \right)_+^{\beta - 1} \left[ \frac{1}{N} \sum_{j=1}^N \left( \PT{x}_j \cdot \PT{z} - t \right)_+^{\beta - 1} \right] \dd \sigma_d( \PT{z} ) \dd t
\end{equation*}
and 
\begin{equation*}
\int_{\mathbb{S}^d} \mathcal{K}_\beta( \PT{x}, \PT{y} ) \, \dd \sigma_d( \PT{y} ) = \int_{-1}^1 \int_{\mathbb{S}^d} \left( \PT{x} \cdot \PT{z} - t \right)_+^{\beta - 1} \left[ \int_{\mathbb{S}^d}  \left( \PT{y} \cdot \PT{z} - t \right)_+^{\beta - 1} \, \dd \sigma_d( \PT{y} ) \right] \dd \sigma_d( \PT{z} ) \dd t.
\end{equation*}
Therefore, the ``representer'' of the error of numerical integration, 
\begin{align*}
& \mathcal{R}( \mathcal{H}_\beta, \numint_N; \PT{x} )_{\mathcal{K}_\beta} \\ & = \int_{-1}^1 \int_{\mathbb{S}^d} \left( \PT{x} \cdot \PT{z} - t \right)_+^{\beta - 1} \left[ \frac{1}{N} \sum_{j=1}^N \left( \PT{x}_j \cdot \PT{z} - t \right)_+^{\beta - 1} - \int_{\mathbb{S}^d}  \left( \PT{y} \cdot \PT{z} - t \right)_+^{\beta - 1} \, \dd \sigma_d( \PT{y} ) \right] \dd \sigma_d( \PT{z} ) \dd t,
\end{align*}
is of the type of function given in \eqref{eq:integral.representation.f.i}. The function is called representer since
\begin{equation}\label{eq_representer}
\frac{1}{N} \sum_{j=1}^N f(\PT{x}_j) - \int_{\mathbb{S}^d} f(\PT{x}) \,\mathrm{d} \sigma_d(\PT{x}) = \langle f, \mathcal{R}( \mathcal{H}_\beta, \numint_N; \PT{x} )_{\mathcal{K}_\beta}  \rangle_{\mathcal{K}_\beta}.
\end{equation}

It is well known that for $f \in \mathcal{H}_\beta$ the following Koksma-Hlawka type inequality holds
\begin{equation*}
\left| \frac{1}{N} \sum_{j=1}^{N} f({\PT x}_{j}) - \int_{\mathbb{S}^d} f( \PT{x} ) \, \dd \sigma_d( \PT{x} ) \right| = \left| ( f, \mathcal{R}( \mathcal{H}_\beta, \numint_N; \cdot )_{\mathcal{K}_\beta} )_{\mathcal{K}_\beta} \right| \leq \left\| f \right\|_{\mathcal{K}_\beta} \left\| \mathcal{R}( \mathcal{H}_\beta, \numint_N; \cdot )_{\mathcal{K}_\beta} \right\|_{\mathcal{K}_\beta},
\end{equation*}
where equality is assumed for the ``representer'' of the error of numerical integration of the rule $\numint_N$ for functions in $\mathcal{H}_\beta$.
See \cite{Hi98} or \cite[Chapter~2]{DP10} for numerical integration in reproducing kernel Hilbert spaces.

It follows that
\begin{align}
\left[ \wce( \mathcal{H}_\beta, \numint_N ) \right]^2 
&= \left\| \mathcal{R}( \mathcal{H}_\beta, \numint_N; \cdot )_{\mathcal{K}_\beta} \right\|_{\mathcal{K}_\beta}^2 \label{eq:wce.id.1} \\
&= \int_{\mathbb{S}^d} \int_{\mathbb{S}^d} \mathcal{K}_\beta( \PT{x}, \PT{y} ) \, \dd \sigma_d( \PT{x} ) \dd \sigma_d( \PT{y} ) - \frac{2}{N} \sum_{j=1}^N \int_{\mathbb{S}^d} \mathcal{K}_\beta( \PT{x}, \PT{x}_j ) \, \dd \sigma_d( \PT{x} ) \notag \\
&\phantom{=}+ \frac{1}{N^2} \sum_{j=1}^N \sum_{k=1}^N \mathcal{K}_\beta( \PT{x}_j, \PT{x}_k ). \notag
\end{align}
Since the reproducing kernel $\mathcal{K}_\beta( \PT{x}, \PT{y} )$ is a zonal function (that is, only depends on $\PT{x} \cdot \PT{y}$), see Theorems~\ref{thm:integral.general.case} and \ref{thm:integral.exceptional.cases}, the integral $\int_{\mathbb{S}^d} \mathcal{K}_\beta( \PT{x}, \PT{y} ) \, \dd \sigma_d( \PT{x} )$ does not depend on $\PT{y} \in \mathbb{S}^d$. Hence
\begin{equation} \label{eq:wce-A}
\left[ \wce( \mathcal{H}_\beta, \numint_N ) \right]^2 = \frac{1}{N^2} \sum_{j=1}^N \sum_{k=1}^N \mathcal{K}_\beta( \PT{x}_j, \PT{x}_k ) - \int_{\mathbb{S}^d} \int_{\mathbb{S}^d} \mathcal{K}_\beta( \PT{x}, \PT{y} ) \, \dd \sigma_d( \PT{x} ) \dd \sigma_d( \PT{y} ).
\end{equation}

\subsection{Generalizing Stolarsky's Invariance Principle}

We define local discrepancy function with coefficient $\beta$ of the point set $\mathcal{P}_N = \{\PT{x}_1,\ldots, \PT{x}_N\}$ by
\begin{equation*}
\Delta_{\mathcal{P}_N, \beta}(\PT{z}, t) = \frac{1}{N} \sum_{j=1}^N (\PT{x}_j \cdot \PT{z} - t)_+^{\beta-1} - \int_{\mathbb{S}^d} (\PT{y} \cdot \PT{z} - t)_+^{\beta-1} \,\mathrm{d} \sigma_d(\PT{y}).
\end{equation*}
For $\beta = 1$ we have
\begin{equation*}
(\PT{x} \cdot \PT{z} - t)_+^0 = 1_{C(\PT{z};t)}(\PT{x}) = \left\{\begin{array}{ll} 1 & \mbox{if } \PT{x} \in C(\PT{z};t), \\ 0 & \mbox{otherwise}. \end{array} \right.
\end{equation*}
Thus, the local discrepancy with coefficient $\beta = 1$ is just the local spherical cap discrepancy.

Note that the integral appearing in the definition of the local discrepancy above does not depend on $\PT{z}$ and $t$ and can be expressed in terms of a Gauss hypergeometric function. (For $\beta = 1$ it is simply $\sigma_d( C( \PT{z}, t) )$ as follows from \eqref{eq:generalizing.indicator.fcn}.)
\begin{prop} \label{prop:inner-most.int}
Let $d \geq 2$ and $\beta > 0$. Then
\begin{equation*}
\int_{\mathbb{S}^d}  \left( \PT{y} \cdot \PT{z} - t \right)_+^{\beta - 1} \, \dd \sigma_d( \PT{y} ) = 2^{d/2-1} \frac{\gammafcn((d+1)/2) \gammafcn(\beta)}{\sqrt{\pi} \, \gammafcn(\beta + d/2)} \left( 1 - t \right)^{\beta + d/2 - 1} \Hypergeom{2}{1}{1-d/2,d/2}{\beta+d/2}{\frac{1-t}{2}}.
\end{equation*}
\end{prop}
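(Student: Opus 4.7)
The plan is to exploit zonality: since the integrand depends on $\PT{y}$ only through $\PT{y}\cdot\PT{z}$, the slice/coarea formula for the sphere converts the surface integral to a one–dimensional integral in $u=\PT{y}\cdot\PT{z}$ against the weight $(1-u^2)^{d/2-1}$. Using the standard identity $\omega_{d-1}/\omega_d=\gammafcn((d+1)/2)/(\sqrt\pi\,\gammafcn(d/2))$, I rewrite
\begin{equation*}
\int_{\mathbb{S}^d}(\PT{y}\cdot\PT{z}-t)_+^{\beta-1}\,\dd\sigma_d(\PT{y})
=\frac{\gammafcn((d+1)/2)}{\sqrt\pi\,\gammafcn(d/2)}\int_t^1(u-t)^{\beta-1}(1-u^2)^{d/2-1}\,\dd u,
\end{equation*}
where the truncation $(\cdot)_+$ has collapsed into the restriction $u\geq t$.

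Next I would normalize the interval by the substitution $u=1-(1-t)v$, so that $u-t=(1-t)(1-v)$, $1-u=(1-t)v$, and $1+u=2-(1-t)v$. Factoring $(1-u^2)=(1-u)(1+u)$, the Jacobian and the power $(1-t)^{\beta-1}$ combine to pull out a clean prefactor $(1-t)^{\beta+d/2-1}$, after which a further extraction of $2^{d/2-1}$ from the factor $[2-(1-t)v]^{d/2-1}$ brings the remaining integral into the form
\begin{equation*}
\int_0^1 v^{d/2-1}(1-v)^{\beta-1}\Bigl[1-\tfrac{1-t}{2}\,v\Bigr]^{d/2-1}\dd v.
\end{equation*}

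I now recognize this as Euler's integral representation
\begin{equation*}
\Hypergeom{2}{1}{a,b}{c}{z}=\frac{\gammafcn(c)}{\gammafcn(b)\gammafcn(c-b)}\int_0^1 v^{b-1}(1-v)^{c-b-1}(1-vz)^{-a}\,\dd v
\end{equation*}
with $a=1-d/2$, $b=d/2$, $c=\beta+d/2$, and $z=(1-t)/2$; the hypotheses $\beta>0$ and $d\geq 2$ ensure $\re(c)>\re(b)>0$, so the formula applies. Assembling the three constant factors—the zonality constant, the extracted $2^{d/2-1}(1-t)^{\beta+d/2-1}$, and the Beta factor $\gammafcn(d/2)\gammafcn(\beta)/\gammafcn(\beta+d/2)$—cancels the stray $\gammafcn(d/2)$ and yields the stated identity.

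The only real bookkeeping hazard is the substitution step: tracking the signs from the orientation change and matching all four parameters $a,b,c,z$ simultaneously to the Euler integral. Once those match, the rest is algebraic simplification of Gamma prefactors; nothing deeper than Euler's integral and the standard reduction of a zonal integral on $\mathbb{S}^d$ is required.
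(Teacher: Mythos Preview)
Your proof is correct and follows essentially the same approach as the paper: reduce to a one-dimensional integral via zonality (Funk--Hecke), then recognize an Euler integral for ${}_2F_1$. Your substitution $u=1-(1-t)v$ is in fact slightly slicker than the paper's $u=t+(1-t)v$: it lands directly on the argument $\tfrac{1-t}{2}$, whereas the paper first obtains ${}_2F_1\bigl(1-d/2,\beta;\beta+d/2;-\tfrac{1-t}{1+t}\bigr)$ and then needs one extra linear transformation (Pfaff) to reach the stated form.
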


Using the local discrepancy function, the representer of numerical integration can be written as
\begin{equation*}
\mathcal{R}( \mathcal{H}_\beta, \numint_N; \PT{x} )_{\mathcal{K}_\beta}  = \int_{-1}^1 \int_{\mathbb{S}^d} \left( \PT{x} \cdot \PT{z} - t \right)_+^{\beta - 1} \Delta_{\mathcal{P}_N, \beta}(\PT{z},t) \dd \sigma_d( \PT{z} ) \dd t.
\end{equation*}
Hence, using the inner product representation of the worst-case error given in \eqref{eq:wce.id.1} and the inner product formula in \eqref{eq:inner.product.1}, we arrive at
\begin{equation} \label{eq:wce-B}
\wce( \mathcal{H}_\beta, \numint_N )  = \left\|  \mathcal{R}( \mathcal{H}_\beta, \numint_N; \PT{x} )_{\mathcal{K}_\beta} \right\|_{\mathcal{K}_\beta} = \left\{ \int_{-1}^1 \int_{\mathbb{S}^d} \left| \Delta_{\mathcal{P}_N, \beta}(\PT{z},t) \right|^2 \dd \sigma_d( \PT{z} ) \dd t \right\}^{1/2}.
\end{equation}

The right-hand side of \eqref{eq:wce-B} gives rise to the definition of a generalized discrepancy which for $\beta = 1$ reduces to the {\em spherical cap $\mathbb{L}_2$-discrepancy} 
\begin{equation*}
D_{\mathbb{L}_{2}}^{C}(\mathcal{P}_{N}) \DEF \left\{\int_{-1}^{1} \int_{\mathbb{S}^{d}} \left| \frac{| \mathcal{P}_{N} \cap C({\PT x};t)|}{N} - \sigma_d( C({\PT x};t) ) \right|^{2} \dd{\sigma_d}({\PT x}) \,\dd{t} \right\}^{1/2}.
\end{equation*}
As for the spherical cap $\mathbb{L}_2$-discrepancy, one could also study the $L_p$-norm analogue
\begin{equation}\label{wce_pnorm}
\left\{ \int_{-1}^1 \int_{\mathbb{S}^d} |\Delta_{\mathcal{P}_N,\beta}(\PT{z},t)|^p \,\mathrm{d} \sigma_d(\PT{z}) \,\mathrm{d} t \right\}^{1/p},
\end{equation}
with the obvious modifications for $p = \infty$. Note that this expression is defined for all $1 \le p \le \infty$ if $\beta \ge 1$ and all $1 \le p < (1-\beta)^{-1}$ for $0 < \beta < 1$. Let $1 \le q \le \infty$ with $1/p + 1/q =1$. Equation~\eqref{wce_pnorm} is the worst-case integration error for functions with finite norm
\begin{equation*}
\|f\|_{\mathcal{K}_\beta, q} \DEF \left\{ \int_{-1}^1 \int_{\mathbb{S}^d} \left| g(\PT{z},t) \right|^q \,\mathrm{d} \sigma_d(\PT{z}) \,\mathrm{d} t \right\}^{1/q},
\end{equation*}
with the obvious modifications for $q = \infty$. Here, $g$ is the potential function of $f$. This result can be obtained by applying H\"older's inequality to \eqref{eq_representer}.

The combination of the relations \eqref{eq:wce-A} and \eqref{eq:wce-B} gives the generalized invariance principle.
\begin{thm} \label{thm:gen.Stolarsky}
Let $d \geq 2$ and $\beta > 1/2$. Then for any $N$-points $\PT{x}_1, \dots, \PT{x}_N$ on $\mathbb{S}^d$ one has
\begin{equation}
\begin{split} \label{eq:gen.Stolarsky}
&\int_{-1}^1 \int_{\mathbb{S}^d} \left| \frac{1}{N} \sum_{j=1}^N \left( \PT{x}_j \cdot \PT{z} - t \right)_+^{\beta - 1} - \int_{\mathbb{S}^d}  \left( \PT{y} \cdot \PT{z} - t \right)_+^{\beta - 1} \, \dd \sigma_d( \PT{y} ) \right|^2 \dd \sigma_d( \PT{z} ) \dd t \\ 
&\phantom{equals}= \frac{1}{N^2} \sum_{j=1}^N \sum_{k=1}^N \mathcal{K}_\beta( \PT{x}_j, \PT{x}_k ) - \int_{\mathbb{S}^d} \int_{\mathbb{S}^d} \mathcal{K}_\beta( \PT{x}, \PT{y} ) \, \dd \sigma_d( \PT{x} ) \dd \sigma_d( \PT{y} ).
\end{split}
\end{equation}
\end{thm}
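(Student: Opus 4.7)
The plan is to observe that the theorem is essentially an equality of two different expressions for the squared worst-case integration error $[\wce(\mathcal{H}_\beta, \numint_N)]^2$, and that both expressions have already been derived in the preceding discussion. The bulk of the work is therefore to string together the identities \eqref{eq:wce-A} and \eqref{eq:wce-B} and to verify the one analytic ingredient used along the way, namely the vanishing of the cross-term on the right-hand side.

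First I would start from the reproducing-kernel computation of $\lVert \mathcal{R}(\mathcal{H}_\beta,\numint_N;\cdot)_{\mathcal{K}_\beta}\rVert_{\mathcal{K}_\beta}^2$ that appears just above \eqref{eq:wce-A}. Expanding the squared norm yields three terms: the pairwise kernel sum $N^{-2}\sum_{j,k}\mathcal{K}_\beta(\PT{x}_j,\PT{x}_k)$, the double-integral $\int\int \mathcal{K}_\beta(\PT{x},\PT{y})\,\dd\sigma_d(\PT{x})\dd\sigma_d(\PT{y})$, and a cross-term $-\tfrac{2}{N}\sum_j \int_{\mathbb{S}^d}\mathcal{K}_\beta(\PT{x},\PT{x}_j)\,\dd\sigma_d(\PT{x})$. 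The cross-term is handled by the zonality of $\mathcal{K}_\beta$, which is guaranteed by Theorems~\ref{thm:integral.general.case} and~\ref{thm:integral.exceptional.cases}: since $\mathcal{K}_\beta(\PT{x},\PT{y})$ depends only on $\PT{x}\cdot\PT{y}$, the integral $\int \mathcal{K}_\beta(\PT{x},\PT{y})\,\dd\sigma_d(\PT{x})$ is a constant independent of $\PT{y}$, equal to the double integral. Substituting this collapses the cross-term with the double-integral term and yields \eqref{eq:wce-A}.

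Next, I would compute the same squared norm a second way using the integral representation of $\mathcal{R}(\mathcal{H}_\beta,\numint_N;\PT{x})_{\mathcal{K}_\beta}$ in terms of the potential function. By the definition of the local discrepancy $\Delta_{\mathcal{P}_N,\beta}(\PT{z},t)$ and the formulas for $N^{-1}\sum_j \mathcal{K}_\beta(\PT{x},\PT{x}_j)$ and $\int \mathcal{K}_\beta(\PT{x},\PT{y})\,\dd\sigma_d(\PT{y})$ given just before \eqref{eq:wce-B}, the representer is precisely an element of $\mathcal{H}_\beta$ of the form \eqref{eq:integral.representation.f.i} with potential function $g(\PT{z},t)=\Delta_{\mathcal{P}_N,\beta}(\PT{z},t)$. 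Applying the inner-product formula \eqref{eq:inner.product.1} to the squared norm of this representer then gives \eqref{eq:wce-B}, i.e.\ the left-hand side of \eqref{eq:gen.Stolarsky}. Equating the two expressions for $[\wce(\mathcal{H}_\beta,\numint_N)]^2$ yields the theorem.

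The main obstacle, such as it is, is conceptual rather than technical: one has to verify that $\Delta_{\mathcal{P}_N,\beta}$ is indeed the potential function of the representer, so that \eqref{eq:inner.product.1} applies. This amounts to checking that $\Delta_{\mathcal{P}_N,\beta}\in\mathbb{L}_2(\mathbb{S}^d\times[-1,1],\sigma_d\otimes\dd t)$, which follows for $\beta>1/2$ from the finiteness of $\mathcal{K}_\beta(\PT{x},\PT{x})$ established after \eqref{eq:kernel}. All other steps are essentially bookkeeping that has been laid out in the paragraphs leading up to the statement.
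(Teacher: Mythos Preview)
Your proposal is correct and follows exactly the paper's approach: the theorem is stated immediately after the sentence ``The combination of the relations \eqref{eq:wce-A} and \eqref{eq:wce-B} gives the generalized invariance principle,'' and your write-up simply spells out the derivations of those two relations and their combination. The additional care you take in noting that $\Delta_{\mathcal{P}_N,\beta}\in\mathbb{L}_2(\mathbb{S}^d\times[-1,1])$ (via finiteness of $\mathcal{K}_\beta(\PT{x},\PT{x})$) is a detail the paper leaves implicit but is the right thing to check.
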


The double integral over the kernel $\mathcal{K}_\beta$ in \eqref{eq:gen.Stolarsky} can be reduced as follows. In the following let ${_p}\HyperF_q$ denote the generalized hypergeometric function and $(a)_n$ the Pochhammer symbol.

\begin{prop} \label{prop:double.int.kernel}
Let $d \geq 2$ and $\beta > 1/2$. Then 
\begin{equation*}
\begin{split}
&\int_{\mathbb{S}^d} \int_{\mathbb{S}^d} \mathcal{K}_\beta( \PT{x}, \PT{y} ) \, \dd \sigma_d( \PT{x} ) \dd \sigma_d( \PT{y} ) \\
&\phantom{equals}= 2^{d-2} \left[ 2^{\beta-1/2 + d/2} \frac{\gammafcn((d+1)/2) \gammafcn(\beta)}{\sqrt{\pi} \, \gammafcn(\beta + d/2)} \right]^2 \int_{0}^1 x^{2\beta + d - 2} \left[ \Hypergeom{2}{1}{1-d/2,d/2}{\beta+d/2}{x} \right]^2 \dd x.
\end{split}
\end{equation*}

For $d=2$ one has
\begin{equation*}
\int_{\mathbb{S}^2} \int_{\mathbb{S}^2} \mathcal{K}_\beta( \PT{x}, \PT{y} ) \, \dd \sigma_2( \PT{x} ) \dd \sigma_2( \PT{y} ) = \frac{1}{\beta} \frac{2^{2\beta}}{2\beta \left( 2\beta+1 \right)}.
\end{equation*}

In general, for $d \geq 2$ one has
\begin{equation*}
\begin{split}
\int_{\mathbb{S}^d} \int_{\mathbb{S}^d} \mathcal{K}_\beta( \PT{x}, \PT{y} ) \, \dd \sigma_d( \PT{x} ) \dd \sigma_d( \PT{y} ) 
&= \frac{2^{d-2}}{2\beta+d-1}  \left[ 2^{\beta-1/2 + d/2} \frac{\gammafcn((d+1)/2) \gammafcn(\beta)}{\sqrt{\pi} \, \gammafcn(\beta + d/2)} \right]^2 \\
&\phantom{=\pm}\times \KampedeFerietA{1:1;1}{1:2;2}{\displaystyle 2\beta + d - 1 \\ \displaystyle 2\beta + d}{\displaystyle 1 - d/2, d/2 \\ \displaystyle \beta + d/2}{\displaystyle 1 - d/2, d/2 \\ \displaystyle \beta + d/2}{1,1}.
\end{split} 
\end{equation*}
For even dimension $d$ the Kamp{\'e} de F{\'e}riet function terminates after finitely many terms.
\end{prop}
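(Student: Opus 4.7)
The starting point is to apply Fubini's theorem to the definition of $\mathcal{K}_\beta$ in \eqref{eq:kernel}, moving the $(\PT{z},t)$-integration outside:
\begin{equation*}
\int_{\mathbb{S}^d} \int_{\mathbb{S}^d} \mathcal{K}_\beta( \PT{x}, \PT{y} ) \, \dd \sigma_d( \PT{x} ) \dd \sigma_d( \PT{y} ) = \int_{-1}^1 \int_{\mathbb{S}^d} \left[ \int_{\mathbb{S}^d} \left( \PT{x} \cdot \PT{z} - t \right)_+^{\beta - 1} \dd \sigma_d(\PT{x}) \right]^2 \dd \sigma_d(\PT{z}) \dd t.
\end{equation*}
By Proposition~\ref{prop:inner-most.int} the bracketed inner integral is independent of $\PT{z}$ and equals an explicit constant times $(1-t)^{\beta+d/2-1}\, _2F_1(1-d/2,d/2;\beta+d/2;(1-t)/2)$, so the $\sigma_d(\PT{z})$-integral is trivial. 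After the substitution $x = (1-t)/2$, which gives $\dd t = -2\dd x$ and $(1-t)^{2\beta+d-2} = 2^{2\beta+d-2}x^{2\beta+d-2}$, a routine combination of the arising powers of $2$ with the prefactor $[2^{d/2-1}(\cdots)]^2$ from Proposition~\ref{prop:inner-most.int} yields the claimed single-integral representation.

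For $d=2$ the hypergeometric parameter $1-d/2$ vanishes, and since $(0)_m = 0$ for every $m\geq 1$, the series $_2F_1(0,1;\beta+1;x)$ reduces to its constant term $1$. The integral therefore collapses to $\int_0^1 x^{2\beta}\dd x = 1/(2\beta+1)$, and I would then use the duplication/recursion $\gammafcn(\beta+1) = \beta\gammafcn(\beta)$ together with $\gammafcn(3/2) = \sqrt{\pi}/2$ to simplify the gamma factors into the claimed expression $\frac{1}{\beta}\,\frac{2^{2\beta}}{2\beta(2\beta+1)}$.

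For general $d \geq 2$ I would expand the squared hypergeometric series termwise as
\begin{equation*}
\left[\Hypergeom{2}{1}{1-d/2,d/2}{\beta+d/2}{x}\right]^2 = \sum_{m,n\geq 0} \frac{(1-d/2)_m (d/2)_m}{(\beta+d/2)_m\, m!}\cdot\frac{(1-d/2)_n (d/2)_n}{(\beta+d/2)_n\, n!}\,x^{m+n},
\end{equation*}
interchange summation and the $\int_0^1 x^{2\beta+d-2}(\cdot)\,\dd x$ integration (justified by absolute convergence on $[0,1)$ together with the standard estimate $(1-x)_+^{\beta+d/2-1}$-type behaviour near $x=1$), and use the elementary identity
\begin{equation*}
\frac{1}{2\beta+d-1+m+n} = \frac{1}{2\beta+d-1}\cdot\frac{(2\beta+d-1)_{m+n}}{(2\beta+d)_{m+n}}
\end{equation*}
to insert Pochhammer symbols depending on $m+n$. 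The resulting double sum matches exactly the Kampé de Fériet function displayed in the statement, with the global factor $1/(2\beta+d-1)$ absorbed in front.

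Finally, for even $d$ the parameter $1-d/2$ is a non-positive integer, so $(1-d/2)_m$ vanishes for $m > d/2-1$; the same happens in the $n$-direction. Hence both inner sums truncate and the Kampé de Fériet series becomes a finite sum. The main bookkeeping obstacle is tracking the powers of $2$ and the gamma-function prefactors through the substitution $x=(1-t)/2$ and verifying that the $d=2$ specialization really collapses to the compact closed form; everything else is a mechanical rewrite of the squared $_2F_1$ as a Kampé de Fériet function.
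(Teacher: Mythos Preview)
Your proposal is correct and follows essentially the same route as the paper: Fubini on the defining integral \eqref{eq:kernel}, Proposition~\ref{prop:inner-most.int} to evaluate the inner integral, the substitution $2x=1-t$, and then termwise expansion of the squared ${}_2F_1$ to recognise the Kamp\'e de F\'eriet structure. The only cosmetic difference is that the paper packages the last step as a separate auxiliary result (Proposition~\ref{prop:integral.identity}, which computes $\int_0^1 t^{\beta-1}(1-t)^{\gamma-\beta-1}\,{}_2F_1\cdot{}_2F_1\,\dd t$ in general), whereas you carry out the same series manipulation inline; your Pochhammer identity $\tfrac{1}{2\beta+d-1+m+n}=\tfrac{1}{2\beta+d-1}\tfrac{(2\beta+d-1)_{m+n}}{(2\beta+d)_{m+n}}$ is exactly the content of that proposition's proof specialised to $\gamma-\beta=1$.
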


If $\beta = M$, where $M$ is a positive integer, the reproducing kernel $\mathcal{K}_\beta$ takes on a particularly simple form suitable for numerical computations. We recall that this kernel is the reproducing kernel for the Sobolev space $\mathbb{H}^s( \mathbb{S}^d )$ with smoothness $s = M + ( d - 1 ) / 2$.

\begin{prop} \label{prop:case.beta.EQ.integer}
Let $d \geq 2$ and suppose that $\beta$ is a positive integer $M$. Then
\begin{equation*}
\mathcal{K}_M( \PT{x}, \PT{y} ) = \mathcal{Q}_{M-1}( \PT{x}, \PT{y} ) + (-1)^M c_d(M) \left\| \PT{x} - \PT{y} \right\|^{2M-1}, \qquad \PT{x}, \PT{y} \in \mathbb{S}^d,
\end{equation*}
where $\mathcal{Q}_{M-1}( \PT{x}, \PT{y} )$ is the following polynomial of degree $M-1$ in terms of $\PT{x} \cdot \PT{y}$, 
\begin{equation*}
\mathcal{Q}_{M-1}( \PT{x}, \PT{y} ) 
= \frac{2^{2M-1}}{2M-1} \frac{\Pochhsymb{d/2}{2M-1}}{\Pochhsymb{d}{2M-1}} \Hypergeom{3}{2}{1-M,1/2-M,1-M}{3/2-M,2-d/2-2M}{\frac{1 - \PT{x} \cdot \PT{y}}{2}}
\end{equation*} 
and the positive constant $c_d(M)$ is given by
\begin{equation*}
c_d(M) = 2^{-2M} \frac{\gammafcn( ( d + 1 ) / 2 )}{\sqrt{\pi} \, \gammafcn( d/2 )} \frac{\gammafcn( M ) \gammafcn( M )}{\Pochhsymb{1/2}{M}\Pochhsymb{d/2}{M}}.
\end{equation*}

Moreover,
\begin{align*}
& \int_{\mathbb{S}^d} \int_{\mathbb{S}^d} \mathcal{Q}_{M-1}( \PT{x}, \PT{y} ) \, \dd \sigma_d( \PT{x} ) \dd \sigma_d( \PT{y} ) \\ & = (-1)^{M-1} \frac{(M-1)!}{\Pochhsymb{3/2}{M-1}} \Hypergeom{4}{3}{1-M,1/2-M,M+d-1,d/2}{1-M,(d+1)/2,d}{1}.
\end{align*}
(The $_{4}\HyperF_{3}$-function is a polynomial of degree $M-1$ evaluated at $1$.) For $d = 2$ this relation becomes
\begin{equation*}
\int_{\mathbb{S}^2} \int_{\mathbb{S}^2} \mathcal{Q}_{M-1}( \PT{x}, \PT{y} ) \, \dd \sigma_2( \PT{x} ) \dd \sigma_2( \PT{y} ) = \frac{2^{2M}}{M^2 \left( 2 M + 1 \right)^2} + \frac{(-1)^{M-1}}{M \left( 2 M + 1 \right)^2} \frac{(M-1)!}{\Pochhsymb{1/2}{M}}.
\end{equation*}
\end{prop}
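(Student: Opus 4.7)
The plan is to derive Proposition~\ref{prop:case.beta.EQ.integer} from the general closed-form expression for $\mathcal{K}_\beta(\PT{x}, \PT{y})$ furnished by Theorem~\ref{thm:integral.general.case} by specializing to $\beta = M \in \N$, and then to handle the double integrals via zonality combined with Euler's hypergeometric integral.

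Since $2M - 1$ is odd and hence not an even nonnegative integer, Theorem~\ref{thm:integral.general.case} applies at $\beta = M$ and expresses $\mathcal{K}_M(\PT{x}, \PT{y})$ as a Kamp{\'e} de F{\'e}riet series in $(1 - \PT{x} \cdot \PT{y})/2$ plus a constant times $\|\PT{x} - \PT{y}\|^{2M - 1}$. The Kamp{\'e} de F{\'e}riet series carries an upper parameter $1 - \beta$, which for $\beta = M$ becomes the non-positive integer $1 - M$, so the Pochhammer factor $(1 - M)_k$ annihilates all terms with $k \ge M$; the sum collapses to a polynomial of degree $M - 1$. I would rearrange this truncated series into the $_3F_2$ defining $\mathcal{Q}_{M - 1}$, and reduce the Gamma prefactor in front of $\|\PT{x} - \PT{y}\|^{2M - 1}$ to $(-1)^M c_d(M)$ using the reflection identity $\gammafcn(z)\gammafcn(1 - z) = \pi/\sin(\pi z)$ at a half-integer argument (which produces both the sign $(-1)^M$ and the quotient of Pochhammer symbols appearing in $c_d(M)$), together with Gamma duplication.

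For the general double integral of $\mathcal{Q}_{M - 1}$, zonality reduces the two-fold integral to
$$\int_{\mathbb{S}^d}\int_{\mathbb{S}^d} f(\PT{x} \cdot \PT{y})\,\dd\sigma_d(\PT{x})\,\dd\sigma_d(\PT{y}) = \frac{\omega_{d - 1}}{\omega_d}\int_{-1}^{1} f(u)(1 - u^2)^{d/2 - 1}\,\dd u.$$
The substitution $w = (1 - u)/2$ converts the weight into $2^{d - 1} w^{d/2 - 1}(1 - w)^{d/2 - 1}$ and places the integrand in the form of the $_3F_2$ defining $\mathcal{Q}_{M - 1}$; Euler's identity
$$\int_0^1 w^{a - 1}(1 - w)^{b - 1}\,\HypergeomSMALL{p}{q}{a_1, \dots, a_p}{b_1, \dots, b_q}{w}\,\dd w = \betafcn(a, b)\,\HypergeomSMALL{p + 1}{q + 1}{a_1, \dots, a_p, a}{b_1, \dots, b_q, a + b}{1}$$
with $a = b = d/2$ adds one upper and one lower parameter, yielding the claimed $_4F_3$ at $1$; the $(1 - M)_k$ factor in the upper list is preserved, so the series again terminates at $k = M - 1$. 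For the specific case $d = 2$ I would bypass simplifying the $_4F_3$ directly and instead use linearity:
$$\int_{\mathbb{S}^2}\!\!\int_{\mathbb{S}^2}\mathcal{Q}_{M - 1}\,\dd\sigma_2\,\dd\sigma_2 = \int_{\mathbb{S}^2}\!\!\int_{\mathbb{S}^2}\mathcal{K}_M\,\dd\sigma_2\,\dd\sigma_2 - (-1)^M c_2(M)\int_{\mathbb{S}^2}\!\!\int_{\mathbb{S}^2}\|\PT{x} - \PT{y}\|^{2M - 1}\,\dd\sigma_2\,\dd\sigma_2.$$
The first term on the right is supplied by Proposition~\ref{prop:double.int.kernel}; the second is the elementary integral $2^{2M}/(2M + 1)$, obtained from $\|\PT{x} - \PT{y}\|^2 = 2(1 - \PT{x} \cdot \PT{y})$ and the flat zonal weight at $d = 2$; combining the pieces and substituting the explicit form of $c_2(M)$ yields the stated closed form.

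The main obstacle is the bookkeeping in the first step: tracking the Pochhammer symbols and Gamma prefactors as the Kamp{\'e} de F{\'e}riet double series collapses under $\beta = M$, and in particular correctly isolating the sign $(-1)^M$ and the explicit closed form of $c_d(M)$. Once $\mathcal{Q}_{M - 1}$ and $c_d(M)$ are pinned down, every remaining step is either zonal reduction or a single application of Euler's integral for generalized hypergeometric functions.
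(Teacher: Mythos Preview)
Your overall strategy matches the paper's: specialize Theorem~\ref{thm:integral.general.case} at $\beta=M$, then integrate the resulting $_3\HyperF_2$ polynomial via zonal reduction and Euler's integral. Two remarks.

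\medskip

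\textbf{A missing step in the double-integral computation.} Applying Euler's integral with $a=b=d/2$ to the $_3\HyperF_2$ form of $\mathcal{Q}_{M-1}$ does \emph{not} directly produce the $_4\HyperF_3$ in the statement. It produces
\[
\Hypergeom{4}{3}{1-M,\,1/2-M,\,1-M,\,d/2}{3/2-M,\,2-d/2-2M,\,d}{1},
\]
with prefactor $\dfrac{2^{2M-1}}{2M-1}\dfrac{\Pochhsymb{d/2}{2M-1}}{\Pochhsymb{d}{2M-1}}$. To reach the claimed form
\[
(-1)^{M-1}\frac{(M-1)!}{\Pochhsymb{3/2}{M-1}}\;\Hypergeom{4}{3}{1-M,\,1/2-M,\,M+d-1,\,d/2}{1-M,\,(d+1)/2,\,d}{1}
\]
the paper applies Whipple's transformation for Saalsch\"utzian $_4\HyperF_3$'s (the parameter sum condition is exactly met here), which both swaps the parameter list and throws off a Pochhammer ratio that combines with the old prefactor to give $(-1)^{M-1}(M-1)!/\Pochhsymb{3/2}{M-1}$. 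Without this step you land on a correct identity, just not the one stated.

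\medskip

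\textbf{A cleaner route for $d=2$.} Your linearity argument for the $d=2$ case---write $\int\!\!\int\mathcal{Q}_{M-1}=\int\!\!\int\mathcal{K}_M-(-1)^M c_2(M)\,V_{2M-1}(\mathbb{S}^2)$ and read off both pieces from Proposition~\ref{prop:double.int.kernel} and \eqref{eq:V.lambda.S.d}---is genuinely different from the paper's approach, which simply hands the (post-Whipple) $_4\HyperF_3$ to Mathematica. Your route is more transparent and avoids computer algebra entirely; it also serves as an independent check on the Mathematica output.

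\medskip

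A minor inaccuracy: the Kamp\'e de F\'eriet series in Theorem~\ref{thm:integral.general.case} is a two-variable series in $(1\pm\PT{x}\cdot\PT{y})/2$, not a single-variable series in $(1-\PT{x}\cdot\PT{y})/2$. The paper's proof actually works from the equivalent one-index form \eqref{eq:K.beta.2.second} (a sum of $_2\HyperF_1$'s), expands and resums, and invokes Gauss's $_2\HyperF_1(1)$ evaluation on the inner sum to collapse to the stated $_3\HyperF_2$. This is the ``bookkeeping'' you flag; it is slightly more structured than a generic rearrangement.
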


The polynomial part $\mathcal{Q}_{M-1}$ of the kernel $\mathcal{K}_M$, $M$ a positive integer, gets annihilated in
\begin{equation*}
\frac{1}{N^2} \sum_{j=1}^N \sum_{k=1}^N \mathcal{K}_M( \PT{x}_j, \PT{x}_k ) - \int_{\mathbb{S}^d} \int_{\mathbb{S}^d} \mathcal{K}_M( \PT{x}, \PT{y} ) \, \dd \sigma_d( \PT{x} ) \dd \sigma_d( \PT{y} )
\end{equation*}
if the nodes $\PT{x}_1, \dots, \PT{x}_N$ form a spherical $t$-design with $t \geq M-1$. 

A {\em spherical $t$-design} $\{ \PT{x}_1, \dots, \PT{x}_{N(t)} \} \subseteq \mathbb{S}^d$, introduced in the ground breaking paper \cite{DeGoSei1977} by Delsarte, Goethals and Seidel, is characterized by
\begin{equation*}
\int_{\mathbb{S}^d} P( \PT{x} ) \, \dd \sigma_d( \PT{x} ) = \frac{1}{N(t)} \sum_{j = 1}^{N(t)} P( \PT{x}_j ) \qquad \text{for all polynomials $P$ with degree $\leq t$.}
\end{equation*}
Seymour and Zaslavsky~\cite{SeZa1984} proved that a spherical $t$-design always exists if $N(t)$ is sufficiently large. (From a bound in \cite{DeGoSei1977} it follows that $N(t) \geq c t^d$ for some constant $c > 0$.)

\begin{thm}[{\cite{BrHe2007}, \cite{He2006, HeSl2005b, HeSl2005, HeSl2006}}] \label{thm:bounds.spherical.designs}
Let $s > d/2$. There exists a constant $c(s,d) >0$ depending only on $s$ and $d$ such that for any $N$-point quadrature rule $Q_N$ we have
\begin{equation} \label{eq:asymptotic_in_t.bound1}
\frac{c(s,d)}{N^{s/d}} \leq \mathrm{wce}(\mathbb{H}^s,Q_N) \qquad \mbox{for all } N \ge 1. \end{equation}
Further, there exists a constant $C(s,d)$ depending only on $s$ and $d$ such that
\begin{equation} \label{eq:asymptotic_in_t.bound2}
\mathrm{wce}(\mathbb{H}^s,\widehat{Q}_N) \leq \frac{C(s,d)}{t^s}  \qquad \mbox{for all }  t \ge 1,
\end{equation}
where the quadrature rule $\widehat{Q}_N$ uses an $N(t)$-point spherical $t$-design as quadrature points.
\end{thm}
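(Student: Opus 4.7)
The theorem combines an information-theoretic lower bound with an upper bound specific to spherical $t$-designs, and I would prove the two inequalities independently.

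For the lower bound \eqref{eq:asymptotic_in_t.bound1}, the plan is to refine the classical ``fooling function'' argument by using many disjoint bumps, a technique that converts the suboptimal $N^{-s/d-1/2}$ yielded by a single bump into the sharp $N^{-s/d}$. Given any $N$-point rule $\numint_N$ with nodes $\PT{x}_1,\dots,\PT{x}_N$, a packing/pigeonhole argument produces order $N$ pairwise disjoint spherical caps of geodesic radius $r \asymp N^{-1/d}$, of which at least half contain no node. On each such empty cap $C_i$ I transport a fixed smooth profile via local charts to obtain $\phi_i$ supported in $C_i$ with $\int_{\mathbb{S}^d} \phi_i \, \dd \sigma_d \asymp r^d$ and $\|\phi_i\|_{d,s} \asymp r^{d/2 - s}$ (standard scaling of Sobolev seminorms). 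Setting $f = \sum_i \phi_i$ gives $\numint_N[f] = 0$ and $\int_{\mathbb{S}^d} f \, \dd \sigma_d \asymp 1$, while the (almost) disjoint supports yield $\|f\|_{d,s}^2 \asymp N r^{d - 2s} \asymp N^{2s/d}$. The Koksma--Hlawka duality built into the definition of $\wce$ then produces $\wce(\mathbb{H}^s, \numint_N) \gtrsim N^{-s/d}$.

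For the upper bound \eqref{eq:asymptotic_in_t.bound2}, the plan is to expand the reproducing kernel in spherical harmonics and exploit the exactness of $t$-designs. Starting from \eqref{eq:wce-A} and the expansion of $K_{d,s}$, the $t$-design property annihilates all contributions from harmonics of degree $1 \leq n \leq t$ (each $\harmY_{n,k}^{(d)}$ is a zero-mean polynomial of degree $n$) and the $n = 0$ term cancels against $\int\int K_{d,s}$, leaving
\begin{equation*}
\wce(\mathbb{H}^s, \widehat{\numint}_N)^2 = \frac{1}{N(t)^2} \sum_{j,k=1}^{N(t)} T(\PT{x}_j \cdot \PT{x}_k), \qquad T(u) \DEF \sum_{n > t} \lambda'_n Z(d,n) P_n^{(d)}(u).
\end{equation*}
The diagonal contribution alone is $T(1)/N(t) \asymp t^{d-2s}/t^d = t^{-2s}$, matching the claimed rate.

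The hardest step is controlling the off-diagonal part $\sum_{j \neq k} T(\PT{x}_j \cdot \PT{x}_k)$. A crude Cauchy--Schwarz estimate on the Gegenbauer expansion only yields $\sum_k \bigl(\frac{1}{N(t)} \sum_j \harmY_{n,k}^{(d)}(\PT{x}_j)\bigr)^2 \leq Z(d,n)$ and hence the suboptimal $\wce \lesssim t^{d/2-s}$ after summing the tail $\sum_{n > t} n^{d-1-2s}$. Closing this $t^{d/2}$ gap relies on the specific equidistribution of spherical $t$-designs with $N(t) \asymp t^d$: the Delsarte--Goethals--Seidel bound forces such designs to be separated at scale $\asymp t^{-1}$, matching the effective width $\asymp t^{-1}$ of $T$ near the diagonal and making the diagonal contribution dominant after a Riemann-sum style comparison with $\int\int T \, \dd \sigma_d \, \dd \sigma_d = 0$. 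This off-diagonal estimate is the delicate analytic content of \cite{BrHe2007, He2006, HeSl2005b, HeSl2005, HeSl2006}, which I would invoke to complete the proof.
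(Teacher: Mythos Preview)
The paper does not itself prove this theorem; it is stated with citations and used as a black box, so there is no in-paper proof to compare your sketch against.

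Your lower-bound argument via bump packing is the standard one and is correct as outlined.

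Your upper-bound outline correctly reduces to the tail kernel $T$ and correctly identifies the diagonal contribution, but the off-diagonal discussion contains a real gap. You assert that ``the Delsarte--Goethals--Seidel bound forces such designs to be separated at scale $\asymp t^{-1}$''; this is not what the DGS bound gives. DGS is a \emph{lower} bound $N(t)\gtrsim t^d$ on the number of nodes and says nothing about separation. Separation at scale $t^{-1}$ would require an \emph{upper} bound $N(t)\lesssim t^d$, which the theorem does not assume: \eqref{eq:asymptotic_in_t.bound2} is asserted for every spherical $t$-design, with $N(t)$ arbitrary, and for designs with many more than $t^d$ points your Riemann-sum comparison is unavailable. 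So the mechanism you propose for closing the $t^{d/2}$ gap is not the one that actually works in this generality.

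The cited papers obtain \eqref{eq:asymptotic_in_t.bound2} without any separation hypothesis. The Hesse--Sloan argument proceeds by an inductive lifting in the smoothness parameter: one relates $\wce(\mathbb{H}^{s+1},\widehat{\numint}_N)$ to $\wce(\mathbb{H}^{s},\widehat{\numint}_N)$ through the Laplace--Beltrami operator (which shifts the decay of the $\lambda_n$ by exactly $n^{-2}$) and picks up one extra factor of $t^{-1}$ at each step, with a base case at low smoothness handled directly. This route uses only the exactness of the $t$-design and works uniformly in $N(t)$. Deferring to the references is reasonable---it is exactly what the paper does---but the heuristic you supply for why they succeed should be replaced by this one.
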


\begin{rmk}
Korevaar and Meyers~\cite{KoMe1993} conjectured that to every $t$ there exists a spherical $t$-design with $N(t) \leq C \, t^d$ ($C > 0$ some constant) points. (A. Bondarenko, D. Radchenko, and M. Viazovska propose in \cite{BoRaVi2011arXiv} a proof of this conjecture.) In light of this the bounds in \eqref{eq:asymptotic_in_t.bound1} and \eqref{eq:asymptotic_in_t.bound2} are best possible.
\end{rmk}

As a corollary to \eqref{eq:wce-B}, Theorem~\ref{thm:gen.Stolarsky}, Proposition~\ref{prop:case.beta.EQ.integer}, and Theorem~\ref{thm:bounds.spherical.designs} we have the following result.

\begin{cor}
Let $d \geq 2$ and $\beta > 1/2$. There exists a constant $c'(\beta,d) > 0$ independent of $\beta$ and $d$ such that for any $N$-point configuration $\mathcal{P}_N$ we have
\begin{equation*}
\frac{c'(\beta,d)}{N^{1/2 + (\beta - 1/2)/d}} \leq \left(\int_{-1}^1 \int_{\mathbb{S}^d} \left| \Delta_{\mathcal{P}_N,\beta}(\PT{z},t) \right|^2 \dd \sigma_d( \PT{z} ) \dd t \right)^{1/2} \qquad \mbox{for all } N \ge 1.
\end{equation*}
Further, there exists a constant $C'(\beta,d) > 0$ depending only on $\beta$ and $d$ such that
\begin{equation*} 
\left(\int_{-1}^1 \int_{\mathbb{S}^d} \left| \Delta_{X_N,\beta}(\PT{z},t) \right|^2 \dd \sigma_d( \PT{z} ) \dd t \right)^{1/2}  \leq \frac{C'(\beta,d)}{t^{d/2 + \beta-1/2}}  \qquad \mbox{for all }  t \ge 1,
\end{equation*}
where the point set $X_N$ is an $N(t)$-point spherical $t$-design.

Moreover, if $\beta = M$, $M$ a positive integer, then for any spherical $t$-design $X_N = \{\PT{x}_1,\ldots, \PT{x}_N\}$ on $\mathbb{S}^d$ with $t \geq M - 1$ and $N = N(t)$ points, one has
\begin{equation*}
\begin{split}
&\int_{-1}^1 \int_{\mathbb{S}^d} \left| \Delta_{X_N,\beta}(\PT{z},t) \right|^2 \dd \sigma_d( \PT{z} ) \dd t \\ 
&\phantom{equals}= (-1)^{M-1} c_d(M) \left[ \int_{\mathbb{S}^d} \int_{\mathbb{S}^d} \left\| \PT{x} - \PT{y} \right\|^{2M-1} \, \dd \sigma_d( \PT{x} ) \dd \sigma_d( \PT{y} ) - \frac{1}{N^2} \sum_{j=1}^N \sum_{k=1}^N \left\| \PT{x}_j - \PT{x}_k \right\|^{2M-1} \right].
\end{split}
\end{equation*}
\end{cor}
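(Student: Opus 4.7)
The plan is to assemble the three assertions from identities already derived in the excerpt. First I would use \eqref{eq:wce-B} to rewrite the $\mathbb{L}_2$-integral of $\Delta_{\mathcal{P}_N,\beta}$ as the worst-case error $\wce(\mathcal{H}_\beta, \numint_N)$ of the equal-weight rule $\numint_N$ on the unit ball of $\mathcal{H}_\beta$. By Theorem~\ref{thm:identification.thm} the space $\mathcal{H}_\beta$ coincides with the Sobolev space $\mathbb{H}^s(\mathbb{S}^d)$ with $s = \beta - 1/2 + d/2$, and the two norms $\|\cdot\|_{\mathcal{K}_\beta}$ and $\|\cdot\|_{d,s}$ are equivalent. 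Since the worst-case error is a supremum over the unit ball of the given norm, equivalent norms produce worst-case errors that agree up to multiplicative constants depending only on $\beta$ and $d$; hence Theorem~\ref{thm:bounds.spherical.designs} can be transferred to $\mathcal{H}_\beta$. Substituting $s/d = 1/2 + (\beta-1/2)/d$ yields the first bound, and substituting $s = d/2 + \beta - 1/2$ into the upper bound of Theorem~\ref{thm:bounds.spherical.designs} yields the second, with the norm-equivalence factors absorbed into $c'(\beta,d)$ and $C'(\beta,d)$.

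For the exact identity in the case $\beta = M \in \mathbb{N}$ on a spherical $t$-design $X_N$ with $t \geq M-1$, I would begin with Theorem~\ref{thm:gen.Stolarsky} specialised to $\beta = M$,
\[
\int_{-1}^1 \int_{\mathbb{S}^d} \left| \Delta_{X_N,M}(\PT{z},t) \right|^2 \dd \sigma_d(\PT{z}) \dd t = \frac{1}{N^2} \sum_{j,k=1}^N \mathcal{K}_M(\PT{x}_j, \PT{x}_k) - \int_{\mathbb{S}^d} \int_{\mathbb{S}^d} \mathcal{K}_M(\PT{x},\PT{y}) \, \dd \sigma_d(\PT{x}) \dd \sigma_d(\PT{y}),
\]
and substitute the decomposition $\mathcal{K}_M(\PT{x},\PT{y}) = \mathcal{Q}_{M-1}(\PT{x},\PT{y}) + (-1)^M c_d(M) \|\PT{x}-\PT{y}\|^{2M-1}$ furnished by Proposition~\ref{prop:case.beta.EQ.integer}. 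Since $\mathcal{Q}_{M-1}(\PT{x},\PT{y})$ is a polynomial of degree $M-1$ in $\PT{x}\cdot\PT{y}$, for each fixed $\PT{y}$ it is a polynomial of total degree at most $M-1$ in the coordinates of $\PT{x}$. The $t$-design property with $t \geq M-1$ therefore gives
\[
\frac{1}{N} \sum_{j=1}^N \mathcal{Q}_{M-1}(\PT{x}_j, \PT{x}_k) = \int_{\mathbb{S}^d} \mathcal{Q}_{M-1}(\PT{x},\PT{x}_k) \, \dd \sigma_d(\PT{x}) \qquad \text{for every } k,
\]
and by rotation invariance of $\sigma_d$ and zonality of $\mathcal{Q}_{M-1}$ the right-hand side is independent of $\PT{x}_k$, equal to the double integral $\int\int \mathcal{Q}_{M-1} \, \dd\sigma_d \dd\sigma_d$. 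Averaging over $k$ shows that the polynomial contribution cancels completely between the double sum and the double integral. What survives is the distance contribution, and combining the prefactor $(-1)^M$ with the minus sign from the subtraction produces $(-1)^{M-1} c_d(M)$ multiplying the bracket $\bigl[ \int\int \|\PT{x}-\PT{y}\|^{2M-1} \, \dd\sigma_d \dd\sigma_d - \tfrac{1}{N^2}\sum_{j,k} \|\PT{x}_j - \PT{x}_k\|^{2M-1} \bigr]$, as claimed.

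There is no genuine obstacle here; the argument is essentially bookkeeping. The only point requiring care is in the two inequalities, where one must verify that equivalent norms transfer the bounds of Theorem~\ref{thm:bounds.spherical.designs} from $\mathbb{H}^s$ to $\mathcal{H}_\beta$ only up to constants depending on $\beta$ and $d$; this is why $c'(\beta,d)$ and $C'(\beta,d)$ differ from $c(s,d)$ and $C(s,d)$, with the extra factors accounting for the equivalence constants from Theorem~\ref{thm:identification.thm}.
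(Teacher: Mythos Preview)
Your argument is correct and matches the paper's approach exactly: the paper presents this result explicitly as a corollary to \eqref{eq:wce-B}, Theorem~\ref{thm:gen.Stolarsky}, Proposition~\ref{prop:case.beta.EQ.integer}, and Theorem~\ref{thm:bounds.spherical.designs}, without supplying further details. You have correctly filled in the one step left implicit, namely invoking Theorem~\ref{thm:identification.thm} and the resulting norm equivalence to transfer the worst-case error bounds from $\mathbb{H}^s$ to $\mathcal{H}_\beta$, and your handling of the cancellation of $\mathcal{Q}_{M-1}$ on a spherical $(M{-}1)$-design is precisely the mechanism the paper points to in the paragraph preceding the corollary.
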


\subsection{Representations of the reproducing kernel}

Our first result in this subsection concerns a closed form representation of the reproducing kernel $\mathcal{K}_\beta$ for arbitrary $\beta > 1/2$. Essentially, it can be expressed as a sum of a constant multiple of a (conditionally positive definite) (signed) power of the Euclidean distance and a certain integral of a hypergeometric function over the sphere $\mathbb{S}^d$ which turns out to be a special case of a  Kamp{\'e} de F{\'e}riet function. The latter simplifies to a polynomial of the point distance (or their inner product) if $\beta$ is a positive integer, as stated in Proposition~\ref{prop:case.beta.EQ.integer}. The exceptional cases when $\beta$ equals $L+1/2$ for some positive integer $L$, can be obtained by a limit process from the general result. In this way a logarithmic term in the Euclidean distance is introduced.

A {\em Kamp{\'e} de F{\'e}riet function in two variables} is defined by the double series expansion
\begin{equation}
\begin{split} \label{eq:Kampe.de.Feriet.function}
&\KampedeFerietA{q:s;\nu}{p:r;\mu}{\displaystyle a_1, \dots,a_p \\ \displaystyle b_1, \dots, b_q}{\displaystyle c_1, \dots, c_r \\ \displaystyle d_1, \dots, d_s}{ \displaystyle f_1, \dots, f_\mu \\ \displaystyle g_1, \dots, g_\nu}{x,y} \\
&\phantom{equals}= \sum_{m=0}^\infty \sum_{n=0}^\infty \frac{\Pochhsymb{a_1}{m+n}\cdots \Pochhsymb{a_p}{m+n}}{\Pochhsymb{b_1}{m+n} \cdots \Pochhsymb{b_q}{m+n}}  \frac{\Pochhsymb{c_1}{m} \cdots \Pochhsymb{c_r}{m}}{\Pochhsymb{d_1}{m} \cdots \Pochhsymb{d_s}{m}} \frac{\Pochhsymb{f_1}{n} \cdots \Pochhsymb{f_\mu}{n}}{\Pochhsymb{g_1}{n} \cdots \Pochhsymb{g_\nu}{n}} \, \frac{x^m y^n}{m! n!},
\end{split}
\end{equation}
where $(a)_m$ denotes the Pochhammer symbol. It is a generalization of the {\em Gauss hypergeometric function}
\begin{equation*}
\Hypergeom{2}{1}{a,b}{c}{z} = \sum_{n=0}^\infty \frac{\Pochhsymb{a}{n} \Pochhsymb{b}{n}}{\Pochhsymb{c}{n} n!} z^n.
\end{equation*}

The second part of the theorem(s) concerns the ultraspherical expansion in terms of normalized Gegenbauer polynomials $P_n^{(d)}(t) = \GegenbauerC_n^{(d-1)/2}(t) / \GegenbauerC_k^{(d-1)/2}(1)$.

\begin{thm}[general case] \label{thm:integral.general.case}
Let $d\geq 2$. Let $\beta > 1/2$ with $2\beta-1$ not an even integer $> 0$; that is, $\beta - 1/2 = L + \eps$ for some integer $L\geq0$ and some real $0 < \eps < 1$. 

The kernel \eqref{eq:kernel} has the following representation
\begin{equation}  \label{eq:cal.K.beta.form}
\mathcal{K}_\beta( \PT{x}, \PT{y} ) = \mathcal{Q}_{\beta-1}( \PT{x}, \PT{y} ) + (-1)^{L+1} c_d(\beta) \left\| \PT{x} - \PT{y} \right\|^{2\beta-1}, \qquad \PT{x}, \PT{y} \in \mathbb{S}^d,
\end{equation}
where 
\begin{align*}
\mathcal{Q}_{\beta-1}( \PT{x}, \PT{y} ) 
&= \int_{\mathbb{S}^d} \frac{\frac{1}{2\beta-1} \Hypergeom{2}{1}{1/2-\beta,1-\beta}{3/2-\beta}{\left( \frac{\PT{x} \cdot \PT{z}-\PT{y} \cdot \PT{z}}{2 - \PT{x} \cdot \PT{z} - \PT{y} \cdot \PT{z}} \right)^2}}{\left( 1 - \frac{1}{2} \PT{x} \cdot \PT{z} - \frac{1}{2} \PT{y} \cdot \PT{z} \right)^{1-2\beta}} \dd \sigma_{d}(\PT{z}) \\
&= \KampedeFerietA{1 : 0; 1}{2 : 0; 1}{\displaystyle 1/2-\beta, 1 - \beta \\ \displaystyle (d+1)/2}{ \displaystyle \vphantom{\displaystyle (d+1)/2}- \\ \displaystyle \vphantom{\displaystyle (d+1)/2}- }{ \displaystyle 1/2 \\ \displaystyle 3/2 - \beta}{\frac{1+\PT{x} \cdot \PT{y}}{2}, \frac{1-\PT{x} \cdot \PT{y}}{2}}, \qquad \PT{x}, \PT{y} \in \mathbb{S}^d,
\end{align*}
and $c_d(\beta)$ is the positive constant 
\begin{equation*}
c_d(\beta) \DEF \frac{2^{-2\beta}}{\sin( \pi (\beta-1/2-L) )} \frac{\gammafcn((d+1)/2) \gammafcn(\beta)\gammafcn(\beta)}{\gammafcn(\beta+d/2) \gammafcn(\beta+1/2)}.
\end{equation*}
For $\beta = 1$ the kernel \eqref{eq:cal.K.beta.form} reduces to \eqref{eq:cal.C.kernel}. 

Let $s$ be defined by means of $2\beta - 1 = 2 s - d$. The ultraspherical expansions 
\begin{equation} \label{eq:thm.K.beta.2.series}
\mathcal{Q}_{\beta-1}( \PT{x}, \PT{y} ) = \sum_{k=0}^{\infty} A_{k}^{(2)}(s,d) \, Z(d,k) P_k^{(d)}(\PT{x} \cdot \PT{y}),
\end{equation}
reduces to a polynomial of degree $M-1$ if $\beta = M$ ($M$ a positive integer) and the coefficients $A_{k}^{(2)}(s,d)$ have the following asymptotic behavior otherwise: 
\begin{equation*}
A_{k}^{(2)}(s,d) \sim - 2^{d-2} \left[ \frac{\gammafcn( (d+1)/2 ) \gammafcn(s+1/2) \gammafcn(2s)}{\gammafcn( (d + 1)/2 - s) \gammafcn(s+1)} \right]^2 \frac{\gammafcn(d/2)}{\gammafcn(d/2+2s)} \, k^{-4s} \qquad \text{as $k \to \infty$.}
\end{equation*}
Thus, the series expansion \eqref{eq:thm.K.beta.2.series} converges for all $\PT{x}, \PT{y} \in \mathbb{S}^d$ if $s > d/2$.
\end{thm}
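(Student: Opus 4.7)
The plan is to carry out the inner $t$-integral in \eqref{eq:kernel} explicitly for fixed $\PT{x},\PT{y},\PT{z}\in\mathbb{S}^d$ and then perform the outer integration over $\PT{z}\in\mathbb{S}^d$. Setting $a=\PT{x}\cdot\PT{z}$ and $b=\PT{y}\cdot\PT{z}$ and assuming $a\le b$ without loss of generality, the substitution $u=a-t$ followed by $v=u/(b-a)$ reduces the $t$-integral to
\begin{equation*}
(b-a)^{2\beta-1}\int_0^{(a+1)/(b-a)} v^{\beta-1}(1+v)^{\beta-1}\,\dd v,
\end{equation*}
which by Euler's integral is a multiple of a Gauss ${}_2F_1$. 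The factor $(b-a)^{2\beta-1}=|a-b|^{2\beta-1}$ already carries the non-smooth part, and the singular power $\|\PT{x}-\PT{y}\|^{2\beta-1}$ will emerge from the rotation-invariant identity
\begin{equation*}
\int_{\mathbb{S}^d}|(\PT{x}-\PT{y})\cdot\PT{z}|^{2\beta-1}\,\dd\sigma_d(\PT{z}) = \|\PT{x}-\PT{y}\|^{2\beta-1}\,\frac{\gammafcn((d+1)/2)\,\gammafcn(\beta)}{\sqrt{\pi}\,\gammafcn(\beta+d/2)}.
\end{equation*}

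To isolate the singular piece, I would pass to the symmetric variables $\xi=(a+b)/2$ and $\eta=(a-b)/2$ (for instance via $w=v/(1+v)$), and apply the standard Gauss connection formula for ${}_2F_1$ around $z=1$. Both Gamma prefactors in this connection formula are finite precisely under the hypothesis that $2\beta-1$ is not a positive even integer, and one arrives at
\begin{equation*}
\int_{-1}^{\min(a,b)}\!(a-t)^{\beta-1}(b-t)^{\beta-1}\,\dd t = \alpha_\beta\,|a-b|^{2\beta-1} + \frac{(1-\xi)^{2\beta-1}}{2\beta-1}\,\Hypergeom{2}{1}{1/2-\beta,\,1-\beta}{3/2-\beta}{\left(\eta/(1-\xi)\right)^{2}},
\end{equation*}
with $\alpha_\beta=\gammafcn(\beta)\gammafcn(1-2\beta)/\gammafcn(1-\beta)$. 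Integrating the second summand over $\PT{z}\in\mathbb{S}^d$ produces exactly the first integral representation of $\mathcal{Q}_{\beta-1}(\PT{x},\PT{y})$ stated in the theorem; integrating the first summand produces $\alpha_\beta\cdot\kappa_d(\beta)\cdot\|\PT{x}-\PT{y}\|^{2\beta-1}$, where $\kappa_d(\beta)$ is the constant displayed above. Simplifying by the reflection formula $\gammafcn(1-2\beta)\gammafcn(2\beta)=\pi/\sin(2\pi\beta)$, the Legendre duplication $\gammafcn(2\beta)=2^{2\beta-1}\gammafcn(\beta)\gammafcn(\beta+1/2)/\sqrt{\pi}$, and the identity $\sin(\pi(\beta-1/2-L))=(-1)^{L+1}\cos(\pi\beta)$, reproduces exactly the claimed constant $(-1)^{L+1}c_d(\beta)$; specialising to $\beta=1$ (so $L=0$) gives $c_d(1)=C_d$ and recovers \eqref{eq:cal.C.kernel}.

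For the Kampé de Fériet form of $\mathcal{Q}_{\beta-1}$, I would expand $(1-\xi)^{2\beta-1}$ times the ${}_2F_1$ above as a double power series in the monomials $(\PT{x}\cdot\PT{z})^j(\PT{y}\cdot\PT{z})^k$ and integrate termwise, using the classical formula that expresses $\int_{\mathbb{S}^d}(\PT{x}\cdot\PT{z})^j(\PT{y}\cdot\PT{z})^k\,\dd\sigma_d$ as a ratio of Pochhammer symbols times a zonal polynomial in $\PT{x}\cdot\PT{y}$. Re-indexing the resulting triple sum in the pair of variables $(1+\PT{x}\cdot\PT{y})/2$ and $(1-\PT{x}\cdot\PT{y})/2$ (natural because of the square $\eta^{2}$) yields the Kampé de Fériet representation of $\mathcal{Q}_{\beta-1}$ stated in the theorem. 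The ultraspherical expansion \eqref{eq:thm.K.beta.2.series} then follows by combining \eqref{eq:identification.kernel.expansion} with the classical Gegenbauer expansion of $\|\PT{x}-\PT{y}\|^{2\beta-1}=(2-2\,\PT{x}\cdot\PT{y})^{\beta-1/2}$, and the stated $k^{-4s}$ asymptotic of $A^{(2)}_k(s,d)$ reduces to the known sharp asymptotics of the Gegenbauer coefficients of $(1-u)^{\beta-1/2}$.

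The main obstacle will be the bookkeeping in the Kampé de Fériet step: tracking Pochhammer symbols through the Gauss transformations and the sphere-moment formula so as to land on the exact index pattern of the theorem requires several contiguous-relation and re-summation identities. A secondary difficulty is pinning down the precise constant in the asymptotic of $A^{(2)}_k(s,d)$: since $A^{(2)}_k$ is the difference of two sequences of Gegenbauer coefficients (those of $\mathcal{K}_\beta$ and those of $\|\PT{x}-\PT{y}\|^{2\beta-1}$) which individually decay like $k^{-2s}$, the leading $k^{-2s}$ contributions must cancel exactly, so the $k^{-4s}$ constant has to be extracted from second-order expansions of the relevant Gegenbauer coefficient formulas.
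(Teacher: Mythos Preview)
Your decomposition of the inner $t$-integral and the use of the Gauss connection formula to split off the $|a-b|^{2\beta-1}$ piece is exactly how the paper proceeds (Lemmas~\ref{lem:aux.res.1} and~\ref{lem:aux.res.3}), and your computation of the constant $(-1)^{L+1}c_d(\beta)$ via reflection and duplication is correct.

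For the Kamp\'e de F\'eriet identity your route diverges from the paper's. The paper does \emph{not} expand in monomials $(\PT{x}\cdot\PT{z})^j(\PT{y}\cdot\PT{z})^k$; instead it passes to cylindrical coordinates about the pole $\PT{p}=(\PT{x}-\PT{y})/\|\PT{x}-\PT{y}\|$, recognises the inner $\mathbb{S}^{d-1}$-integral as an Appell $\HyperF_2$, transforms it to an Appell $\HyperF_4$ (Proposition~\ref{prop:Appell.F.identity}), and integrates the resulting double series over the height variable $u$. This lands directly on the Kamp\'e de F\'eriet double series in $(1\pm\PT{x}\cdot\PT{y})/2$. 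Your monomial-expansion idea is plausible in principle, but the moment $\int_{\mathbb{S}^d}(\PT{x}\cdot\PT{z})^j(\PT{y}\cdot\PT{z})^k\,\dd\sigma_d$ is itself a polynomial in $\PT{x}\cdot\PT{y}$ (not a single Pochhammer ratio), so the re-indexing you describe hides another nontrivial summation.

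The genuine gap is in your asymptotic argument for $A_k^{(2)}(s,d)$. You propose to obtain $A_k^{(2)}$ as the difference $\lambda_k-(-1)^{L+1}c_d(\beta)\,a_k(s,d)/Z(d,k)$ using \eqref{eq:identification.kernel.expansion}. But the coefficients $\lambda_k$ are not known independently: Theorem~\ref{thm:identification.thm} (which supplies $\lambda_k\asymp k^{-2s}$) is proved \emph{from} the present theorem, so invoking it here is circular. Even granting $\lambda_k\asymp k^{-2s}$, your plan would require the subleading term in both $\lambda_k$ and the distance-kernel coefficients with exact constants, since the $k^{-2s}$ pieces must cancel --- this is substantially harder than what you suggest. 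The paper avoids this entirely: starting from the explicit series \eqref{eq:K.beta.2.second} for $\mathcal{Q}_{\beta-1}$, it expands each term in Gegenbauer polynomials (Proposition~\ref{prop:hypergeometric.polynomial.expansion.specialized}), obtains an explicit double-integral formula \eqref{eq:A.k.beta.first} for $A_k^{(2)}$ valid for large $k$, and reads off the $k^{-4s}$ asymptotic directly from that integral. No cancellation argument is needed.
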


Various forms of the coefficients $A_{k}^{(2)}(s,d)$ can be found in the Proof of Theorem~\ref{thm:integral.general.case}.

The following potential-theoretical constant (also appearing at the right-hand side of Stolarsky's invariance principle \eqref{eq:Stolarsky.s.invariance.principle} if $\lambda = 1$) plays a role:
\begin{equation} \label{eq:V.lambda.S.d}
V_{\lambda}(\mathbb{S}^d) \DEF \int_{\mathbb{S}^d} \int_{\mathbb{S}^d} \left\| \PT{x} - \PT{y} \right\|^\lambda \dd \sigma_d( \PT{x} ) \dd \sigma_d( \PT{y} ) = 2^{d-1+\lambda} \frac{\gammafcn((d+1)/2) \gammafcn(( d + \lambda ) / 2)}{\sqrt{\pi} \, \gammafcn(d + \lambda / 2)}.
\end{equation}
(Its computation can be found, for example, in S. Hubbert and B. J. C. Baxter~\cite{HuBa2001}; also cf. Lemma~\ref{lem:aux.res.2}. For the connection to potential-theory we refer the reader to G. Bj{\"o}rck~\cite{Bj1956}.)

The expansion of the (signed) power of the Euclidean distance is well-known and follows, for example, from S. Hubbert and B. J. C. Baxter~\cite{HuBa2001}. (For details see, for example, \cite{BrWo2010d_pre}.) 
Using the notation of Theorem~\ref{thm:integral.general.case} one has
\begin{equation} \label{eq:signed.generalized.distance.expansion}
(-1)^{L+1} \left\| \PT{x} - \PT{y} \right\|^{2s-d} = \sum_{k=0}^\infty \frac{a_k(s,d)}{Z(d,k)} \, Z(d,k) P_k^{(d)}( \PT{x} \cdot \PT{y} ), \qquad \PT{x}, \PT{y} \in \mathbb{S}^d,
\end{equation}
where (cf. \eqref{eq:V.lambda.S.d})
\begin{align}
\frac{a_0(s,d)}{Z(d,0)} &= (-1)^{L+1} V_{2s-d}( \mathbb{S}^d ) = (-1)^{L+1} 2^{2s-1}\,\frac{\gammafcn\left((d+1)/2\right)\gammafcn(s)}{\sqrt{\pi}\,\gammafcn(d/2+s)}, \notag \\
\frac{a_k(s,d)}{Z(d,k)} &= (-1)^{L+1} V_{2s-d}( \mathbb{S}^d ) \frac{\Pochhsymb{d/2-s}{k}}{\Pochhsymb{d/2+s}{k}}, \quad k \geq 1. \label{eq:a.k.s.d}
\end{align}
The coefficient $a_k(s,d) / Z(d,k)$ is alternating in sign for $k \leq L$ and positive for $k \geq L+1$. The coefficients have the following asymptotic behavior
\begin{equation*}
\frac{a_k(s,d)}{Z(d,k)} \sim (-1)^{L+1} V_{2s-d}( \mathbb{S}^d ) \frac{\gammafcn(d/2+s)}{\gammafcn(d/2-s)} \, k^{-2s} \qquad \text{as $k \to \infty$.} \notag
\end{equation*}

For further references we record the two special cases ($L = \lfloor \beta - 1/2 \rfloor$)
\begin{align}
\mathcal{K}_\beta( \PT{x}, \PT{x} ) 
&= \frac{1}{2\beta-1} \frac{\gammafcn((d+1)/2) \gammafcn(2\beta + d/2-1)}{\gammafcn(\beta + (d -1)/2) \gammafcn(\beta + d/2)}, \label{eq:K.beta.x.x} \\
\mathcal{K}_\beta( \PT{x}, -\PT{x} ) &= (-1)^{L+1} c_d(\beta) 2^{2\beta-1} + \frac{1}{2\beta-1} \Hypergeom{3}{2}{1/2-\beta, 1 - \beta, 1/2}{3/2 - \beta, (d+1)/2}{1}. \label{eq:K.beta.x.neg.x}
\end{align}
The relation \eqref{eq:K.beta.x.x} is valid for each $\beta > 0$. The relation \eqref{eq:K.beta.x.neg.x} is valid for $2\beta-1$ not an integer $\geq 0$.

\begin{thm}[exceptional cases] \label{thm:integral.exceptional.cases}
Let $d\geq 2$. Let $2\beta-1$ be the positive even integer $2L$. 

The kernel \eqref{eq:kernel} has the following representation: for $\PT{x}, \PT{y} \in \mathbb{S}^d$
\begin{equation}  \label{eq:cal.K.beta.form.exceptional}
\mathcal{K}_{L+1/2}( \PT{x}, \PT{y} ) = \widetilde{\mathcal{Q}}_{L-1/2}( \PT{x}, \PT{y} ) + (-1)^{L+1} \frac{\Pochhsymb{1/2}{L} \Pochhsymb{1/2}{L}}{2^{2L} \Pochhsymb{(d+1)/2}{L} \, L!} \left\| \PT{x} - \PT{y} \right\|^{2L} \ln \| \PT{x} - \PT{y} \|,  
\end{equation}
where the integral representation of $\widetilde{\mathcal{Q}}_{L-1/2}$ is given by
\begin{equation*}
\begin{split}
\widetilde{\mathcal{Q}}_{L-1/2}( \PT{x}, \PT{y} ) 
&= \int_{\mathbb{S}^d} \frac{\frac{1}{2} \sum_{n=1}^{L} \frac{1}{n} \frac{\Pochhsymb{1/2-L}{L-n}}{(L-n)!} \left( \frac{\PT{x} \cdot \PT{z}-\PT{y} \cdot \PT{z}}{2 - \PT{x} \cdot \PT{z} - \PT{y} \cdot \PT{z}} \right)^{2L-2n}}{\left( 1 - \frac{1}{2} \PT{x} \cdot \PT{z} - \frac{1}{2} \PT{y} \cdot \PT{z} \right)^{-2L}} \dd \sigma_{d}(\PT{z}) \\
&\phantom{=}- \int_{\mathbb{S}^d} \frac{\frac{1}{2} \frac{\Pochhsymb{1/2-L}{L}}{L!} \sum_{n=1}^\infty \frac{1}{n} \frac{\Pochhsymb{1/2}{n}}{\Pochhsymb{L+1}{n}} \left( \frac{\PT{x} \cdot \PT{z}-\PT{y} \cdot \PT{z}}{2 - \PT{x} \cdot \PT{z} - \PT{y} \cdot \PT{z}} \right)^{2n+2L}}{\left( 1 - \frac{1}{2} \PT{x} \cdot \PT{z} - \frac{1}{2} \PT{y} \cdot \PT{z} \right)^{-2L}} \dd \sigma_{d}(\PT{z}) \\
&\phantom{=}+ \frac{(-1)^{L+1}}{2} \frac{\Pochhsymb{1/2}{L}}{2^{2L} \, L!} \int_{\mathbb{S}^d} \left( \PT{x} \cdot \PT{z}-\PT{y} \cdot \PT{z} \right)^{2L} \\
&\phantom{=\pm}\times \Bigg[ \ln \Big( \frac{\frac{\PT{x}-\PT{y}}{\| \PT{x} - \PT{y} \|} \cdot \PT{z}}{2 - \PT{x} \cdot \PT{z}-\PT{y} \cdot \PT{z}} \Big)^2 + \digammafcn( 1 / 2 ) - \digammafcn( L + 1 ) \Bigg] \dd \sigma_d( \PT{z} ), \qquad \PT{x}, \PT{y} \in \mathbb{S}^d,
\end{split}
\end{equation*}
and the series representation of $\widetilde{\mathcal{Q}}_{L-1/2}$ is given by
\begin{equation*}
\begin{split}
\widetilde{\mathcal{Q}}_{\beta-1}( \PT{x}, \PT{y} ) 
&= \frac{1}{2} \sum_{n=0}^{L-1} \frac{1}{L-n} \frac{\Pochhsymb{1/2-L}{n}\Pochhsymb{1/2}{n}}{\Pochhsymb{(d+1)/2}{n} \, n!} \Hypergeom{2}{1}{-n, 1/2 - L }{1/2-n}{\frac{1+\PT{x} \cdot \PT{y}}{2}} \\
&\phantom{=}- \frac{1}{2} \sum_{n=1}^\infty \frac{1}{n} \frac{\Pochhsymb{1/2-L}{n+L}\Pochhsymb{1/2}{n+L}}{\Pochhsymb{(d+1)/2}{n+L} \, (n+L)!} \Hypergeom{2}{1}{-L-n, 1/2 - L}{1/2-L-n}{\frac{1+\PT{x} \cdot \PT{y}}{2}} \\
&\phantom{=}+ \frac{1}{2} \frac{\Pochhsymb{1/2-L}{L}\Pochhsymb{1/2}{L}}{\Pochhsymb{(d+1)/2}{L} \, L!} \sum_{k=1}^L \frac{(-1)^k}{k} \frac{\Pochhsymb{-L}{k}}{\Pochhsymb{1/2-L}{k}} \left( \frac{1-\PT{x} \cdot \PT{y}}{2} \right)^{L-k} \left( \frac{1+\PT{x} \cdot \PT{y}}{2} \right)^k \\
&\phantom{=}- \frac{(-1)^{L+1}}{2} \frac{\Pochhsymb{1/2}{L} \Pochhsymb{1/2}{L}}{\Pochhsymb{(d+1)/2}{L} L!} \, \left( \frac{1-\PT{x} \cdot \PT{y}}{2} \right)^{L} \Big( 2 \ln 2 + \digammafcn( L + 1 ) + \digammafcn( L + ( d + 1 ) / 2 ) \\
&\phantom{=\pm}- \digammafcn( 1 / 2 ) - \digammafcn( L + 1 / 2 ) \Big), \qquad \PT{x}, \PT{y} \in \mathbb{S}^d.
\end{split}
\end{equation*}

Let $s$ be defined by means of $2\beta - 1 = 2 s - d$; that is $s = L + d/2$. The coefficients in the ultraspherical expansions 
\begin{equation} \label{eq:thm.K.beta.2.series.exceptional}
\widetilde{\mathcal{Q}}_{L-1/2}( \PT{x}, \PT{y} ) = \sum_{k=0}^{\infty} \widetilde{A}_{k}^{(2)}(L+d/2,d) \, Z(d,k) P_k^{(d)}(\PT{x} \cdot \PT{y}),
\end{equation}
have the following asymptotic behavior: 
\begin{equation*} 
\widetilde{A}_{k}^{(2)}(L+d/2,d) \sim - 2^{d-2} \left[ \frac{\gammafcn( (d+1)/2 ) \gammafcn(L+(d+1)/2) \gammafcn(2L+d)}{\gammafcn( 1/2 - L) \gammafcn(L+1+d/2)} \right]^2 \frac{\gammafcn(d/2)}{\gammafcn(d/2+2L+d)} \, k^{-4s}.
\end{equation*}
Thus, the series expansion \eqref{eq:thm.K.beta.2.series.exceptional} converges for all $\PT{x}, \PT{y} \in \mathbb{S}^d$ if $s > d/2$.
\end{thm}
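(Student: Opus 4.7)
The plan is to derive Theorem~\ref{thm:integral.exceptional.cases} as an analytic continuation of Theorem~\ref{thm:integral.general.case}: take the limit $\beta \to L+1/2$ through values with $2\beta-1$ not an even integer. Since the integrand $(\PT{x}\cdot\PT{z}-t)_+^{\beta-1}(\PT{y}\cdot\PT{z}-t)_+^{\beta-1}$ is jointly continuous in $\beta > 1/2$ and dominated locally uniformly, $\mathcal{K}_\beta(\PT{x},\PT{y})$ is continuous in $\beta$. Both terms in the decomposition $\mathcal{K}_\beta = \mathcal{Q}_{\beta-1} + (-1)^{L+1} c_d(\beta)\|\PT{x}-\PT{y}\|^{2\beta-1}$ develop a simple pole at $\beta = L + 1/2$: $c_d(\beta)$ via $\sin(\pi(\beta-1/2-L))$, and the integrand of $\mathcal{Q}_{\beta-1}$ via the denominator parameter $3/2-\beta$ of the inner $\Hypergeom{2}{1}{1/2-\beta,1-\beta}{3/2-\beta}{\cdot}$, which reaches the non-positive integer $1-L$. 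These poles must cancel by the a priori continuity, and the $O(1)$ remainder produces the logarithmic correction.

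Setting $\eps = \beta - 1/2 - L$, I would first expand
\[
(-1)^{L+1} c_d(\beta)\,\|\PT{x}-\PT{y}\|^{2\beta-1} = \frac{\mathcal{A}_L(d)}{\eps}\,\|\PT{x}-\PT{y}\|^{2L} + (-1)^{L+1}\frac{\Pochhsymb{1/2}{L}\Pochhsymb{1/2}{L}}{2^{2L}\Pochhsymb{(d+1)/2}{L}\,L!}\,\|\PT{x}-\PT{y}\|^{2L}\ln\|\PT{x}-\PT{y}\| + O(\eps),
\]
where the log coefficient is read off from $\|\PT{x}-\PT{y}\|^{2\eps} = 1 + 2\eps\ln\|\PT{x}-\PT{y}\| + O(\eps^2)$ and the pole residue $\mathcal{A}_L(d)$ comes from $c_d(\beta)$ after Legendre duplication combined with $\gammafcn(L+1/2) = \sqrt{\pi}\,\Pochhsymb{1/2}{L}$. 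In parallel, the hypergeometric series defining the integrand of $\mathcal{Q}_{\beta-1}$ splits into three blocks: the regular partial sum $0 \leq n \leq L-1$, the single pole term at $n=L$ (isolated because $\Pochhsymb{1-L-\eps}{L}$ vanishes linearly in $\eps$), and the shifted tail $n \geq L+1$ reindexed by $n \mapsto n+L$. The regular block and shifted tail produce the first two double sums in the series formula for $\widetilde{\mathcal{Q}}_{L-1/2}$ after termwise $\PT{z}$-integration. The $1/\eps$ residue of the middle block must exactly cancel $\mathcal{A}_L(d)/\eps\,\|\PT{x}-\PT{y}\|^{2L}$ (a consistency check on gamma factors), and its $O(1)$ correction contributes both the log integrand and the $\digammafcn(1/2)-\digammafcn(L+1)$ term, obtained by expanding the Pochhammer ratios one order past the cancellation.

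The series representation then follows by integrating the remaining three $\PT{z}$-integrals termwise, using Lemma~\ref{lem:aux.res.2} (or equivalently the zonal beta-type identity already employed in the general case) on monomials in $\PT{x}\cdot\PT{z}$ and $\PT{y}\cdot\PT{z}$; binomial expansion of $(\PT{x}\cdot\PT{z}-\PT{y}\cdot\PT{z})^{2L}$ against the log integrand produces the finite polynomial sum in $((1\pm\PT{x}\cdot\PT{y})/2)$, and the constant part of that expansion gives the final digamma-laden line. For the ultraspherical expansion \eqref{eq:thm.K.beta.2.series.exceptional} and its asymptotic, I would take the $s$-derivative of the general coefficient $A_k^{(2)}(s,d)$ at $s = L + d/2$: since $\Pochhsymb{d/2-s}{k}$ vanishes there for $k \geq L + 1$, the log correction upgrades the zero to a first derivative, which via Stirling's formula contributes an extra $1/k$ factor and yields the stated $k^{-4s}$ decay together with the gamma-factor constant. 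The main obstacle will be the bookkeeping to verify that the two $1/\eps$-residues cancel exactly in sign and normalization, and to carry the Taylor expansion one order beyond cancellation cleanly enough to identify the digamma coefficients with the values stated in the theorem.
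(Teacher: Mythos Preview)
Your overall plan is correct and matches the paper's approach closely: set $\eps=\beta-\tfrac12-L$, split the hypergeometric series in the integrand of $\mathcal{Q}_{\beta-1}$ into the block $0\le n\le L-1$, the singular term $n=L$, and the tail $n\ge L+1$, and let the $1/\eps$ residue of the $n=L$ term cancel against the pole of $c_d(\beta)$, with the next order producing the logarithm and the digamma constants. The paper carries this out exactly so; the only organizational difference is that for the \emph{series} representation the paper does not re-integrate over $\PT{z}$ but instead applies the same three-block splitting directly to the series form of $\mathcal{Q}_{\beta-1}$ already obtained in the general case (the form with the terminating ${}_2\HyperF_1\!\big(\begin{smallmatrix}-n,\,1-\beta\\1/2-n\end{smallmatrix};1-v^2\big)$ factors), which avoids redoing the zonal integrals.

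Your treatment of the asymptotic, however, is off. You write that $\Pochhsymb{d/2-s}{k}$ vanishes at $s=L+d/2$ for $k\ge L+1$ and that the ``log correction upgrades the zero to a first derivative'' contributing an extra $1/k$. But in the expression for $A_k^{(2)}(s,d)$ this Pochhammer symbol is multiplied by $\gammafcn(d/2-s)$, and the product $\gammafcn(d/2-s)\Pochhsymb{d/2-s}{k}=\gammafcn(k+d/2-s)=\gammafcn(k-L)$ is perfectly finite for $k>L$; there is no zero to upgrade, and no extra $1/k$ appears (indeed an extra $1/k$ would give $k^{-4s-1}$, not the claimed $k^{-4s}$). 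The paper's argument is much simpler: the asymptotic constant in the general-case formula involves $\gammafcn((d+1)/2-s)^{-2}=\gammafcn(1/2-L)^{-2}$, which is finite since $1/2-L$ is a negative half-integer, and the remainder term $\Omega(k)$ in that derivation stays uniformly bounded for $\eps$ near $0$. Hence one may simply set $s=L+d/2$ in the asymptotic formula from the general case, and $\widetilde{A}_k^{(2)}(L+d/2,d)$ inherits the $k^{-4s}$ rate with the stated constant. No $s$-derivative is needed.
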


The ultraspherical expansion of the function
\begin{equation*}
(-1)^{L+1} \left\| \PT{x} - \PT{y} \right\|^{2L} \ln \| \PT{x} - \PT{y} \|
\end{equation*}
is also  well-known; cf., for example, S. Hubbert and B. J. C. Baxter~\cite{HuBa2001}. (For details see, for example, \cite{BrWo2010d_pre}.) Indeed, for a positive integer $L$ one has
\begin{equation*}
(-1)^{L+1} \left\| \PT{x} - \PT{y} \right\|^{2L} \ln \| \PT{x} - \PT{y} \| = \sum_{k = 0}^\infty \frac{b_k(2L,d)}{Z(d, k)} \, Z(d, k) \, P_k^{(d)}( \PT{x} \cdot \PT{y}  ), \qquad \PT{x}, \PT{y} \in \mathbb{S}^d,
\end{equation*}
with the following coefficients: 
\begin{align*}
\frac{b_0(2L,d)}{Z(d, 0)} &= (-1)^{L+1} V_{2L}^{\mathrm{log}}(\mathbb{S}^d) = (-1)^{L+1} \frac{V_{2L}( \mathbb{S}^d )}{2} \left[ \digammafcn( L + d / 2 ) + 2 \log 2 - \digammafcn( L + d ) \right]
\intertext{and for $k \leq L$}
\frac{b_k(2L,d)}{Z(d, k)} &= \frac{1}{2} \frac{a_k(2L,d)}{Z(d, k)} \left[ \digammafcn( L + 1 ) + \digammafcn( L + d/2 ) + 2 \log 2 - \digammafcn( L + 1 - k ) - \digammafcn( k + L + d ) \right],
\intertext{where $a_k(2L,d) / Z(d, k)$ is given in \eqref{eq:a.k.s.d}, and for $k > L$}
\frac{b_k(2L,d)}{Z(d, k)} &= 2^{2L + d - 2} \frac{\gammafcn( ( d + 1 ) / 2 ) \gammafcn( L + d / 2 ) \gammafcn(L + 1)}{\sqrt{\pi}} \frac{\gammafcn( k - L )}{\gammafcn( k + L + d )}.
\end{align*}
The coefficients have the following asymptotic behavior 
\begin{equation*}
\frac{b_k(2L,d)}{Z(d, k)} \sim 2^{2L + d - 2} \frac{\gammafcn( ( d + 1 ) / 2 ) \gammafcn( L + d / 2 ) \gammafcn(L + 1)}{\sqrt{\pi}} k^{-2s} \qquad \text{as $k \to \infty$.}
\end{equation*}

For further references we record that
\begin{equation}
\begin{split} \label{eq:K.L.plus.half.at.antipodal}
\mathcal{K}_{L+1/2}( \PT{x}, -\PT{x} )  
&= - \frac{1}{2} \sum_{\substack{n=0 \\ n\neq L}}^\infty \frac{1}{n-L} \frac{\Pochhsymb{1/2-L}{n} \Pochhsymb{1/2}{n}}{\Pochhsymb{(d+1)/2}{n} n!} - \frac{(-1)^{L+1}}{2} \frac{\Pochhsymb{1/2}{L} \Pochhsymb{1/2}{L}}{\Pochhsymb{(d+1)/2}{L} L!} \\
&\phantom{=\pm}\times \Big( \digammafcn( L + 1 ) + \digammafcn( L + ( d + 1 ) / 2 ) - \digammafcn( 1 / 2 ) - \digammafcn( L + 1 / 2 ) \Big). 
\end{split}
\end{equation}

\section{Proofs}
\label{sec:proofs}

First, we prove the following auxiliary results.

\begin{lem} \label{lem:aux.res.1}
Let $\beta > 0$. Then for $-1 \leq a \leq 1$ and $-1 \leq b \leq 1$ with $a \neq b$
\begin{align*}
H_\beta( a, b ) 
&\DEF \int_{-1}^1 \left( a - t \right)_+^{\beta-1} \left( b - t \right)_+^{\beta-1} \dd t \\
&= \frac{1}{\beta} \left( 1 + \min\{a, b\} \right) \left( 1 + a \right)^{\beta - 1} \left( 1 + b \right)^{\beta - 1} \Hypergeom{2}{1}{1-\beta,1}{1+\beta}{\frac{1+\min\{a,b\}}{1+\max\{a,b\}}}.
\end{align*}

If $b = a$ and $-1 < a \leq 1$, then the integral exists if and only if $\beta > 1/2$. In this case
\begin{equation*}
H_\beta( a, a ) = \frac{1}{2 \beta - 1} \left( 1 + a \right)^{2\beta - 1}.
\end{equation*}

If $a = b = - 1$, then $H_\beta( -1, -1 ) = 0$ for all $\beta > 0$.
\end{lem}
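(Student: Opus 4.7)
The key observation is that the integrand $(a-t)_+^{\beta-1}(b-t)_+^{\beta-1}$ vanishes unless $t\leq \min\{a,b\}$, so only a one-sided truncation matters. Since both $H_\beta(a,b)$ and the proposed right-hand side are symmetric in $a$ and $b$, I may assume without loss of generality that $a\leq b$, in which case $\min\{a,b\}=a$ and $\max\{a,b\}=b$. The trivial corner case $a=b=-1$ then gives an empty domain of integration, so $H_\beta(-1,-1)=0$.

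For the diagonal case $b=a>-1$, I would compute directly: the integral reduces to $\int_{-1}^a (a-t)^{2\beta-2}\,\dd t$, which converges exactly when $2\beta-2>-1$, i.e.\ $\beta>1/2$, and then evaluates to $(1+a)^{2\beta-1}/(2\beta-1)$, as claimed.

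For the main case $-1\leq a<b\leq 1$, I would perform the affine change of variable $t=a-(1+a)s$, which maps $s\in[0,1]$ onto the active support $t\in[-1,a]$ with $\dd t=-(1+a)\,\dd s$. Under this substitution $a-t=(1+a)s$ and $b-t=(b-a)+(1+a)s$, so
\begin{equation*}
H_\beta(a,b)=(1+a)^{\beta}\int_0^1 s^{\beta-1}\bigl((b-a)+(1+a)s\bigr)^{\beta-1}\,\dd s.
\end{equation*}
A further substitution $s=1-u$ rearranges the second factor into the form $(1+b)-(1+a)u$, producing
\begin{equation*}
H_\beta(a,b)=(1+a)^{\beta}(1+b)^{\beta-1}\int_0^1(1-u)^{\beta-1}\Bigl(1-\tfrac{1+a}{1+b}u\Bigr)^{\beta-1}\,\dd u.
\end{equation*}
The remaining integral is exactly the Euler integral representation of the Gauss hypergeometric function, namely
\begin{equation*}
\int_0^1 (1-u)^{\beta-1}(1-zu)^{\beta-1}\,\dd u=\frac{\Gamma(1)\,\Gamma(\beta)}{\Gamma(1+\beta)}\Hypergeom{2}{1}{1-\beta,1}{1+\beta}{z}=\frac{1}{\beta}\Hypergeom{2}{1}{1-\beta,1}{1+\beta}{z},
\end{equation*}
evaluated at $z=(1+a)/(1+b)=(1+\min\{a,b\})/(1+\max\{a,b\})\in[0,1)$, which lies in the domain of convergence. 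Reinstating factors and undoing the ordering assumption yields the formula in the lemma.

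The only step that needs any care is getting the algebra of the substitution right so that the three factors $(1+a)^{\beta-1}$, $(1+b)^{\beta-1}$, and $(1+\min\{a,b\})$ outside the hypergeometric combine correctly; everything else is bookkeeping.
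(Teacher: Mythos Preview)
Your proof is correct and follows essentially the same route as the paper: reduce to the ordered case, change variables to map the support $[-1,\min\{a,b\}]$ onto $[0,1]$, and recognize the resulting integral $\int_0^1(1-u)^{\beta-1}(1-zu)^{\beta-1}\,\dd u$ as the Euler representation of $\frac{1}{\beta}\,{}_2F_1(1-\beta,1;1+\beta;z)$ with $z=(1+a)/(1+b)$. The only cosmetic difference is that the paper uses the single substitution $(1+a)u=1+t$ where you compose two substitutions $t=a-(1+a)s$ followed by $s=1-u$; these are the same map.
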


\begin{proof}
Without loss of generality we may assume that $-1 \leq a < b \leq 1$. The change of variable $( 1 + a ) \, u = 1 + t$ gives
\begin{align*}
H_\beta( a, b ) 
&= \int_{-1}^1 \left( a - t \right)_+^{\beta-1} \left( b - t \right)_+^{\beta-1} \dd t \\
&= \int_{-1}^{a} \left( a - t \right)^{\beta-1} \left( b - t \right)^{\beta-1} \dd t \\
&= \left( 1 + a \right) \int_0^1 \left[ 1 + a - \left( 1 + a \right) u \right]^{\beta - 1} \left[ 1 + b - \left( 1 + a \right) u \right]^{\beta - 1} \dd u \\
&= \left( 1 + a \right)^{\beta} \left( 1 + b \right)^{\beta - 1} \int_0^1 \left( 1 - u \right)^{\beta-1} \left( 1 - \frac{1 + a}{1+b} \, u \right)^{\beta-1} \dd u.
\end{align*}
The integral represents a Gauss hypergeometric function (cf. \cite[Eq.~15.6.1]{DLMF2011.08.29}); that is
\begin{equation*}
H_\beta( a, b ) = \left( 1 + \min\{a, b\} \right) \left( 1 + a \right)^{\beta - 1} \left( 1 + b \right)^{\beta - 1} \frac{1}{\beta} \Hypergeom{2}{1}{1-\beta,1}{1+\beta}{\frac{1+a}{1+b}}.
\end{equation*}
If $\min\{a,b\} = b$, then $a$ and $b$ need to be interchanged in the result above. This yields the general formula for the integral. 

In the case $b = a$ and $-1 < a \leq 1$ one has (if and only if $\beta > 1/2$)
\begin{equation*}
H_\beta( a, a ) = \int_{-1}^1 \left( a - t \right)_+^{2\beta-2} \dd t = \left( 1 + a \right)^{2\beta-1} \int_0^1 \left(1 - u \right)^{2\beta-2} \dd u = \frac{\left( 1 + a \right)^{2\beta-1}}{2\beta-1}.
\end{equation*}
Clearly, $H_\beta(-1,-1) = 0$ for $\beta > 0$. This gives the special case.
\end{proof}

\begin{lem} \label{lem:aux.res.2}
Let $\gamma \geq 1$. Then
\begin{equation*}
\int_{\mathbb{S}^d} \left( 1 + \PT{x} \cdot \PT{z} \right)^{\gamma} \dd \sigma_d( \PT{z} ) = 2^{-\gamma} \int_{\mathbb{S}^d} \left\| \PT{x} - \PT{z} \right\|^{2\gamma} \dd \sigma_d( \PT{z} ) = 2^{d - 1 + \gamma} \frac{\gammafcn((d+1)/2) \gammafcn(\gamma + d/2)}{\sqrt{\pi} \, \gammafcn(\gamma + d)}.
\end{equation*}
\end{lem}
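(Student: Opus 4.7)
The plan is to handle the two equalities separately: the first by a symmetry argument, the second by reduction to a one-dimensional Beta integral.

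For the first equality, I would exploit the fact that the antipodal map $\PT{z}\mapsto -\PT{z}$ is an isometry of $\mathbb{S}^d$ and therefore preserves the normalized surface measure $\sigma_d$. Applying this change of variable converts $(1+\PT{x}\cdot\PT{z})^{\gamma}$ into $(1-\PT{x}\cdot\PT{z})^{\gamma}$. Since $\|\PT{x}-\PT{z}\|^{2}=2-2\,\PT{x}\cdot\PT{z}=2(1-\PT{x}\cdot\PT{z})$ for $\PT{x},\PT{z}\in\mathbb{S}^d$, we have $(1-\PT{x}\cdot\PT{z})^{\gamma}=2^{-\gamma}\|\PT{x}-\PT{z}\|^{2\gamma}$, which gives the first identity.

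For the second equality, I would use the standard reduction of a zonal integral over the sphere to a one-dimensional integral against the weight $(1-t^2)^{d/2-1}$. Since the integrand depends on $\PT{z}$ only through $t\DEF \PT{x}\cdot\PT{z}$, and the normalizing constant is precisely $1/B(1/2,d/2)=\gammafcn((d+1)/2)/(\sqrt{\pi}\,\gammafcn(d/2))$, we obtain
\begin{equation*}
\int_{\mathbb{S}^d}\bigl(1+\PT{x}\cdot\PT{z}\bigr)^{\gamma}\dd\sigma_d(\PT{z})
=\frac{\gammafcn((d+1)/2)}{\sqrt{\pi}\,\gammafcn(d/2)}\int_{-1}^{1}(1+t)^{\gamma}\,(1-t^2)^{d/2-1}\dd t.
\end{equation*}
Writing $(1-t^2)^{d/2-1}=(1-t)^{d/2-1}(1+t)^{d/2-1}$ and performing the substitution $u=(1+t)/2$ reduces the right-hand side to a Beta integral.

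The remaining calculation is routine: the substitution yields a factor $2^{\gamma+d-1}$ together with $\int_0^1 u^{\gamma+d/2-1}(1-u)^{d/2-1}\dd u=\betafcn(\gamma+d/2,\,d/2)=\gammafcn(\gamma+d/2)\gammafcn(d/2)/\gammafcn(\gamma+d)$. Collecting the constants, the $\gammafcn(d/2)$ in the numerator cancels the one in the denominator of the normalizing constant, producing exactly the stated value $2^{d-1+\gamma}\gammafcn((d+1)/2)\gammafcn(\gamma+d/2)/(\sqrt{\pi}\,\gammafcn(\gamma+d))$. The only subtle point, requiring the hypothesis $\gamma\geq 1$ (indeed $\gamma>-d/2$ suffices), is convergence at the endpoint $t=1$, which is automatic here.
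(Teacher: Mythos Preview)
Your proof is correct and follows essentially the same approach as the paper: the paper invokes ``spherical symmetry of $\sigma_d$'' for the first equality (your antipodal map makes this explicit), and for the second it applies the Funk--Hecke formula and the same substitution $2u=1+t$ to reduce to a Beta integral. One small slip: the convergence issue you flag is at the endpoint $t=-1$ (equivalently $u=0$), not $t=1$, which is why the sharp condition is $\gamma>-d/2$.
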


\begin{proof}
The first part follows from the spherical symmetry of the measure $\sigma_d$ and the identity
\begin{equation*}
\left\| \PT{x} - \PT{y} \right\|^2 = 2 \left( 1 - \PT{x} \cdot \PT{y} \right), \qquad \PT{x}, \PT{y} \in \mathbb{S}^d.
\end{equation*}

Using the Funk-Hecke formula (cf. \cite{Mu1966}), we obtain for the second part
\begin{equation*}
\int_{\mathbb{S}^d} \left( 1 + \PT{x} \cdot \PT{z} \right)^{\gamma} \dd \sigma_d( \PT{z} ) = \frac{\omega_{d-1}}{\omega_d} \int_{-1}^1 \left( 1 + t \right)^\gamma \left( 1 - t^2 \right)^{d/2-1} \dd t.
\end{equation*}
The change of variable $2u = 1 + t$ yields
\begin{equation*}
\int_{\mathbb{S}^d} \left( 1 + \PT{x} \cdot \PT{z} \right)^{\gamma} \dd \sigma_d( \PT{z} ) = 2^{\gamma + d - 2 + 1} \frac{\omega_{d-1}}{\omega_d} \int_0^1 u^{\gamma+d/2-1} \left( 1 - u \right)^{d/2-1} \dd u.
\end{equation*}
The last integral is the Beta function $\betafcn( \gamma + d/2, d/2 )$. By \eqref{eq:C.d} and \cite[Eq.~5.12.1]{DLMF2011.08.29}
\begin{equation*}
\int_{\mathbb{S}^d} \left( 1 + \PT{x} \cdot \PT{z} \right)^{\gamma} \dd \sigma_d( \PT{z} ) = 2^{d - 1 + \gamma} \frac{\gammafcn((d+1)/2)}{\sqrt{\pi} \, \gammafcn(d/2)} \frac{\gammafcn(\gamma + d/2) \gammafcn(d/2)}{\gammafcn(\gamma + d)}.
\end{equation*}
Simplification gives the result.
\end{proof}

\begin{lem} \label{lem:aux.res.3}
Let $\beta > 0$. Let $1 \geq a > b > -1$. If $2 \beta - 1$ is not an integer $\geq 0$, then
\begin{equation*}
\begin{split}
&\frac{1}{\beta} \left( 1 + a \right)^{\beta - 1} \left( 1 + b \right)^{\beta} \Hypergeom{2}{1}{1-\beta,1}{1+\beta}{\frac{1+b}{1+a}} \\
&\phantom{equals}= \frac{\left( 1 + a \right)^{2\beta - 1}}{2\beta-1} \Hypergeom{2}{1}{1-\beta,1-2\beta}{2-2\beta}{\frac{a-b}{1+a}} + \frac{\gammafcn( 1 - 2 \beta ) \gammafcn(\beta)}{\gammafcn(1 - \beta)} \left( a - b \right)^{2\beta-1}.
\end{split}
\end{equation*}
If $2 \beta - 1 = m$ and $m$ a positive even integer, then
\begin{equation*}
\begin{split}
&\frac{1}{\beta} \left( 1 + a \right)^{\beta - 1} \left( 1 + b \right)^{\beta} \Hypergeom{2}{1}{1-\beta,1}{1+\beta}{\frac{1+b}{1+a}} \\
&\phantom{equals}= \frac{\left( 1 + a \right)^{\beta - 1} \left( 1 + b \right)^{\beta} }{\gammafcn( 2 \beta )} \sum_{k=0}^{m-1} (-1)^k \Pochhsymb{1-\beta}{k} (m - 1 - k)! \left( \frac{a-b}{1+a} \right)^k \\
&\phantom{equals=}- \frac{(-1)^m \gammafcn(\beta) }{\gammafcn(1-\beta) \gammafcn(2\beta)} \left( a - b \right)^{2\beta-1} \left( \frac{1+b}{1+a} \right)^\beta  \sum_{k=0}^\infty \frac{\Pochhsymb{\beta}{k}}{k!} \left( \frac{a-b}{1+a} \right)^k \Big( \digammafcn( k + \beta ) - \digammafcn( k + 1 ) \Big) \\
&\phantom{equals=}- \frac{(-1)^m \gammafcn(\beta) }{\gammafcn(1-\beta) \gammafcn(2\beta)} \left( \ln \frac{a-b}{1+a} \right) \left( a - b \right)^{2\beta-1}.
\end{split}
\end{equation*}
If $\beta$ is a positive integer $n$ (that is, $2\beta-1$ is a positive odd integer), then \begin{equation*}
\frac{1}{\beta} \left( 1 + a \right)^{\beta - 1} \left( 1 + b \right)^{\beta} \Hypergeom{2}{1}{1-\beta,1}{1+\beta}{\frac{1+b}{1+a}} = - \left( 1 + a \right)^{n - 1} \left( 1 + b \right)^{n} \sum_{k=0}^{n-1} \frac{\Pochhsymb{1-n}{k}}{\Pochhsymb{1-2n}{k+1}} \left( \frac{a-b}{1+a} \right)^k.
\end{equation*}
\end{lem}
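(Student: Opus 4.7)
The object to transform is
$$E(\beta; a, b) := \tfrac{1}{\beta}(1+a)^{\beta-1}(1+b)^\beta\, F(A,B;C;z),$$
with Gauss parameters $A = 1-\beta$, $B = 1$, $C = 1+\beta$, argument $z = (1+b)/(1+a)\in(0,1)$, and $1-z = (a-b)/(1+a)$. Since all three right-hand sides are expressed in powers of $1-z$, the natural tool is a linear connection formula relating $F(A,B;C;z)$ to $F(\cdot,\cdot;\cdot;1-z)$. The combination $C-A-B = 2\beta-1$ dictates which formula applies, and the three cases of the lemma correspond exactly to $2\beta-1 \notin \Z_{\geq 0}$, $2\beta-1$ a positive even integer, and $\beta = n$ a positive integer (in which case $2\beta-1$ is a positive odd integer and the $F$ terminates).

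For Case~1 I invoke the generic connection formula (DLMF~15.8.4), valid whenever $C-A-B \notin \Z$. This produces two terms involving $F(\cdot;1-z)$; to the first I apply Euler's transformation with $c-a-b = -\beta$,
$$F(1-\beta, 1; 2-2\beta; u) = (1-u)^{-\beta}\,F(1-\beta, 1-2\beta; 2-2\beta; u),$$
and in the second I reduce $F(2\beta,\beta;2\beta;1-z)$ to $z^{-\beta}$ via the identity $F(c,b;c;u) = (1-u)^{-b}$. Multiplying by $\frac{1}{\beta}(1+a)^{\beta-1}(1+b)^\beta$, collecting powers of $(1+a)$, $(1+b)$, $z$, and using $\Gamma(1+\beta)/\beta = \Gamma(\beta)$ and $\Gamma(2\beta-1)/\Gamma(2\beta) = 1/(2\beta-1)$, yields the Case~1 identity. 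The gamma-ratio $\Gamma(1-2\beta)\Gamma(\beta)/\Gamma(1-\beta)$ emerges naturally as the prefactor of $(a-b)^{2\beta-1}$ from the second summand.

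For Case~2, $2\beta-1 = m$ is a positive even integer and $\Gamma(A+B-C) = \Gamma(-m)$ is singular, so the generic formula breaks down. I apply instead the logarithmic connection formula DLMF~15.8.10. Two key simplifications stem from $B = 1$: firstly $(B+m)_k/(k+m)! = (1+m)_k/(k+m)! = 1/m!$ collapses the double-factorial structure, and secondly $\psi(B+k+m) = \psi(k+m+1)$ cancels exactly the $-\psi(k+m+1)$ present in the DLMF bracket. The bracket thereby reduces to $\ln(1-z) + \psi(\beta+k) - \psi(k+1)$. Separating the $\ln(1-z)$ term and summing $\sum_k(\beta)_k(1-z)^k/k! = z^{-\beta}$ produces the third (purely logarithmic) line of the Case~2 formula; the digamma-sum remains as the middle line; and the finite DLMF sum, using $(m-1)!/(1-m)_k = (-1)^k(m-1-k)!$, becomes the polynomial first line. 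Throughout $(-1)^m = 1$ since $m$ is even, and $m! = \Gamma(2\beta)$.

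For Case~3, with $\beta = n$ a positive integer, $F(1-n,1;1+n;z)$ terminates as a polynomial of degree $n-1$ in $z$, and I expand it directly in the basis $\{(1-z)^k\}_{k=0}^{n-1}$. Writing $z^j = (1-(1-z))^j = \sum_l \binom{j}{l}(-1)^l(1-z)^l$, interchanging summation order and re-indexing produces an inner terminating Gauss sum at $1$ evaluable by Chu--Vandermonde $F(-N,b;c;1) = (c-b)_N/(c)_N$. Using the Pochhammer identities $(1-2n)_{k+1} = (-1)^{k+1}(2n-1)!/(2n-2-k)!$ and $(n)_{n-1-k} = (2n-2-k)!/(n-1)!$, the coefficient of $(1-z)^k$ simplifies to $-n(1-n)_k/(1-2n)_{k+1}$, as claimed, the prefactor $(1+a)^{n-1}(1+b)^n$ being simply reinstated at the end. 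The principal obstacle is the bookkeeping in Case~2: without spotting the two cancellations driven by $B = 1$, the DLMF~15.8.10 formula looks unwieldy and the claimed clean splitting into three terms is not apparent. Cases~1 and 3 are more routine once the correct connection identity or Chu--Vandermonde summation is identified.
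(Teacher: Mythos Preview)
Your proposal is correct and, for Cases~1 and~2, follows the paper's proof essentially line by line: DLMF~15.8.4 plus an Euler transformation and the reduction $F(2\beta,\beta;2\beta;u)=(1-u)^{-\beta}$ in Case~1, and DLMF~15.8.10 with the two $B=1$ simplifications in Case~2.

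The only genuine difference is Case~3. The paper applies DLMF~15.8.7 directly, which for a terminating series gives
\[
F(1-n,1;1+n;z)=\frac{(n)_{n-1}}{(1+n)_{n-1}}\,F(1-n,1;2-2n;1-z)=\frac{n}{2n-1}\,F(1-n,1;2-2n;1-z),
\]
after which the stated sum is read off from the series coefficients. You instead expand $z^j$ binomially in $(1-z)$, interchange sums, and evaluate the inner sum by Chu--Vandermonde; your Pochhammer bookkeeping then recovers exactly the same coefficient $-n(1-n)_k/(1-2n)_{k+1}$. Both arguments are valid: the paper's route is shorter because it invokes a known transformation, while yours is self-contained and makes explicit what DLMF~15.8.7 encodes.
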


\begin{proof}
First, we observe that by the assumptions on $a$ and $b$ there holds
\begin{equation*}
0 < \frac{1+b}{1+a} < 1 \qquad \text{and} \qquad 0 < \frac{a-b}{1+a} < 1. 
\end{equation*}
Thus, the conditions on the argument of the hypergeometric function imposed by the variable transformation rules used in the following are satisfied.

Let $2 \beta - 1$ be not an integer $\geq 0$. Then, by \cite[Eq.~15.8.4]{DLMF2011.08.29} (and after changing to regularized Gauss hypergeometric functions)
\begin{align*}
f_\beta( a, b) 
&\DEF \frac{1}{\beta} \left( 1 + a \right)^{\beta - 1} \left( 1 + b \right)^{\beta} \Hypergeom{2}{1}{1-\beta,1}{1+\beta}{\frac{1+b}{1+a}} \\
&= \frac{\gammafcn(\beta+1)}{\beta} \left( 1 + a \right)^{\beta - 1} \left( 1 + b \right)^{\beta} \HypergeomReg{2}{1}{1-\beta,1}{1+\beta}{\frac{1+b}{1+a}} \\
&= \frac{\gammafcn(\beta+1)}{\beta} \left( 1 + a \right)^{\beta - 1} \left( 1 + b \right)^{\beta} \frac{\pi}{\sin\big( \pi ( 2 \beta - 1 ) \big)} \Bigg[ \frac{1}{\gammafcn(2\beta) \gammafcn(\beta)} \HypergeomReg{2}{1}{1-\beta,1}{2-2\beta}{\frac{a-b}{1+a}} \\
&\phantom{=}- \frac{1}{\gammafcn(1 - \beta) \gammafcn(1)} \left( \frac{a-b}{1+a} \right)^{2\beta-1} \HypergeomReg{2}{1}{2\beta,\beta}{2\beta}{\frac{a-b}{1+a}} \Bigg].
\end{align*}

In the second hypergeometric function the lower parameter coincides with one of the upper ones. Thus, it reduces to (cf. \cite[Eq.~15.4.6]{DLMF2011.08.29})
\begin{equation*}
\HypergeomReg{2}{1}{2\beta,\beta}{2\beta}{\frac{a-b}{1+a}} = \frac{1}{\gammafcn(2\beta)} \Hypergeom{2}{1}{2\beta,\beta}{2\beta}{\frac{a-b}{1+a}} = \frac{1}{\gammafcn(2\beta)} \left( 1 - \frac{a-b}{1+a} \right)^{-\beta} = \frac{1}{\gammafcn(2\beta)} \left( \frac{1+b}{1+a} \right)^{-\beta}.
\end{equation*}
To the first hypergeometric function we apply the last of the  transformations \cite[Eq.~15.8.1]{DLMF2011.08.29}
\begin{equation*}
\HypergeomReg{2}{1}{1-\beta,1}{2-2\beta}{\frac{a-b}{1+a}} = \left( \frac{1+b}{1+a} \right)^{-\beta} \HypergeomReg{2}{1}{1-\beta,1-2\beta}{2-2\beta}{\frac{a-b}{1+a}}
\end{equation*}
From the reflection formula for the Gamma function (\cite[Eq.~5.5.1]{DLMF2011.08.29}) it follows that
\begin{equation*}
\frac{\pi}{\sin\big( \pi ( 2 \beta - 1 ) \big)} = - \frac{\pi}{\sin\big( \pi ( 1 - 2 \beta ) \big)} = - \gammafcn( 1 - 2 \beta ) \gammafcn( 2 \beta ).
\end{equation*}
Taking all these observations into account, we arrive at
\begin{align*}
f_\beta( a, b) 
&= - \gammafcn(\beta) \left( 1 + a \right)^{\beta - 1} \left( 1 + b \right)^{\beta} \gammafcn( 1 - 2 \beta ) \gammafcn( 2 \beta ) \Bigg[ \frac{1}{\gammafcn(2\beta) \gammafcn(\beta)} \left( \frac{1+b}{1+a} \right)^{-\beta} \\
&\phantom{=\pm}\times \HypergeomReg{2}{1}{1-\beta,1-2\beta}{2-2\beta}{\frac{a-b}{1+a}} - \frac{1}{\gammafcn(1 - \beta)} \left( \frac{a-b}{1+a} \right)^{2\beta-1} \frac{1}{\gammafcn(2\beta)} \left( \frac{1+b}{1+a} \right)^{-\beta} \Bigg] \\
&= - \gammafcn( 1 - 2 \beta ) \left( 1 + a \right)^{2\beta - 1} \Bigg[ \HypergeomReg{2}{1}{1-\beta,1-2\beta}{2-2\beta}{\frac{a-b}{1+a}} - \frac{\gammafcn(\beta) }{\gammafcn(1 - \beta)} \left( \frac{a-b}{1+a} \right)^{2\beta-1} \Bigg].
\end{align*}
The first part of the lemma follows after changing back to Gauss hypergeometric function.

Let $2 \beta - 1 = m$ and $m$ an even integer $\geq 0$. Then (cf. \cite[Eq.~15.8.10]{DLMF2011.08.29})
\begin{align*}
f_\beta( a, b) 
&= \frac{\gammafcn(\beta+1)}{\beta} \left( 1 + a \right)^{\beta - 1} \left( 1 + b \right)^{\beta} \Bigg\{ \frac{1}{\gammafcn( \beta ) \gammafcn( 2 \beta )} \sum_{k=0}^{m-1} \frac{\Pochhsymb{1-\beta}{k} \Pochhsymb{1}{k} (m - 1 - k)!}{k!} \left( - \frac{a-b}{1+a} \right)^k \\
&\phantom{=}- \frac{\left( - \frac{a-b}{1+a} \right)^m}{\gammafcn(1-\beta) \gammafcn(1)} \sum_{k=0}^\infty \frac{\Pochhsymb{\beta}{k} \Pochhsymb{2\beta}{k}}{k! (k + m)!} \left( \frac{a-b}{1+a} \right)^k \Big[ \ln \frac{a-b}{1+a} - \digammafcn( k + 1 ) + \digammafcn( k + \beta ) \Big] \Bigg\} \\
&= \frac{\left( 1 + a \right)^{\beta - 1} \left( 1 + b \right)^{\beta} }{\gammafcn( 2 \beta )} \sum_{k=0}^{m-1} \Pochhsymb{1-\beta}{k} (m - 1 - k)! \left( - \frac{a-b}{1+a} \right)^k \\
&\phantom{=}- \frac{(-1)^m \gammafcn(\beta) }{\gammafcn(1-\beta)} \left( a - b \right)^{2\beta-1} \left( \frac{1+b}{1+a} \right)^\beta  \sum_{k=0}^\infty \frac{\Pochhsymb{\beta}{k} \Pochhsymb{2\beta}{k}}{k! (k + m)!} \left( \frac{a-b}{1+a} \right)^k \\
&\phantom{=\pm}\times \Big[ \ln \frac{a-b}{1+a} - \digammafcn( k + 1 ) + \digammafcn( k + \beta ) \Big].
\end{align*}
Using identities for the Pochhammer symbol, one sees that
\begin{equation*}
\frac{\Pochhsymb{2\beta}{k}}{(k + m)!} = \frac{\Pochhsymb{m+1}{k}}{(k + m)!} = \frac{\Pochhsymb{m+1}{k}}{\Pochhsymb{1}{m+k}} = \frac{\Pochhsymb{m+1}{k}}{\Pochhsymb{1}{m} \Pochhsymb{m+1}{k}} = \frac{1}{m!} = \frac{1}{\gammafcn(2\beta)}.
\end{equation*}
This yields
\begin{equation*}
\begin{split}
f_\beta( a, b) 
&= \frac{\left( 1 + a \right)^{\beta - 1} \left( 1 + b \right)^{\beta} }{\gammafcn( 2 \beta )} \sum_{k=0}^{m-1} \Pochhsymb{1-\beta}{k} (m - 1 - k)! \left( - \frac{a-b}{1+a} \right)^k \\
&\phantom{=}- \frac{(-1)^m \gammafcn(\beta) }{\gammafcn(1-\beta) \gammafcn(2\beta)} \left( a - b \right)^{2\beta-1} \left( \frac{1+b}{1+a} \right)^\beta  \sum_{k=0}^\infty \frac{\Pochhsymb{\beta}{k}}{k!} \left( \frac{a-b}{1+a} \right)^k \Big( \digammafcn( k + \beta ) - \digammafcn( k + 1 ) \Big) \\
&\phantom{=}- \frac{(-1)^m \gammafcn(\beta) }{\gammafcn(1-\beta) \gammafcn(2\beta)} \left( a - b \right)^{2\beta-1} \left( \ln \frac{a-b}{1+a} \right) \left( \frac{1+b}{1+a} \right)^\beta \Hypergeom{2}{1}{\beta,1}{1}{\frac{a-b}{1+a}}.
\end{split}
\end{equation*}
The right-most hypergeometric function reduces to $( ( 1 + b ) / ( 1 + a ) )^{-\beta}$. The second part of the lemma follows.

Finally, let $\beta = n$ and $n$ a positive integer. Then $f_n(a,b)$ reduces to a polynomial of degree $n-1$ and \cite[Eq.~15.8.7]{DLMF2011.08.29} can be applied. (Note that after the transformation the lower parameter satisfies $2-2n = 1 - n - M$ with $M \geq 0$; cf. \cite[Eq.~15.2.5]{DLMF2011.08.29}.)
\begin{align*}
f_\beta( a, b) 
&= \frac{1}{n} \left( 1 + a \right)^{n - 1} \left( 1 + b \right)^{n} \Hypergeom{2}{1}{1-n,1}{1+n}{\frac{1+b}{1+a}} \\
&= \left( 1 + a \right)^{n - 1} \left( 1 + b \right)^{n} \frac{1}{n} \frac{\Pochhsymb{n}{n-1}}{\Pochhsymb{1+n}{n-1}} \Hypergeom{2}{1}{1-n,1}{2-2n}{\frac{a-b}{1+a}} \\
&= \left( 1 + a \right)^{n - 1} \left( 1 + b \right)^{n} \frac{1}{2n - 1} \Hypergeom{2}{1}{1-n,1}{2-2n}{\frac{a-b}{1+a}} \\
&= - \left( 1 + a \right)^{n - 1} \left( 1 + b \right)^{n} \sum_{k=0}^{n-1} \frac{\Pochhsymb{1-n}{k}}{\Pochhsymb{1-2n}{k+1}} \left( \frac{a-b}{1+a} \right)^k,
\end{align*}
where it is understood that $\Pochhsymb{0}{0} = 1$. This completes the proof.
\end{proof}

With this preparations we are ready now to proof Theorem~\ref{thm:integral.general.case}.

\begin{proof}[Proof of Theorem~\ref{thm:integral.general.case}]
Let $\beta > 1/2$. The kernel function \eqref{eq:kernel} can be recast as
\begin{equation*}
\mathcal{K}_\beta( \PT{x}, \PT{y} ) = \int_{\mathbb{S}^d} H_\beta( \PT{x} \cdot \PT{z}, \PT{y} \cdot \PT{z} ) \dd \sigma_d( \PT{z} ),
\end{equation*}
where 
\begin{equation*}
H_\beta( a, b ) \DEF \int_{-1}^1 \left( a - t \right)_+^{\beta-1} \left( b - t \right)_+^{\beta-1} \dd t.
\end{equation*}

{\bf Special values $\mathcal{K}_\beta( \PT{x}, \pm \PT{x} )$.} Let $\PT{x}, \PT{y} \in \mathbb{S}^d$ with $\PT{x} = \PT{y}$. Then by Lemma~\ref{lem:aux.res.1} 
\begin{equation*}
H_\beta( \PT{x} \cdot \PT{z}, \PT{x} \cdot \PT{z} ) = \frac{1}{2 \beta - 1} \left( 1 + \PT{x} \cdot \PT{z} \right)^{2\beta - 1}
\end{equation*}
and therefore
\begin{align*}
\mathcal{K}_\beta( \PT{x}, \PT{x} )
&= \int_{\mathbb{S}^d} H_\beta( \PT{x} \cdot \PT{z}, \PT{x} \cdot \PT{z} ) \dd \sigma_d( \PT{z} ) = \frac{1}{2 \beta - 1} \int_{\mathbb{S}^d} \left( 1 + \PT{x} \cdot \PT{z} \right)^{2\beta - 1} \dd \sigma_d( \PT{z} ).
\end{align*}
By Lemma~\ref{lem:aux.res.2} and the duplication formula for the gamma function
\begin{align*}
\mathcal{K}_\beta( \PT{x}, \PT{x} ) 
&= \frac{2^{2\beta + d - 2}}{2\beta-1} \frac{\gammafcn((d+1)/2) \gammafcn(2\beta + d/2-1)}{\sqrt{\pi} \, \gammafcn(2\beta + d -1)} = \frac{1}{2\beta-1} \frac{\gammafcn((d+1)/2) \gammafcn(2\beta + d/2-1)}{\gammafcn(\beta + (d -1)/2) \gammafcn(\beta + d/2)}.
\end{align*}

Let $\PT{x}, \PT{y} \in \mathbb{S}^d$ with $\PT{x} = -\PT{y}$. Then by Lemma~\ref{lem:aux.res.1} 
\begin{equation*}
H_\beta( \PT{x} \cdot \PT{z}, - \PT{x} \cdot \PT{z} ) = \frac{1}{\beta} \left( 1 - | \PT{x} \cdot \PT{z} | \right)^\beta \left( 1 + | \PT{x} \cdot \PT{z} | \right)^{\beta-1} \Hypergeom{2}{1}{1-\beta,1}{1+\beta}{\frac{1 - | \PT{x} \cdot \PT{z} |}{1 + | \PT{x} \cdot \PT{z} |}}
\end{equation*}
is a zonal function that does not depend on the sign of $\PT{x} \cdot \PT{z}$. By the Funk-Hecke formula
\begin{equation*}
\mathcal{K}_\beta( \PT{x}, -\PT{x} ) = 2 \frac{\omega_{d-1}}{\omega_d} \frac{1}{\beta} \int_0^1 \left( 1 - u \right)^\beta \left( 1 + u \right)^{\beta-1} \Hypergeom{2}{1}{1-\beta,1}{1+\beta}{\frac{1 - u}{1 + u}} \left( 1 - u^2 \right)^{d/2-1} \dd u.
\end{equation*}
The change of variables
\begin{equation*}
x = \frac{1-u}{1+u}, \qquad \text{i.e.} \qquad u = \frac{1-x}{1+x}, \quad \dd u = - \frac{2}{(1+x)^2} \dd x,
\end{equation*}
yields 
\begin{align*}
\mathcal{K}_\beta( \PT{x}, -\PT{x} ) 
&= 2 \frac{\omega_{d-1}}{\omega_d} \frac{1}{\beta} \int_0^1 \left( \frac{2}{1+x} \right)^{\beta+d/2-1} \left( \frac{2x}{1+x} \right)^{\beta-1+d/2-1} \Hypergeom{2}{1}{1-\beta,1}{1+\beta}{x} \frac{2}{(1+x)^2} \dd x \\
&= 2^{2\beta+d-1} \frac{\omega_{d-1}}{\omega_d} \frac{1}{\beta} \int_0^1 \frac{x^{\beta-1+d/2-1} \left( 1 - x \right)^{1-1}}{\left( 1 + x \right)^{2\beta+d-1}} \Hypergeom{2}{1}{1-\beta,1}{1+\beta}{x} \dd x.
\end{align*}
By \cite[Eq.~2.21.22]{PrBrMa1990III} and \eqref{eq:C.d}
\begin{align*}
\mathcal{K}_\beta( \PT{x}, -\PT{x} ) 
&= 2^{2\beta+d-1} \frac{\omega_{d-1}}{\omega_d} \frac{1}{\beta} \frac{1}{2^{2\beta+d-1}} \frac{\gammafcn(\beta+1) \gammafcn(1) \gammafcn(2\beta)}{\gammafcn(2\beta-1) \gammafcn( \beta + 1 )} \Hypergeom{3}{2}{1,2\beta+d-1,2\beta}{2\beta+1,\beta+1}{\frac{1}{2}} \\
&= \frac{\gammafcn((d+1)/2)}{\sqrt{\pi} \, \gammafcn(d/2)} \frac{2\beta-1}{\beta} \Hypergeom{3}{2}{1,2\beta+d-1,2\beta}{2\beta+1,\beta+1}{\frac{1}{2}}.
\end{align*}

For $d=2$ the $_3\HyperF_2$-hypergeometric function reduces to a $_2\HyperF_1$ function that simplifies further as follows (see \cite[Eq.~15.4.28]{DLMF2011.08.29})
\begin{equation*}
\begin{split}
\mathcal{K}_\beta( \PT{x}, -\PT{x} ) 
&= \frac{\gammafcn((d+1)/2)}{\sqrt{\pi} \, \gammafcn(d/2)} \frac{2\beta-1}{\beta} \Hypergeom{2}{1}{1,2\beta}{\beta+1}{\frac{1}{2}} = \frac{\gammafcn(3/2)}{\sqrt{\pi} \, \gammafcn(1)} \frac{2\beta-1}{\beta} \sqrt{\pi} \frac{\gammafcn(\beta+1)}{\gammafcn(1) \gammafcn( \beta+1/2)} \\
&= \sqrt{\pi} \frac{\gammafcn(\beta)}{\gammafcn( \beta-1/2)}.
\end{split}
\end{equation*}

{\bf Integral representation.} Let $\PT{x}, \PT{y} \in \mathbb{S}^d$ with $\PT{x} \neq \PT{y}$.

We introduce cylinder coordinates with respect to the Pole $\PT{p} = ( \PT{x} - \PT{y} ) / \| \PT{x} - \PT{y} \|$. Then
\begin{align*}
\PT{x} \cdot \PT{p} &= \frac{1 - \PT{x} \cdot \PT{y}}{\| \PT{x} - \PT{y} \|} = \frac{1}{2} \frac{2 \left( 1 - \PT{x} \cdot \PT{y} \right)}{\| \PT{x} - \PT{y} \|} =  \frac{1}{2} \frac{\| \PT{x} - \PT{y} \|^2}{\| \PT{x} - \PT{y} \|} = \frac{1}{2} \left\| \PT{x} - \PT{y} \right\| \FED v, \\
\PT{y} \cdot \PT{p} &= \frac{\PT{y} \cdot \PT{x} - 1}{\| \PT{x} - \PT{y} \|} = - v
\end{align*}
and one can write
\begin{align*}
\PT{x} &= ( \sqrt{1 - v^2} \, \PT{x}^*, v), \qquad \PT{x}^* \in \mathbb{S}^{d-1}, \\
\PT{y} &= ( \sqrt{1 - v^2} \, \PT{y}^*, -v), \qquad \PT{y}^* \in \mathbb{S}^{d-1}, \\
\PT{z} &= ( \sqrt{1 - u^2} \, \PT{z}^*, u), \qquad \PT{z}^* \in \mathbb{S}^{d-1}, -1 \leq u \leq 1.
\end{align*}
By construction of these cylinder coordinates there holds that $\PT{x}^* = \PT{y}^*$. It follows that
\begin{align*}
\PT{x} \cdot \PT{z} &= u v + \sqrt{1 - v^2} \, \sqrt{1 - u^2} \, \PT{x}^* \cdot \PT{z}^*, \\
\PT{y} \cdot \PT{z} &= - u v + \sqrt{1 - v^2} \, \sqrt{1 - u^2} \, \PT{x}^* \cdot \PT{z}^*.
\end{align*}
From the definitions of the these inner products there follows that 
\begin{equation*}
\PT{x} \cdot \PT{z}_{(-u,\PT{z}^*)} = \PT{y} \cdot \PT{z}_{(u,\PT{z}^*)}, \qquad \text{for all $-1 \leq u \leq 1$ and $\PT{z}^* \in \mathbb{S}^{d-1}$.}
\end{equation*}

Consequently
\begin{equation*}
\PT{x} \cdot \PT{z} \geq \PT{y} \cdot \PT{z} \qquad \text{iff} \qquad u v \geq - u v \qquad \text{iff} \qquad 2 u v \geq 0 \qquad \text{iff} \qquad u \geq 0.
\end{equation*}
In other words, one has $\PT{x} \cdot \PT{z} \geq \PT{y} \cdot \PT{z}$ for every $\PT{z}$ in the upper half sphere ($u \geq 0$) and one has $\PT{x} \cdot \PT{z} \leq \PT{y} \cdot \PT{z}$ for every $\PT{z}$ in the lower half sphere ($u \leq 0$). Hence, by Lemma~\ref{lem:aux.res.1}
\begin{align*}
H_\beta( \PT{x} \cdot \PT{z}, \PT{y} \cdot \PT{z} ) 
&= \frac{1}{\beta} \left( 1 + \min\{\PT{x} \cdot \PT{z}, \PT{y} \cdot \PT{z}\} \right) \left( 1 + \PT{x} \cdot \PT{z} \right)^{\beta - 1} \left( 1 + \PT{y} \cdot \PT{z} \right)^{\beta - 1} \\
&\phantom{=\pm}\times \Hypergeom{2}{1}{1-\beta,1}{1+\beta}{\frac{1+\min\{\PT{x} \cdot \PT{z},\PT{y} \cdot \PT{z}\}}{1+\max\{\PT{x} \cdot \PT{z},\PT{y} \cdot \PT{z}\}}} \\
&= \frac{1}{\beta} \left( 1 + \PT{x} \cdot \PT{z} \right)^{\beta - 1} \left( 1 + \PT{y} \cdot \PT{z} \right)^{\beta} \Hypergeom{2}{1}{1-\beta,1}{1+\beta}{\frac{1+\PT{y} \cdot \PT{z}}{1+\PT{x} \cdot \PT{z}}}, \qquad u \geq 0, \\
H_\beta( \PT{x} \cdot \PT{z}, \PT{y} \cdot \PT{z} ) 
&= \frac{1}{\beta} \left( 1 + \PT{x} \cdot \PT{z} \right)^{\beta} \left( 1 + \PT{y} \cdot \PT{z} \right)^{\beta - 1} \Hypergeom{2}{1}{1-\beta,1}{1+\beta}{\frac{1+\PT{x} \cdot \PT{z}}{1+\PT{y} \cdot \PT{z}}}, \qquad u \leq 0.
\end{align*}

By the Funk-Hecke formula (note that $\PT{z}$ depends on $u$ and $\PT{z}^*$)
\begin{equation*}
\mathcal{K}_\beta( \PT{x}, \PT{y} ) = \frac{\omega_{d-1}}{\omega_d} \int_{-1}^1 \int_{\mathbb{S}^{d-1}} H_\beta( \PT{x} \cdot \PT{z}, \PT{y} \cdot \PT{z} ) \left( 1 - u^2 \right)^{d/2-1} \dd \sigma_{d-1}(\PT{z}^*) \dd u.
\end{equation*}
From the definitions of the inner products $\PT{x} \cdot \PT{z}$ and $\PT{y} \cdot \PT{z}$ it follows that $\PT{x} \cdot \PT{z}_{(-u,\PT{z}^*)} = \PT{y} \cdot \PT{z}_{(u,\PT{z}^*)}$. Hence
\begin{equation} \label{eq:cal.K.beta}
\mathcal{K}_\beta( \PT{x}, \PT{y} ) = 2 \frac{\omega_{d-1}}{\omega_d} \int_{0}^1 \int_{\mathbb{S}^{d-1}} H_\beta( \PT{x} \cdot \PT{z}, \PT{y} \cdot \PT{z} ) \left( 1 - u^2 \right)^{d/2-1} \dd \sigma_{d-1}(\PT{z}^*) \dd u.
\end{equation}

We compute the following integral
\begin{equation*}
h_\beta( u ) \DEF \int_{\mathbb{S}^{d-1}} H_\beta( \PT{x} \cdot \PT{z}, \PT{y} \cdot \PT{z} ) \dd \sigma_{d-1}(\PT{z}^*)
\end{equation*}
By our previous considerations (recall, that $\PT{z} = \PT{z}(u,\PT{z}^*)$)
\begin{equation*}
h_\beta( u ) = \int_{\mathbb{S}^{d-1}} \frac{1}{\beta} \left( 1 + \PT{x} \cdot \PT{z} \right)^{\beta - 1} \left( 1 + \PT{y} \cdot \PT{z} \right)^{\beta} \Hypergeom{2}{1}{1-\beta,1}{1+\beta}{\frac{1+\PT{y} \cdot \PT{z}}{1+\PT{x} \cdot \PT{z}}} \dd \sigma_{d-1}(\PT{z}^*), \quad u \geq 0.
\end{equation*}
We observe, that in the case $\PT{y} = - \PT{x}$, $\PT{x} \in \mathbb{S}^d$, the integrand above does not depend on $\PT{z}^*$; that is
\begin{equation} \label{eq:h.beta.special}
h_\beta( u ) \Big|_{\PT{y} = - \PT{x}} = \frac{1}{\beta} \left( 1 + u \right)^{\beta - 1} \left( 1 - u \right)^{\beta} \Hypergeom{2}{1}{1-\beta,1}{1+\beta}{\frac{1-u}{1+u}}, \qquad u \geq 0.
\end{equation}

For the transformation of variable in the hypergeometric functions above we have to distinguish between three cases: {\bf (a)} $2\beta-1$ is not a positive integer ($\beta > 1/2$ by assumption), {\bf (b)} $2\beta-1$ is an even positive integer, and {\bf (c)} $\beta$ is a positive integer. Further observe that 
\begin{equation*}
0 \leq \frac{1+\PT{y} \cdot \PT{z}}{1+\PT{x} \cdot \PT{z}} < 1 \qquad \text{for $0 < u \leq 1$.}
\end{equation*}
(The lower bound is assumed when $\PT{y} \cdot \PT{z} = - 1$; that is, if and only if $\PT{z} = ( - \sqrt{1-v^2} \PT{x}^*, v)$.) 

{\bf Case (a).} Let $2\beta-1$ be not a positive integer. First, let $\PT{y} = - \PT{x}$; that is, $v = 1$. Application of Lemma~\ref{lem:aux.res.3} to \eqref{eq:h.beta.special} and the quadratic transformation \cite[Eq.~15.8.1]{DLMF2011.08.29} gives
\begin{align}
h_\beta( u ) \Big|_{\PT{y} = - \PT{x}} 
&= \frac{\gammafcn( 1 - 2 \beta ) \gammafcn(\beta)}{\gammafcn(1 - \beta)} \left( 2 u \right)^{2\beta-1} + \frac{\left( 1 + u \right)^{2\beta - 1}}{2\beta-1} \Hypergeom{2}{1}{1-2\beta,1-\beta}{2-2\beta}{\frac{2u}{1+u}} \notag \\
&= \frac{\gammafcn( 1 - 2 \beta ) \gammafcn(\beta)}{\gammafcn(1 - \beta)} \left( 2 u \right)^{2\beta-1} + \frac{1}{2\beta-1} \Hypergeom{2}{1}{1/2-\beta,1-\beta}{3/2-\beta}{u^2}, \qquad u > 0. \label{eq:h.beta.anti.diagonal}
\end{align}

Hence, using the series expansion of hypergeometric functions
\begin{align*}
\mathcal{K}_\beta( \PT{x}, - \PT{x} ) 
&= 2 \frac{\omega_{d-1}}{\omega_d} \int_0^1 h_\beta( u ) \Big|_{\PT{y} = - \PT{x}} \left( 1 - u^2 \right)^{d/2-1} \dd u \\
&= 2^{2\beta-1} \frac{\gammafcn( 1 - 2 \beta ) \gammafcn(\beta)}{\gammafcn(1 - \beta)} 2 \frac{\omega_{d-1}}{\omega_d} \int_0^1 u^{2\beta-1} \left( 1 - u^2 \right)^{d/2-1} \dd u \\
&\phantom{=}+ \frac{1}{2\beta-1} 2 \frac{\omega_{d-1}}{\omega_d} \sum_{n=0}^\infty \frac{\Pochhsymb{1/2-\beta}{n} \Pochhsymb{1-\beta}{n}}{\Pochhsymb{3/2-\beta}{n} n!} \int_0^1 u^{2n} \left( 1 - u^2 \right)^{d/2-1} \dd u.
\end{align*}
The substitution $u = x^{1/2}$ in the integrals yields
\begin{equation} \label{eq:integral.aux.1}
\int_0^1 u^{2\alpha} \left( 1 - u^2 \right)^{d/2-1} \dd u = \frac{1}{2} \int_0^1 x^{\alpha-1/2} \left( 1 - x \right)^{d/2-1} \dd x = \frac{1}{2} \frac{\gammafcn( \alpha + 1/2 ) \gammafcn( d/2 )}{\gammafcn( \alpha + (d+1)/2 )}.
\end{equation}
Therefore (using \eqref{eq:C.d})
\begin{align*}
\mathcal{K}_\beta( \PT{x}, - \PT{x} ) 
&= 2^{2\beta-1} \frac{\gammafcn( 1 - 2 \beta ) \gammafcn(\beta)}{\gammafcn(1 - \beta)} \frac{\gammafcn((d+1)/2)}{\sqrt{\pi} \, \gammafcn(d/2)} \frac{\gammafcn( \beta ) \gammafcn( d/2 )}{\gammafcn( \beta + d/2 )} \\
&\phantom{=}+ \frac{1}{2\beta-1} \frac{\gammafcn((d+1)/2)}{\sqrt{\pi} \, \gammafcn(d/2)} \sum_{n=0}^\infty \frac{\Pochhsymb{1/2-\beta}{n} \Pochhsymb{1-\beta}{n}}{\Pochhsymb{3/2-\beta}{n} n!} \frac{\gammafcn( n + 1/2 ) \gammafcn( d/2 )}{\gammafcn( n + (d+1)/2 )} \\
&= 2^{2\beta-1} \frac{\gammafcn((d+1)/2)}{\sqrt{\pi}} \frac{\gammafcn( 1 - 2 \beta ) \gammafcn(\beta) \gammafcn( \beta )}{\gammafcn(1 - \beta) \gammafcn( \beta + d/2 )} \\
&\phantom{=}+ \frac{1}{2\beta-1} \sum_{n=0}^\infty \frac{\Pochhsymb{1/2-\beta}{n} \Pochhsymb{1-\beta}{n}}{\Pochhsymb{3/2-\beta}{n} n!} \frac{\Pochhsymb{1/2}{n}}{\Pochhsymb{(d+1)/2}{n}}.
\end{align*}
Comparison with \eqref{eq:cal.K.beta.1} below and changing to hypergeometric function yields
\begin{equation*}
\mathcal{K}_\beta( \PT{x}, - \PT{x} ) = 2^{2\beta-1} \tilde{c}_d(\beta) + \frac{1}{2\beta-1} \Hypergeom{3}{1}{1/2-\beta, 1 - \beta, 1/2}{3/2 - \beta, (d+1)/2}{1}.
\end{equation*}
Taking into account \eqref{eq:tilde.c.d} below, equation \eqref{eq:K.beta.x.neg.x} follows.

Let $\PT{y} \neq - \PT{x}$ and $\PT{y} \neq \PT{x}$; that is $0 < v < 1$. By Lemma~\ref{lem:aux.res.3}
\begin{equation}
\begin{split} \label{eq:inner.integral}
h_\beta( u ) 
&= \frac{\gammafcn( 1 - 2 \beta ) \gammafcn(\beta)}{\gammafcn(1 - \beta)} \int_{\mathbb{S}^{d-1}} \left( \PT{x} \cdot \PT{z} - \PT{y} \cdot \PT{z} \right)^{2\beta-1} \dd \sigma_{d-1}(\PT{z}^*) \\
&\phantom{=}+ \int_{\mathbb{S}^{d-1}} \frac{\left( 1 + \PT{x} \cdot \PT{z} \right)^{2\beta - 1}}{2\beta-1} \Hypergeom{2}{1}{1-\beta,1-2\beta}{2-2\beta}{\frac{\PT{x} \cdot \PT{z}-\PT{y} \cdot \PT{z}}{1+\PT{x} \cdot \PT{z}}} \dd \sigma_{d-1}(\PT{z}^*), \quad u > 0.
\end{split}
\end{equation}

Because of our parameterization, the first integral can be evaluated as follows:
\begin{align*}
h_{\beta,1}( u ) \Bigg/ \frac{\gammafcn( 1 - 2 \beta ) \gammafcn(\beta)}{\gammafcn(1 - \beta)} 
&\DEF \int_{\mathbb{S}^{d-1}} \left( \PT{x} \cdot \PT{z} - \PT{y} \cdot \PT{z} \right)^{2\beta-1} \dd \sigma_{d-1}(\PT{z}^*) \\
&= \left\| \PT{x} - \PT{y} \right\|^{2\beta-1} \int_{\mathbb{S}^{d-1}} \left( \frac{\PT{x} - \PT{y}}{\| \PT{x} - \PT{y} \|} \cdot \PT{z} \right)^{2\beta-1} \dd \sigma_{d-1}(\PT{z}^*) \\
&= \left\| \PT{x} - \PT{y} \right\|^{2\beta-1} \int_{\mathbb{S}^{d-1}} \left( \PT{p} \cdot \PT{z} \right)^{2\beta-1} \dd \sigma_{d-1}(\PT{z}^*) \\
&= \left\| \PT{x} - \PT{y} \right\|^{2\beta-1} \int_{\mathbb{S}^{d-1}} u^{2\beta-1} \dd \sigma_{d-1}(\PT{z}^*) \\
&= \left\| \PT{x} - \PT{y} \right\|^{2\beta-1} u^{2\beta-1}.
\end{align*}

The second integral is, in fact, well-defined for $\re \beta > 1/2$ and $2-2\beta$ not an integer $\leq 0$. Moreover, the hypergeometric function in the integrand has an integral representation for $1/2 < \re \beta < 1$ (cf. \cite[Eq.~15.6.1]{DLMF2011.08.29}). Let $1/2 < \re \beta < 1$. Thus, by the Funk-Hecke formula and the substitution $2 x = 1 - \tau$ 
\begin{align*}
h_{\beta,2}( u ) 
&\DEF \int_{\mathbb{S}^{d-1}} \frac{\left( 1 + \PT{x} \cdot \PT{z} \right)^{2\beta - 1}}{2\beta-1} \Hypergeom{2}{1}{1-\beta,1-2\beta}{2-2\beta}{\frac{\PT{x} \cdot \PT{z}-\PT{y} \cdot \PT{z}}{1+\PT{x} \cdot \PT{z}}} \dd \sigma_{d-1}(\PT{z}^*) \\
&= - \frac{\gammafcn( 1 - 2 \beta )}{\gammafcn( 1 - \beta ) \gammafcn( 1 - \beta )} \int_{\mathbb{S}^{d-1}} \int_0^1 t^{-\beta} \left( 1 - t \right)^{-\beta} \left( 1 + \PT{x} \cdot \PT{z} - 2 u v \, t \right)^{2\beta - 1} \dd t \dd \sigma_{d-1}(\PT{z}^*) \\
&= - \frac{\gammafcn( 1 - 2 \beta )}{\gammafcn( 1 - \beta ) \gammafcn( 1 - \beta )} \frac{\omega_{d-2}}{\omega_{d-1}} \int_{-1}^1 \int_0^1 \frac{t^{-\beta} \left( 1 - t \right)^{-\beta} \left( 1 - \tau^2 \right)^{(d-1)/2-1}}{\left( 1 + u v + \sqrt{1 - v^2} \, \sqrt{1 - u^2} \, \tau - 2 u v \, t \right)^{1 - 2\beta} } \dd t \dd \tau \\
&= \frac{1}{2\beta-1} \, \frac{\gammafcn( 2 - 2 \beta )}{\gammafcn( 1 - \beta ) \gammafcn( 1 - \beta )} 2^{d-2} \frac{\omega_{d-2}}{\omega_{d-1}} \left( 1 + u v + \sqrt{1 - v^2} \, \sqrt{1 - u^2} \right)^{2\beta-1} \\
&\phantom{=\pm}\times \int_0^1 \int_0^1 \frac{x^{(d-1)/2-1} t^{1-\beta-1} \left( 1 - x \right)^{(d-1)/2-1} \left( 1 - t \right)^{1-\beta-1}}{\left( 1 - w \, x - z \, t \right)^{1-2\beta}} \dd x \dd t,
\end{align*}
where we set 
\begin{equation*}
w = \frac{2 \sqrt{1 - v^2} \, \sqrt{1 - u^2}}{1 + u v + \sqrt{1 - v^2} \, \sqrt{1 - u^2}}, \qquad z = \frac{2uv}{1 + u v + \sqrt{1 - v^2} \, \sqrt{1 - u^2}}.
\end{equation*}
We note that 
\begin{equation*}
| w | + | z | = w + z = \frac{2 u v + 2 \sqrt{1 - v^2} \, \sqrt{1 - u^2}}{1 + u v + \sqrt{1 - v^2} \, \sqrt{1 - u^2}} < 1 \qquad \text{iff} \qquad \PT{x} \cdot \PT{z} < 1.
\end{equation*}
For $d\geq 2$ and $1/2 < \re \beta < 1$ the right-most integral above represents an Appell $\HyperF_2$-function (cf. \cite[Eq.~16.15.2]{DLMF2011.08.29}). Moreover, invoking \eqref{eq:C.d}, we see that 
\begin{align*}
h_{\beta,2}( u ) 
&= \frac{1}{2\beta-1} \frac{\gammafcn( 2 - 2 \beta )}{\gammafcn( 1 - \beta ) \gammafcn( 1 - \beta )} 2^{d-2} \frac{\gammafcn(d/2)}{\sqrt{\pi} \, \gammafcn((d-1)/2)} \left( 1 + u v + \sqrt{1 - v^2} \, \sqrt{1 - u^2} \right)^{2\beta-1} \\
&\phantom{=\pm}\times \frac{\gammafcn((d-1)/2) \gammafcn(1-\beta) \gammafcn((d-1)/2) \gammafcn(1-\beta)}{\gammafcn(d-1) \gammafcn(2-2\beta)} \, \AppellFtwo{1-2\beta, (d-1)/2, 1 - \beta}{d-1, 2 - 2 \beta}{w,z}.
\end{align*}
Simplification gives
\begin{equation*}
h_{\beta,2}( u ) = \frac{1}{2\beta-1} \left( 1 + u v + \sqrt{1 - v^2} \, \sqrt{1 - u^2} \right)^{2\beta-1} \AppellFtwo{1-2\beta, (d-1)/2, 1 - \beta}{d-1, 2 - 2 \beta}{w,z}.
\end{equation*}
By analytic continuation this identity holds for all $\beta > 1/2$ except $\beta$ is a positive integer. (It is assumed that $\PT{x} \cdot \PT{z} < 1$.)

Letting $a = 2 \sqrt{1 - v^2} \, \sqrt{1 - u^2}$, $b = 2 uv$, $A = 1 + u v + \sqrt{1 - v^2} \, \sqrt{1 - u^2}$ and observing that $2 A - a - b = 2$, Proposition~\ref{prop:Appell.F.identity} yields the following representation in terms of an Appell $\HyperF_4$ function:
\begin{equation} \label{eq:h.beta.2}
h_{\beta,2}( u ) = \frac{1}{2\beta-1} \AppellFfour{1/2-\beta,1-\beta}{d/2,3/2-\beta}{\left( 1 - v^2 \right) \left( 1 - u^2 \right), v^2 u^2}.
\end{equation}

Putting everything together, we arrive at
\begin{equation*}
\begin{split}
h_\beta( u ) 
&= \frac{\gammafcn( 1 - 2 \beta ) \gammafcn(\beta)}{\gammafcn(1 - \beta)} \left\| \PT{x} - \PT{y} \right\|^{2\beta-1} u^{2\beta-1} \\
&\phantom{=}+ \frac{1}{2\beta-1} \AppellFfour{1/2-\beta,1-\beta}{d/2,3/2-\beta}{\left( 1 - v^2 \right) \left( 1 - u^2 \right), v^2 u^2}.
\end{split}
\end{equation*}

Substitution into \eqref{eq:cal.K.beta} yields
\begin{equation*}
\mathcal{K}_\beta( \PT{x}, \PT{y} ) = \mathcal{K}_{\beta,1}( \PT{x}, \PT{y} ) + \mathcal{K}_{\beta,2}( \PT{x}, \PT{y} ),
\end{equation*}
where (using \eqref{eq:C.d} and functional relations for the gamma function)
\begin{align}
\mathcal{K}_{\beta,1}( \PT{x}, \PT{y} ) 
&= 2 \frac{\omega_{d-1}}{\omega_d} \frac{\gammafcn( 1 - 2 \beta ) \gammafcn(\beta)}{\gammafcn(1 - \beta)} \left\| \PT{x} - \PT{y} \right\|^{2\beta-1} \int_{0}^1 u^{2\beta-1} \left( 1 - u^2 \right)^{d/2-1} \dd u \notag \\
&= 2 \frac{\omega_{d-1}}{\omega_d} \frac{\gammafcn( 1 - 2 \beta ) \gammafcn(\beta)}{\gammafcn(1 - \beta)} \left\| \PT{x} - \PT{y} \right\|^{2\beta-1} \frac{1}{2} \int_0^1 x^{\beta-1} \left( 1 - x \right)^{d/2-1} \dd x \notag \\
&= \frac{\gammafcn((d+1)/2)}{\sqrt{\pi} \, \gammafcn(d/2)} \frac{\gammafcn( 1 - 2 \beta ) \gammafcn(\beta)}{\gammafcn(1 - \beta)} \frac{\gammafcn(\beta) \gammafcn(d/2)}{\gammafcn(\beta+d/2)} \left\| \PT{x} - \PT{y} \right\|^{2\beta-1} \notag \\
&= \frac{\gammafcn((d+1)/2)}{\sqrt{\pi}} \frac{\gammafcn( 1 - 2 \beta )}{\gammafcn(1 - \beta)} \frac{\gammafcn(\beta)\gammafcn(\beta)}{\gammafcn(\beta+d/2)} \left\| \PT{x} - \PT{y} \right\|^{2\beta-1} \notag \\
&= \tilde{c}_d(\beta) \left\| \PT{x} - \PT{y} \right\|^{2\beta-1}, \qquad \tilde{c}_d(\beta) \DEF 2^{-2\beta} \frac{\gammafcn((d+1)/2)}{\pi} \frac{\gammafcn(1/2-\beta) \gammafcn(\beta)\gammafcn(\beta)}{\gammafcn(\beta+d/2)}, \label{eq:cal.K.beta.1}
\end{align}
and (recall \eqref{eq:inner.integral})
\begin{equation*}
\begin{split}
\mathcal{K}_{\beta,2}( \PT{x}, \PT{y} ) 
&= 2 \frac{\omega_{d-1}}{\omega_d} \int_0^1 \int_{\mathbb{S}^{d-1}} \frac{\left( 1 + \PT{x} \cdot \PT{z} \right)^{2\beta - 1}}{2\beta-1} \Hypergeom{2}{1}{1-\beta,1-2\beta}{2-2\beta}{\frac{\PT{x} \cdot \PT{z}-\PT{y} \cdot \PT{z}}{1+\PT{x} \cdot \PT{z}}} \\
&\phantom{=\pm}\times \left( 1 - u^2 \right)^{d/2-1} \dd \sigma_{d-1}(\PT{z}^*) \dd t.
\end{split}
\end{equation*}
A linear transformation of variable for hypergeometric functions \cite[Eq.~15.8.1]{DLMF2011.08.29} yields
\begin{equation*}
\begin{split}
&\frac{\left( 1 + \PT{x} \cdot \PT{z} \right)^{2\beta - 1}}{2\beta-1} \Hypergeom{2}{1}{1-\beta,1-2\beta}{2-2\beta}{\frac{\PT{x} \cdot \PT{z}-\PT{y} \cdot \PT{z}}{1+\PT{x} \cdot \PT{z}}} \\
&\phantom{equals}= \frac{\left( 1 + \PT{y} \cdot \PT{z} \right)^{2\beta - 1}}{2\beta-1} \Hypergeom{2}{1}{1-\beta,1-2\beta}{2-2\beta}{\frac{\PT{y} \cdot \PT{z}-\PT{x} \cdot \PT{z}}{1+\PT{y} \cdot \PT{z}}}.
\end{split}
\end{equation*}
Invoking the fact that $\PT{x} \cdot \PT{z}_{(-u,\PT{z}^*)} = \PT{y} \cdot \PT{z}_{(u,\PT{z}^*)}$ for all $-1 \leq u \leq 1$, we arrive at
\begin{equation*}
\begin{split}
\mathcal{K}_{\beta,2}( \PT{x}, \PT{y} ) 
&= \frac{\omega_{d-1}}{\omega_d} \int_{-1}^1 \int_{\mathbb{S}^{d-1}} \frac{\left( 1 + \PT{x} \cdot \PT{z} \right)^{2\beta - 1}}{2\beta-1} \Hypergeom{2}{1}{1-\beta,1-2\beta}{2-2\beta}{\frac{\PT{x} \cdot \PT{z}-\PT{y} \cdot \PT{z}}{1+\PT{x} \cdot \PT{z}}} \\
&\phantom{=\pm}\times \left( 1 - u^2 \right)^{d/2-1} \dd \sigma_{d-1}(\PT{z}^*) \dd t.
\end{split}
\end{equation*}
Reversing application of the Funk-Hecke formula yields
\begin{equation}
\mathcal{K}_{\beta,2}( \PT{x}, \PT{y} ) = \int_{\mathbb{S}^d} \frac{\left( 1 + \PT{x} \cdot \PT{z} \right)^{2\beta - 1}}{2\beta-1} \Hypergeom{2}{1}{1-\beta,1-2\beta}{2-2\beta}{\frac{\PT{x} \cdot \PT{z}-\PT{y} \cdot \PT{z}}{1+\PT{x} \cdot \PT{z}}} \dd \sigma_{d}(\PT{z}).
\end{equation}
The quadratic transformation of variable \cite[Eq.~15.8.1]{DLMF2011.08.29} and a change of variable $\PT{z} \mapsto - \PT{z}$ give the symmetric form
\begin{equation} \label{eq:K.beta.2.symmetric.form}
\mathcal{K}_{\beta,2}( \PT{x}, \PT{y} ) = \frac{2^{1-2\beta}}{2\beta-1} \int_{\mathbb{S}^d} \frac{\Hypergeom{2}{1}{1/2-\beta,1-\beta}{3/2-\beta}{\left( \frac{\PT{x} \cdot \PT{z}-\PT{y} \cdot \PT{z}}{2 - \PT{x} \cdot \PT{z} - \PT{y} \cdot \PT{z}} \right)^2}}{\left( 2 - \PT{x} \cdot \PT{z} - \PT{y} \cdot \PT{z} \right)^{1-2\beta}} \dd \sigma_{d}(\PT{z}).
\end{equation}

The constant in front of the Euclidean distance in \eqref{eq:cal.K.beta.1} can be rewritten as follows. Let $L = \lfloor \beta - 1/2 \rfloor$ be the largest integer $\leq \beta - 1/2$. Then $\beta - 1/2 = L + \eps$ for some $0 \leq \eps < 1$. Application of the reflection formula for the Gamma function gives
\begin{equation*}
\gammafcn(1/2-\beta) = \frac{\gammafcn(\beta+1/2) \gammafcn(1/2-\beta)}{\gammafcn(\beta+1/2)} = \frac{1}{\gammafcn(\beta+1/2)} \frac{\pi}{\sin( \pi ( \beta+1/2 ) )} = \frac{(-1)^{L+1}}{\gammafcn(\beta+1/2)} \frac{\pi}{\sin( \pi \eps )}
\end{equation*}
and therefore
\begin{equation} \label{eq:tilde.c.d}
\tilde{c}_d(\beta) = \frac{(-1)^{L+1}}{\sin( \pi (\beta-1/2-L) )} \frac{2^{1-2\beta}}{2\beta-1} \frac{\gammafcn((d+1)/2) \gammafcn(\beta)\gammafcn(\beta)}{\gammafcn(\beta+d/2) \gammafcn(\beta-1/2)}.
\end{equation}

Using the series representation of the Appell $\HyperF_4$ function (cf. Proof of Proposition~\ref{prop:Appell.F.identity}) in \eqref{eq:h.beta.2}, we get 
\begin{align*}
\mathcal{K}_{\beta,2}( \PT{x}, \PT{y} )
&= \frac{2}{2\beta-1} \frac{\omega_{d-1}}{\omega_d} \int_0^1 h_{\beta,2}(u) \left( 1 - u^2 \right)^{d/2-1} \dd u \\
&= \frac{2}{2\beta-1} \frac{\omega_{d-1}}{\omega_d} \sum_{m=0}^\infty \sum_{n=0}^\infty \frac{\Pochhsymb{1/2 - \beta}{m+n} \Pochhsymb{1-\beta}{m+n}}{\Pochhsymb{d/2}{m} \Pochhsymb{3/2-\beta}{n} \, m! n!} \left( 1 - v^2 \right)^m v^{2n} \, b_{m,n},
\end{align*}
where
\begin{equation*}
b_{m,n} \DEF \int_0^1 u^{2n} \left( 1 - u^2 \right)^{m+d/2-1} \dd u.
\end{equation*}
The change of variable $x = u^2$ gives a beta integral. Hence
\begin{align*}
b_{m,n} 
&= \frac{1}{2} \int_0^1 x^{n+1/2-1} \left( 1 - x \right)^{m+d/2-1} \dd x = \frac{1}{2} \betafcn(n+1/2,m+d/2) \\
&= \frac{1}{2} \frac{\gammafcn(n+1/2) \gammafcn(m+d/2)}{\gammafcn(m+n+(d+1)/2)} = \frac{1}{2} \frac{\gammafcn(1/2) \gammafcn(d/2)}{\gammafcn((d+1)/2)} \frac{\Pochhsymb{1/2}{n} \Pochhsymb{d/2}{m}}{\Pochhsymb{(d+1)/2}{m+n}}.
\end{align*}
Taking into account \eqref{eq:C.d}, we arrive at
\begin{equation} \label{eq:K.beta.2.first}
\mathcal{K}_{\beta,2}( \PT{x}, \PT{y} ) = \frac{1}{2\beta-1} \sum_{m=0}^\infty \sum_{n=0}^\infty \frac{\Pochhsymb{1/2 - \beta}{m+n} \Pochhsymb{1-\beta}{m+n} \Pochhsymb{1/2}{n}}{\Pochhsymb{(d+1)/2}{m+n} \Pochhsymb{3/2-\beta}{n} \, m! n!} \left( 1 - v^2 \right)^m v^{2n}.
\end{equation}
The double series expansion has the structure given in \eqref{eq:Kampe.de.Feriet.function}. (This gives the first part of the theorem (for the case $2\beta - 1$ is not an integer $\geq 0$).)

Reordering the sum with respect to constant $m+n$ gives
\begin{equation} \label{eq:K.beta.2.2ndform}
\mathcal{K}_{\beta,2}( \PT{x}, \PT{y} ) = \frac{1}{2\beta-1} \sum_{n=0}^\infty \frac{\Pochhsymb{1/2 - \beta}{n} \Pochhsymb{1-\beta}{n}}{\Pochhsymb{(d+1)/2}{n} \, n!} \left[ \sum_{n = 0}^n \binom{n}{n} \frac{\Pochhsymb{1/2}{n}}{\Pochhsymb{3/2-\beta}{n}} v^{2n} \left( 1 - v^2 \right)^{n -n} \right].
\end{equation}
Using \cite[Eq.s~15.8.1 and 15.8.7]{DLMF2011.08.29}, the square-bracketed equals
\begin{equation*}
\left( 1 - v^2 \right)^n \Hypergeom{2}{1}{-n,1/2}{3/2-\beta}{\frac{v^2}{v^2-1}} = \Hypergeom{2}{1}{-n,1-\beta}{3/2-\beta}{v^2} = \frac{\Pochhsymb{1/2}{n}}{\Pochhsymb{3/2-\beta}{n}} \Hypergeom{2}{1}{-n, 1 - \beta}{1/2-n}{1-v^2}.
\end{equation*}

By definition of $v$, we obtain
\begin{equation} \label{eq:K.beta.2.second}
\mathcal{K}_{\beta,2}( \PT{x}, \PT{y} ) = \frac{1}{2\beta-1} \sum_{n=0}^\infty \frac{\Pochhsymb{1/2 - \beta}{n} \Pochhsymb{1-\beta}{n}}{\Pochhsymb{(d+1)/2}{n} \, n!} \Hypergeom{2}{1}{-n,1-\beta}{3/2-\beta}{\frac{1 - \PT{x} \cdot \PT{y}}{2}}.
\end{equation}

We conclude {\bf Case (a)} by deriving the expansion of the kernel $\mathcal{K}_\beta(\PT{x},\PT{y})$ in terms of spherical harmonics. Evidently, this kernel is a zonal function as it depends on the inner product $\PT{x} \cdot \PT{y}$. Thus, the expansion takes on the form of a series in terms of ultraspherical polynomials. Since the expansion of the (signed) power of the Euclidean distance is well-known, we focus here on the expansion of $\mathcal{K}_{\beta,2}( \PT{x}, \PT{y} )$.

By Proposition~\ref{prop:hypergeometric.polynomial.expansion.specialized}
\begin{align*}
\mathcal{K}_{\beta,2}( \PT{x}, \PT{y} ) 
&= \frac{1}{2\beta-1} \sum_{n=0}^\infty \sum_{k=0}^n \frac{\Pochhsymb{1/2 - \beta}{n} \Pochhsymb{1-\beta}{n}}{\Pochhsymb{(d+1)/2}{n} \, n!} \frac{a_{k,n}}{Z(d,k)} \, Z(d,k) P_k^{(d)}(\PT{x} \cdot \PT{y}) \\
&= \sum_{k=0}^{\infty} A_{k}^{(2)}(s,d) \, Z(d,k) P_k^{(d)}(\PT{x} \cdot \PT{y}),
\end{align*}
where (using $\beta$ where it is convenient)
\begin{align*}
A_{k}^{(2)}(s,d)
&\DEF \frac{1}{2\beta-1} \sum_{n=k}^\infty \frac{\Pochhsymb{1/2 - \beta}{n} \Pochhsymb{1-\beta}{n}}{\Pochhsymb{(d+1)/2}{n} \, n!} \frac{a_{k,n}}{Z(d,k)}
\end{align*}
and 
\begin{equation*}
\frac{a_{k,n}}{Z(d,k)} = \frac{n! \Pochhsymb{1-\beta}{k}\Pochhsymb{d/2}{k}}{\Pochhsymb{3/2-\beta}{k}\Pochhsymb{d}{2k} (n-k)!} \Hypergeom{3}{2}{k-n,k+1-\beta,k+d/2}{k+3/2-\beta,2k+d}{1}.
\end{equation*}

Let $k_0$ be the smallest integer with $k_0+1-\beta > 0$. Then we can use the integral representation of $a_{k,n}/Z(d,k)$ given in Proposition~\ref{prop:hypergeometric.polynomial.expansion.specialized} if $k \geq k_0$. Let $k \geq k_0$. Then
\begin{align*}
A_{k}^{(2)}(s,d)
&\DEF \frac{1}{2\beta-1} \sum_{n=k}^\infty \frac{\Pochhsymb{1/2 - \beta}{n} \Pochhsymb{1-\beta}{n}}{\Pochhsymb{(d+1)/2}{n} \, n!} \frac{a_{k,n}}{Z(d,k)} \\
&= \frac{1}{2\beta-1} \frac{\gammafcn(3/2 - \beta) \gammafcn(d)}{\gammafcn(1 - \beta) \gammafcn(1/2) \gammafcn(d/2) \gammafcn(d/2)} \sum_{n=k}^\infty \frac{\Pochhsymb{1/2 - \beta}{n} \Pochhsymb{1-\beta}{n}}{\Pochhsymb{(d+1)/2}{n} \, n!} \frac{n!}{\Pochhsymb{d/2}{k} (n-k)!} \\
&\phantom{=\pm}\times \int_0^1 \int_0^1 t^{k + 1 - \beta-1} x^{k + d/2-1} \left( 1 - t \right)^{1/2-1} \left( 1 - x \right)^{k + d/2-1} \left( 1 - x t \right)^{n-k} \dd x \dd t \\
&= \frac{1}{2\beta-1} \frac{\gammafcn(3/2 - \beta) \gammafcn(d)}{\gammafcn(1 - \beta) \gammafcn(1/2) \gammafcn(d/2) \gammafcn(d/2)} \frac{1}{\Pochhsymb{d/2}{k}} \sum_{n=k}^\infty \frac{\Pochhsymb{1/2 - \beta}{n} \Pochhsymb{1-\beta}{n}}{\Pochhsymb{(d+1)/2}{n} \, (n-k)!} \\
&\phantom{=\pm}\times \int_0^1 \int_0^1 t^{k + 1 - \beta-1} x^{k + d/2-1} \left( 1 - t \right)^{1/2-1} \left( 1 - x \right)^{k + d/2-1} \left( 1 - x t \right)^{n-k} \dd x \dd t.
\end{align*}

We compute
\begin{align*}
&\sum_{n=k}^\infty \frac{\Pochhsymb{1/2 - \beta}{n} \Pochhsymb{1-\beta}{n}}{\Pochhsymb{(d+1)/2}{n} \, (n-k)!} \left( 1 - x t \right)^{n-k} = \sum_{n=0}^\infty \frac{\Pochhsymb{1/2 - \beta}{n+k} \Pochhsymb{1-\beta}{n+k}}{\Pochhsymb{(d+1)/2}{n+k} \, n!} \left( 1 - x t \right)^{n} \\
&\phantom{equals}= \frac{\Pochhsymb{1/2 - \beta}{k} \Pochhsymb{1-\beta}{k}}{\Pochhsymb{(d+1)/2}{k}} \sum_{n=0}^\infty \frac{\Pochhsymb{k + 1/2 - \beta}{n} \Pochhsymb{k + 1-\beta}{n}}{\Pochhsymb{k + (d+1)/2}{n} \, n!} \left( 1 - x t \right)^{n} \\
&\phantom{equals}= \frac{\Pochhsymb{1/2 - \beta}{k} \Pochhsymb{1-\beta}{k}}{\Pochhsymb{(d+1)/2}{k}} \Hypergeom{2}{1}{k+1/2-\beta, k+1-\beta}{k+(d+1)/2}{1 - x t} \\
&\phantom{equals}= \frac{\Pochhsymb{1/2 - \beta}{k} \Pochhsymb{1-\beta}{k}}{\Pochhsymb{(d+1)/2}{k}} \left( x t \right)^{2\beta-1-k+d/2} \Hypergeom{2}{1}{\beta+d/2, \beta+(d-1)/2}{k+(d+1)/2}{1 - x t}.
\end{align*}
Hence, for $k + 1 - d/2 - 2 \beta > 0$ (and $k > \beta - 1$), we have
\begin{equation}
\begin{split} \label{eq:A.k.beta.first}
A_{k}^{(2)}(s,d)
&= - \frac{1}{2} \frac{\gammafcn(d/2-s) \gammafcn(d)}{\gammafcn((d+1)/2-s) \gammafcn(1/2) \gammafcn(d/2) \gammafcn(d/2)} \frac{\Pochhsymb{d/2 - s}{k} \Pochhsymb{(d+1)/2-s}{k}}{\Pochhsymb{(d+1)/2}{k}\Pochhsymb{d/2}{k}} \\
&\times \Bigg\{ \sum_{n=0}^\infty \frac{\Pochhsymb{s+1/2}{n} \Pochhsymb{s}{n}}{\Pochhsymb{k+(d+1)/2}{n} \, n!} \int_0^1 \int_0^1 \frac{t^{s+1/2-1} x^{2s-1} \left( 1 - t \right)^{1/2-1} \left( 1 - x \right)^{k + d/2-1}}{\left( 1 - x t \right)^{-n}} \dd x \dd t \Bigg\}. 
\end{split}
\end{equation}
Invoking integral representations for the hypergeometric $_2\HyperF_1$ and $_3\HyperF_2$ functions and a transformation for $_3\HyperF_2$ functions of unity argument (cf. \cite[Eq.~7.4.4.83]{PrBrMa1990III}), we arrive at
\begin{align*}
&\int_0^1 \int_0^1 \frac{t^{s+1/2-1} x^{2s-1} \left( 1 - t \right)^{1/2-1} \left( 1 - x \right)^{k + d/2-1}}{\left( 1 - x t \right)^{-n}} \dd x \dd t \\
&\phantom{equals}= \frac{\gammafcn(2s) \gammafcn(k+d/2)}{\gammafcn(k+d/2+2s)} \int_0^1 t^{s+1/2-1} \left( 1 - t \right)^{1/2-1} \Hypergeom{2}{1}{-n,2s}{k+d/2+2s}{t} \dd t \\
&\phantom{equals}= \frac{\gammafcn(s+1/2) \gammafcn(1/2)}{\gammafcn(s+1)} \frac{\gammafcn(2s) \gammafcn(k+d/2)}{\gammafcn(k+d/2+2s)} \Hypergeom{3}{2}{-n,s+1/2,2s}{s+1,k+d/2+2s}{1} \\
&\phantom{equals}= \frac{\gammafcn(s+1/2) \gammafcn(1/2)}{\gammafcn(s+1)} \frac{\gammafcn(2s) \gammafcn(k+d/2)}{\gammafcn(k+d/2+2s)} \frac{\Pochhsymb{k+(d+1)}{n}}{\Pochhsymb{k+d/2+2s}{n}} \Hypergeom{3}{2}{-n,1-s,1/2}{1+s,k+(d+1)/2}{1}.
\end{align*}

Substitution into \eqref{eq:A.k.beta.first} yields
\begin{equation}
\begin{split} \label{eq:A.k.beta.second}
A_{k}^{(2)}(s,d)
&= - \frac{1}{2} \frac{\gammafcn(d) \gammafcn(d/2-s) \gammafcn(s+1/2) \gammafcn(2s)}{\gammafcn(d/2) \gammafcn((d+1)/2-s) \gammafcn(s+1) \gammafcn(d/2+2s)} \frac{\Pochhsymb{d/2 - s}{k} \Pochhsymb{(d+1)/2-s}{k}}{\Pochhsymb{(d+1)/2}{k} \Pochhsymb{d/2+2s}{k}} \\
&\times \Bigg\{ \sum_{n=0}^\infty \frac{\Pochhsymb{s+1/2}{n} \Pochhsymb{s}{n}}{\Pochhsymb{k+d/2+2s}{n} \, n!} \Hypergeom{3}{2}{-n,1-s,1/2}{1+s,k+(d+1)/2}{1} \Bigg\}. 
\end{split}
\end{equation}

For asymptotic analysis of $A_{k}^{(2)}(s,d)$ for large $k$ we reverse to \eqref{eq:A.k.beta.first}. The infinite series is already in a hybrid-asymptotic form (as $k$ becomes large). For the curly braced expression we can, thus, write
\begin{align*}
\Bigg\{ \cdots \Bigg\} 
&= \int_0^1 \int_0^1 t^{s+1/2-1} x^{2s-1} \left( 1 - t \right)^{1/2-1} \left( 1 - x \right)^{k + d/2-1} \dd x \dd t + \frac{\left( s + 1/2 \right) s}{k + (d+1)/2} \Omega(k) \\
&= \frac{\gammafcn(s+1/2) \gammafcn(1/2)}{\gammafcn(s+1)} \frac{\gammafcn(2s) \gammafcn(k+d/2)}{\gammafcn(k+d/2+2s)} + \frac{\left( s + 1/2 \right) s}{k + (d+1)/2} \Omega(k),
\end{align*}
where
\begin{equation*}
\Omega(k) \DEF \sum_{n=0}^\infty \frac{\Pochhsymb{1+s+1/2}{n} \Pochhsymb{1+s}{n}}{\Pochhsymb{1+k+(d+1)/2}{n} \, (n+1)!} \int_0^1 \int_0^1 \frac{t^{s+1/2-1} x^{2s-1} \left( 1 - t \right)^{1/2-1} \left( 1 - x \right)^{k + d/2-1}}{\left( 1 - x t \right)^{-n-1}} \dd x \dd t.
\end{equation*}
Since $(1 - x t)^{n+1} \leq 1$ and $(n+1)! > n!$, we have the estimate
\begin{equation*}
0 < \Omega(k) \leq \frac{\gammafcn(s+1/2) \gammafcn(1/2)}{\gammafcn(s+1)} \frac{\gammafcn(2s) \gammafcn(k+d/2)}{\gammafcn(k+d/2+2s)} \Hypergeom{2}{1}{1+s+1/2,1+s}{1+k+(d+1)/2}{1}.
\end{equation*}
Observe that this hypergeometric function at unity argument decreases as $k$ gets larger. For $k > 2s + 1 - d/2$ we can apply \cite[Eq.~15.4.20]{DLMF2011.08.29} to evaluate the hypergeometric function at argument unity:
\begin{equation*}
0 < \Omega(k) \leq \frac{\gammafcn(s+1/2) \gammafcn(1/2)}{\gammafcn(s+1)} \frac{\gammafcn(2s) \gammafcn(k+d/2)}{\gammafcn(k+d/2+2s)} \frac{\gammafcn(1+k+(d+1)/2) \gammafcn(k+d/2-1-2s)}{\gammafcn(k+d/2-s)\gammafcn(k+(d+1)/2-s)}.
\end{equation*}
We conclude that
\begin{equation*}
\Bigg\{ \cdots \Bigg\} = \frac{\gammafcn(s+1/2) \gammafcn(1/2)}{\gammafcn(s+1)} \frac{\gammafcn(2s) \gammafcn(k+d/2)}{\gammafcn(k+d/2+2s)} \left( 1 + \mathcal{O}(k^{-1}) \right) \qquad \text{as $k \to \infty$.}
\end{equation*}
Hence, 
\begin{align*}
\frac{\Pochhsymb{d/2 - s}{k} \Pochhsymb{(d+1)/2-s}{k}}{\Pochhsymb{(d+1)/2}{k}\Pochhsymb{d/2}{k}} \Bigg\{ \cdots \Bigg\} 
&= \frac{\gammafcn( (d+1)/2 ) \gammafcn( d/2 )\gammafcn(s+1/2) \gammafcn(1/2) \gammafcn(2s)}{\gammafcn( d/s - s) \gammafcn( (d + 1)/2 - s) \gammafcn(s+1)} \\
&\phantom{=\pm}\times \frac{\gammafcn( k + d/s - s) \gammafcn( k + (d + 1)/2 - s)}{\gammafcn( k + (d+1)/2 ) \gammafcn(k+d/2+2s)} \left( 1 + \mathcal{O}(k^{-1}) \right).
\end{align*}
Putting everything together, we obtain
\begin{equation*}
\begin{split}
A_{k}^{(2)}(s,d)
&= - \frac{1}{2} \frac{\gammafcn( (d+1)/2 ) \gammafcn( d/2 ) \gammafcn(d) \gammafcn(s+1/2) \gammafcn(2s) \gammafcn(s+1/2) \gammafcn(1/2) \gammafcn(2s)}{\gammafcn(d/2) \gammafcn((d+1)/2-s) \gammafcn(s+1) \gammafcn(d/2+2s) \gammafcn( (d + 1)/2 - s) \gammafcn(s+1)} \\
&\phantom{=\pm}\times \frac{\gammafcn( k + d/s - s) \gammafcn( k + (d + 1)/2 - s)}{\gammafcn( k + (d+1)/2 ) \gammafcn(k+d/2+2s)} \left( 1 + \mathcal{O}(k^{-1}) \right), \qquad \text{as $k \to \infty$.}
\end{split}
\end{equation*}
Application of \cite[Eq.~5.11.12]{DLMF2011.08.29} yields that as $k \to \infty$
\begin{equation*}
A_{k}^{(2)}(s,d) \sim - \frac{1}{2} \frac{\gammafcn(1/2) \gammafcn( (d+1)/2 ) \gammafcn(d) \gammafcn(s+1/2) \gammafcn(s+1/2) \gammafcn(2s) \gammafcn(2s)}{\gammafcn((d+1)/2-s) \gammafcn( (d + 1)/2 - s) \gammafcn(d/2+2s) \gammafcn(s+1) \gammafcn(s+1)} \, k^{-4s}.
\end{equation*}
The duplication formula for the gamma function gives
\begin{equation} \label{eq:A.k.s.d}
A_{k}^{(2)}(s,d) \sim - 2^{d-2} \left[ \frac{\gammafcn( (d+1)/2 ) \gammafcn(s+1/2) \gammafcn(2s)}{\gammafcn( (d + 1)/2 - s) \gammafcn(s+1)} \right]^2 \frac{\gammafcn(d/2)}{\gammafcn(d/2+2s)} \, k^{-4s}.
\end{equation}
This completes the proof of Case {\bf (a)} of the Theorem~\ref{thm:integral.general.case}. 

{\bf Case (b)}. This is the content of Theorem~\ref{thm:integral.exceptional.cases}.

{\bf Case (c)}. Inspection of Case {\bf (a)} of this proof shows that it is save to take the limit as $\beta$ goes to a positive integer $M$ to obtain (cf., in particular, \eqref{eq:cal.K.beta.1} and \eqref{eq:tilde.c.d})
\begin{equation*}
\mathcal{K}_{M,1}( \PT{x}, \PT{y} ) = \lim_{\beta\to M} \mathcal{K}_{\beta,1}( \PT{x}, \PT{y} ) = \tilde{c}_d(M) \left\| \PT{x} - \PT{y} \right\|^{2M-1}, 
\end{equation*}
where (note that $L = M - 1$)
\begin{align*}
\tilde{c}_d(M) 
&= \frac{(-1)^{M}}{\sin( \pi (M-1/2-L) )} \frac{2^{1-2M}}{2M-1} \frac{\gammafcn((d+1)/2) \gammafcn(M)\gammafcn(M)}{\gammafcn(M+d/2) \gammafcn(M-1/2)} \\
&= (-1)^{M} \frac{\gammafcn((d+1)/2) \gammafcn(M)\gammafcn(M)}{\gammafcn(M+d/2) \gammafcn(M-1/2)}.
\end{align*}

Similarly (cf. \eqref{eq:K.beta.2.symmetric.form}, \eqref{eq:K.beta.2.first} and \eqref{eq:K.beta.2.second})
\begin{align}
\mathcal{K}_{M,2}( \PT{x}, \PT{y} )
&= \lim_{\beta\to M} \mathcal{K}_{\beta,2}( \PT{x}, \PT{y} ) \notag \\
&= \frac{2^{1-2M}}{2M-1} \int_{\mathbb{S}^d} \frac{\Hypergeom{2}{1}{1/2-M,1-M}{3/2-M}{\left( \frac{\PT{x} \cdot \PT{z}-\PT{y} \cdot \PT{z}}{2 - \PT{x} \cdot \PT{z} - \PT{y} \cdot \PT{z}} \right)^2}}{\left( 2 - \PT{x} \cdot \PT{z} - \PT{y} \cdot \PT{z} \right)^{1-2M}} \dd \sigma_{d}(\PT{z}) \notag \\
&= \frac{1}{2M-1} \sum_{m=0}^\infty \sum_{n=0}^\infty \frac{\Pochhsymb{1/2 - M}{m+n} \Pochhsymb{1-M}{m+n} \Pochhsymb{1/2}{n}}{\Pochhsymb{(d+1)/2}{m+n} \Pochhsymb{3/2-M}{n} \, m! n!} \left( 1 - v^2 \right)^m v^{2n} \notag \\
&= \frac{1}{2M-1} \sum_{n=0}^{M-1} \frac{\Pochhsymb{1/2 - M}{n} \Pochhsymb{1-M}{n}}{\Pochhsymb{(d+1)/2}{n} \, n!} \Hypergeom{2}{1}{-n,1-M}{3/2-M}{\frac{1 - \PT{x} \cdot \PT{y}}{2}}. \label{eq:K.beta.2..beta.EQ.integer}
\end{align}
(The last series terminates, since $\Pochhsymb{1-M}{n} = 0$ for $n \geq M$.)

Since $\mathcal{K}_{M,2}( \PT{x}, \PT{y} )$ is a polynomial of degree $M-1$ and Proposition~\ref{prop:hypergeometric.polynomial.expansion.specialized} applies if $\beta$ is a positive integer, we can simply write (cf. Proof of Theorem~\ref{thm:integral.general.case})
\begin{equation*}
\mathcal{K}_{M,2}( \PT{x}, \PT{y} ) = \sum_{k=0}^{M-1} A_{k}^{(2)}(s,d) \, Z(d,k) P_k^{(d)}(\PT{x} \cdot \PT{y}),
\end{equation*}
where (using $\beta$ where it is convenient)
\begin{equation} \label{eq:A.k.s.d.beta.EQ.M}
A_{k}^{(2)}(s,d) \DEF \frac{1}{2M-1} \sum_{n=k}^{M-1} \frac{\Pochhsymb{1/2 - M}{n} \Pochhsymb{1-M}{n}}{\Pochhsymb{(d+1)/2}{n} \, n!} \frac{a_{k,n}}{Z(d,k)}
\end{equation}
and 
\begin{equation*}
\frac{a_{k,n}}{Z(d,k)} = \frac{n! \Pochhsymb{1-M}{k}\Pochhsymb{d/2}{k}}{\Pochhsymb{3/2-M}{k}\Pochhsymb{d}{2k} (n-k)!} \Hypergeom{3}{1}{k-n,k+1-M,k+d/2}{k+3/2-M,2k+d}{1}.
\end{equation*}
(Note that the $_3\HyperF_2$ hypergeometric function above reduces to a polynomial of degree $M-1-k$.) 

Furthermore, the relations for $\mathcal{K}_\beta( \PT{x}, \PT{x} )$ and $\mathcal{K}_\beta( \PT{x}, -\PT{x} )$ also extend to the case when $\beta$ is a positive integer $M$. This completes the proof.
\end{proof}

For the proof of Theorem~\ref{thm:integral.exceptional.cases} we need the following auxiliary result.

\begin{lem} \label{lem:aux.res.4}
Let $d \geq 2$. Let $L$ be a positive integer, $n$ an integer $\geq0$ and $\eps$ a real number with $| \eps | < 1$ and $\eps \neq0$. Then for $\PT{x}, \PT{y} \in \mathbb{S}^d$ with $\PT{y} \neq \PT{x}$ and $\PT{y} \neq - \PT{x}$ there holds
\begin{equation*}
\begin{split}
&\int_{\mathbb{S}^d} \frac{\left( \frac{\PT{x} \cdot \PT{z}-\PT{y} \cdot \PT{z}}{2 - \PT{x} \cdot \PT{z} - \PT{y} \cdot \PT{z}} \right)^{2n}}{\left( 1 - \frac{1}{2} \PT{x} \cdot \PT{z} - \frac{1}{2} \PT{y} \cdot \PT{z} \right)^{-2L-2\eps}} \dd \sigma_{d}(\PT{z}) \\
&\phantom{equals}= \frac{\Pochhsymb{1/2}{n}}{\Pochhsymb{(d+1)/2}{n}} \, v^{2n} \Hypergeom{2}{1}{n-L-\eps,n-L+1/2-\eps}{n+(d+1)/2}{1 - v^2}.
\end{split}
\end{equation*}
\end{lem}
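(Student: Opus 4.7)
The strategy is to reduce the spherical integral to an explicit double integral by using the cylindrical coordinates already set up in the proof of Theorem~\ref{thm:integral.general.case}, and then evaluate the two resulting one-dimensional integrals in closed form.

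First I would recall from the proof of Theorem~\ref{thm:integral.general.case} the parametrization with pole $\PT{p} = (\PT{x}-\PT{y})/\|\PT{x}-\PT{y}\|$, so that $\PT{x} = (\sqrt{1-v^2}\,\PT{x}^*, v)$, $\PT{y} = (\sqrt{1-v^2}\,\PT{x}^*, -v)$ and $\PT{z} = (\sqrt{1-u^2}\,\PT{z}^*, u)$, where $v = \tfrac12\|\PT{x}-\PT{y}\|$ and $\PT{z}^* \in \mathbb{S}^{d-1}$. Setting $w \DEF \PT{x}^*\cdot \PT{z}^*$, a direct computation gives $\PT{x}\cdot\PT{z} - \PT{y}\cdot\PT{z} = 2uv$ and $2 - \PT{x}\cdot\PT{z} - \PT{y}\cdot\PT{z} = 2(1 - \sqrt{1-v^2}\sqrt{1-u^2}\, w)$, so that the integrand collapses to
\begin{equation*}
(uv)^{2n} \bigl( 1 - \sqrt{1-v^2}\sqrt{1-u^2}\, w \bigr)^{2L+2\eps-2n}.
\end{equation*}

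Next I would iterate the Funk--Hecke formula (first in the $\PT{z}$-direction along $\PT{p}$, then in the $\PT{z}^*$-direction along $\PT{x}^*$ in $\mathbb{S}^{d-1}$) to rewrite the sphere integral as
\begin{equation*}
\frac{\omega_{d-2}}{\omega_d}\, v^{2n} \int_{-1}^1 u^{2n}(1-u^2)^{d/2-1} \!\int_{-1}^1 (1-\alpha w)^{2L+2\eps-2n}(1-w^2)^{(d-3)/2}\,\dd w\, \dd u,
\end{equation*}
where $\alpha \DEF \sqrt{1-v^2}\sqrt{1-u^2}$. For the inner integral I would use the standard identity
\begin{equation*}
\int_{-1}^1 (1-\alpha w)^{-2a}(1-w^2)^{b-1}\,\dd w = \betafcn(1/2, b) \, \Hypergeom{2}{1}{a,a+1/2}{b+1/2}{\alpha^2},
\end{equation*}
which is verified by expanding $(1-\alpha w)^{-2a}$ in a binomial series, evaluating the $w$-moments as Beta integrals, and collapsing the Pochhammer ratio $(2a)_{2k}/(2k)!$ via the duplication formula. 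Applied with $a = n-L-\eps$ and $b=(d-1)/2$, this yields a $\Hypergeom{2}{1}{n-L-\eps,\,n-L+1/2-\eps}{d/2}{\alpha^2}$ factor.

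Finally I would expand this $_2F_1$ as a power series in $(1-v^2)(1-u^2)$ and interchange summation and the $u$-integration, so each term becomes the Beta integral $\int_{-1}^1 u^{2n}(1-u^2)^{k+d/2-1}\,\dd u = \betafcn(n+1/2, k+d/2)$. After pulling out the prefactor $\frac{\omega_{d-2}}{\omega_d}\betafcn(1/2,(d-1)/2) = \gammafcn((d+1)/2)/(\sqrt{\pi}\,\gammafcn(d/2))$ and rewriting $\gammafcn(k+d/2)/\gammafcn(n+k+(d+1)/2)$ in Pochhammer form, the $\Pochhsymb{d/2}{k}$ factors cancel and the remaining series repackages as $\Hypergeom{2}{1}{n-L-\eps,\,n-L+1/2-\eps}{n+(d+1)/2}{1-v^2}$, with overall constant $\Pochhsymb{1/2}{n}/\Pochhsymb{(d+1)/2}{n}$. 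The main (minor) obstacle is just bookkeeping: keeping the many Gamma/Pochhammer factors in line and confirming that the series manipulations are justified, which holds because the $_2F_1$'s in question are absolutely convergent for $v \in (0,1)$ under the hypothesis $|\eps| < 1$.
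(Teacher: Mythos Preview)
Your proposal is correct and follows essentially the same route as the paper's proof: the same cylindrical coordinates with pole $(\PT{x}-\PT{y})/\|\PT{x}-\PT{y}\|$, a double application of the Funk--Hecke formula, evaluation of the inner integral as a ${}_2\HyperF_1$ in $(1-v^2)(1-u^2)$, and then the outer $u$-integral producing the final ${}_2\HyperF_1$ with lower parameter $n+(d+1)/2$. The only cosmetic differences are that the paper obtains your ``standard identity'' for the inner integral by first recognizing Euler's integral representation \cite[Eq.~15.6.1]{DLMF2011.08.29} and then applying the quadratic transformation \cite[Eq.~15.8.13]{DLMF2011.08.29}, and that for the outer integral the paper recognizes the integral representation of a ${}_3\HyperF_2$ \cite[Eq.~16.5.2]{DLMF2011.08.29} (which immediately collapses to ${}_2\HyperF_1$ since the parameter $d/2$ appears both top and bottom) rather than expanding and integrating term by term; your direct binomial/Beta computations reproduce exactly these steps.
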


\begin{proof}
We use the same notation as in the Proof of Theorem~\ref{thm:integral.general.case}. 

Let $g_n( \PT{x}, \PT{y} )$ denote the integral we want to compute. Then applying the Funk-Hecke formula twice, we obtain
\begin{align*}
g_n( \PT{x}, \PT{y} ) 
&= \int_{\mathbb{S}^d} \frac{\left( \frac{1}{2} \PT{x} \cdot \PT{z}- \frac{1}{2} \PT{y} \cdot \PT{z} \right)^{2n}}{\left( 1 - \frac{1}{2} \PT{x} \cdot \PT{z} - \frac{1}{2} \PT{y} \cdot \PT{z} \right)^{2n-2L-2\eps}} \dd \sigma_{d}(\PT{z}) \\
&= \frac{\omega_{d-1}}{\omega_d} \int_{-1}^1 \int_{\mathbb{S}^{d-1}} \frac{\left( v u \right)^{2n} \left( 1 - u^2 \right)^{d/2-1}}{\left( 1 - \sqrt{1-v^2} \sqrt{1-u^2} \, \PT{x}^* \cdot \PT{z}^* \right)^{2n-2L-2\eps}} \dd \sigma_{d-1}(\PT{z}^*) \dd u \\
&= \frac{\omega_{d-1}}{\omega_d} \frac{\omega_{d-2}}{\omega_{d-1}} \int_{-1}^1  \int_{-1}^1 \frac{\left( v u \right)^{2n} \left( 1 - u^2 \right)^{d/2-1} \left( 1 - \tau^2 \right)^{(d-1)/2-1}}{\left( 1 - \sqrt{1-v^2} \sqrt{1-u^2} \, \tau \right)^{2n-2L-2\eps}} \dd \tau \dd u.
\end{align*}
The standard substitution $2x = 1 + \tau$ gives
\begin{align*}
g_n( \PT{x}, \PT{y} ) 
&= \frac{\omega_{d-1}}{\omega_d} 2^{d-2} \frac{\omega_{d-2}}{\omega_{d-1}} v^{2n} \int_{-1}^1 \frac{u^{2n} \left( 1 - u^2 \right)^{d/2-1}}{\left( 1 + \sqrt{1-v^2} \sqrt{1-u^2} \right)^{2n-2L-2\eps}} \\
&\phantom{=\pm}\times \int_0^1 \frac{x^{(d-1)/2-1} \left( 1 - x \right)^{(d-1)/2-1}}{\left( 1 - \frac{2\sqrt{1-v^2} \sqrt{1-u^2}}{1 + \sqrt{1-v^2} \sqrt{1-u^2}} \, x \right)^{2n-2L-2\eps}} \dd x \, \dd u.
\end{align*}
The inner integral represents (cf. \cite[Eq.~15.6.1]{DLMF2011.08.29})
\begin{equation*}
\frac{\gammafcn((d-1)/2) \gammafcn((d-1)/2)}{\gammafcn(d-1)} \Hypergeom{2}{1}{2n-2L-2\eps,(d-1)/2}{d-1}{\frac{2\sqrt{1-v^2} \sqrt{1-u^2}}{1 + \sqrt{1-v^2} \sqrt{1-u^2}}}
\end{equation*}
which can be written as follows when using the quadratic transformation \cite[Eq.~15.8.13]{DLMF2011.08.29}
\begin{equation*}
\begin{split}
&2^{2-d} \frac{\sqrt{\pi} \gammafcn((d-1)/2)}{\gammafcn(d/2)} \left( 1 + \sqrt{1-v^2} \sqrt{1-u^2} \right)^{2n-2L-2\eps} \\
&\phantom{equals\pm}\times \Hypergeom{2}{1}{n-L-\eps,n-L+1/2-\eps}{d/2}{\left( 1 - v^2 \right) \left( 1 - u^2 \right)}.
\end{split}
\end{equation*}
Taking into account \eqref{eq:C.d}, we arrive at
\begin{equation*}
\begin{split}
&g_n( \PT{x}, \PT{y} ) = 2 \frac{\gammafcn((d+1)/2)}{\sqrt{\pi} \, \gammafcn(d/2)} v^{2n} \int_{0}^1 u^{2n} \left( 1 - u^2 \right)^{d/2-1} \\
&\phantom{equals\pm}\times \Hypergeom{2}{1}{n-L-\eps,n-L+1/2-\eps}{d/2}{\left( 1 - v^2 \right) \left( 1 - u^2 \right)} \dd u.
\end{split}
\end{equation*}
The change of variable $x = 1 - u^2$ yields
\begin{equation*}
\begin{split}
&g_n( \PT{x}, \PT{y} ) = \frac{\gammafcn((d+1)/2)}{\sqrt{\pi} \, \gammafcn(d/2)} v^{2n} \int_{0}^1 x^{d/2-1} \left( 1 - x \right)^{n-1/2} \\
&\phantom{equals\pm}\times \Hypergeom{2}{1}{n-L-\eps,n-L+1/2-\eps}{d/2}{\left( 1 - v^2 \right) x} \dd u.
\end{split}
\end{equation*}
This is the integral representation of a $_3\HyperF_2$-hypergeometric function (cf. \cite[Eq.~16.5.2]{DLMF2011.08.29}):
\begin{equation*}
g_n( \PT{x}, \PT{y} ) = \frac{\gammafcn((d+1)/2)}{\sqrt{\pi} \, \gammafcn(d/2)} v^{2n} \frac{\gammafcn(d/2) \gammafcn(n+1/2)}{\gammafcn(n+(d+1)/2)} \Hypergeom{3}{2}{n-L-\eps,n-L+1/2-\eps,d/2}{d/2, n+(d+1)/2}{1 - v^2}
\end{equation*}
Because of an upper parameter coinciding with a lower parameter, the $_3\HyperF_2$-function reduces to a $_2\HyperF_1$-function
\begin{equation*}
g_n( \PT{x}, \PT{y} ) = \frac{\Pochhsymb{1/2}{n}}{\Pochhsymb{(d+1)/2}{n}} \, v^{2n} \Hypergeom{2}{1}{n-L-\eps,n-L+1/2-\eps}{n+(d+1)/2}{1 - v^2}.
\end{equation*}
This completes the proof.
\end{proof}

\begin{proof}[Proof of Theorem~\ref{thm:integral.exceptional.cases}] 
We use the same notation as in Theorem~\ref{thm:integral.general.case} and its proof. The result is obtained by a limit process as $2\beta-1$ tends to an even integer $2L$. That is, let $\beta - 1/2 = L + \eps$ with $\eps$ sufficiently small and $\eps \neq 0$. We consider limits as $\eps \to 0$.

Let $\PT{y} = \PT{x}$. Then \eqref{eq:K.beta.x.x} implies that
\begin{equation*}
\mathcal{K}_{L+1/2}( \PT{x}, \PT{x} ) = \lim_{\eps \to 0} \mathcal{K}_{L+1/2+\eps}( \PT{x}, \PT{x} ) = \frac{1}{2L} \frac{\gammafcn((d+1)/2) \gammafcn(2L + d/2)}{\gammafcn(L + d/2) \gammafcn(L + (d+1)/2)}.
\end{equation*}

Let $\PT{y} = - \PT{x}$, $\PT{x}$. Then \eqref{eq:K.beta.x.neg.x} implies that 
\begin{align*}
\mathcal{K}_\beta( \PT{x}, -\PT{x} ) 
&= (-1)^{L+1} c_d(\beta) 2^{2\beta-1} + \frac{1}{2\beta-1} \Hypergeom{3}{2}{1/2-\beta, 1 - \beta, 1/2}{3/2 - \beta, (d+1)/2}{1} \\
&= (-1)^{L+1} c_d(\beta) 2^{2\beta-1} - \frac{1}{2} \sum_{n=0}^\infty \frac{\Pochhsymb{1/2-\beta}{n} \Pochhsymb{1-\beta}{n} \Pochhsymb{1/2}{n}}{\Pochhsymb{1/2-\beta}{n+1} \Pochhsymb{(d+1)/2}{n} n!} \\
&= (-1)^{L+1} c_d(L+1/2+\eps) 2^{2L+2\eps} - \frac{1}{2} \sum_{n=0}^\infty \frac{1}{n-L-\eps} \frac{\Pochhsymb{1/2-L-\eps}{n} \Pochhsymb{1/2}{n}}{\Pochhsymb{(d+1)/2}{n} n!}.
\end{align*}
Hence
\begin{align*}
\mathcal{K}_{L+1/2}( \PT{x}, -\PT{x} )  
&= \lim_{\eps\to0} \mathcal{K}_\beta( \PT{x}, -\PT{x} ) \\
&= \lim_{\eps\to0} \left\{ \frac{1}{2} \frac{1}{\eps} \frac{\Pochhsymb{1/2-L-\eps}{L} \Pochhsymb{1/2}{L}}{\Pochhsymb{(d+1)/2}{L} L!} + (-1)^{L+1} c_d(L+1/2+\eps) 2^{2L+2\eps} \right\} \\
&\phantom{=}- \frac{1}{2} \sum_{\substack{n=0 \\ n\neq L}}^\infty \frac{1}{n-L} \frac{\Pochhsymb{1/2-L}{n} \Pochhsymb{1/2}{n}}{\Pochhsymb{(d+1)/2}{n} n!}
\end{align*}
The limit above is the same as in \eqref{eq:K.L.b} below but for $v=1$ (that is, $\| \PT{x} - \PT{y} \| = 2$). We can use \eqref{eq:curly.braces.limit} below with $v=1$.
\begin{equation*}
\begin{split}
\mathcal{K}_{L+1/2}( \PT{x}, -\PT{x} )  
&= - \frac{1}{2} \sum_{\substack{n=0 \\ n\neq L}}^\infty \frac{1}{n-L} \frac{\Pochhsymb{1/2-L}{n} \Pochhsymb{1/2}{n}}{\Pochhsymb{(d+1)/2}{n} n!} + \frac{(-1)^{L+1}}{2} \frac{\Pochhsymb{1/2}{L} \Pochhsymb{1/2}{L}}{\Pochhsymb{(d+1)/2}{L} L!} \\
&\phantom{equals=\pm}\times \Big( \digammafcn( 1 / 2 ) + \digammafcn( L + 1 / 2 ) - \digammafcn( L + 1 ) - \digammafcn( L + ( d + 1 ) / 2 ) \Big).
\end{split}
\end{equation*}
This shows \eqref{eq:K.L.plus.half.at.antipodal}.

For the remaining proof we assume the following: let $\PT{y} \neq - \PT{x}$ and $\PT{y} \neq \PT{x}$; that is $0 < v < 1$. 

The integrand of the integral representation of $\mathcal{Q}_{\beta-1}$ in Theorem~\ref{thm:integral.general.case} splits into two non-critical and one critical part as follows.
\begin{align}
&\frac{1}{2\beta-1} \Hypergeom{2}{1}{1/2-\beta,1-\beta}{3/2-\beta}{x^2} = \frac{1}{2\beta-1} \sum_{n=0}^\infty \frac{\Pochhsymb{1/2-\beta}{n} \Pochhsymb{1-\beta}{n}}{\Pochhsymb{3/2-\beta}{n} n!} x^{2n} \notag \\
&\phantom{equals}= - \frac{1}{2} \sum_{n=0}^\infty \frac{\Pochhsymb{1-\beta}{n}}{\left( n + 1/2 - \beta \right) n!} x^{2n} = - \frac{1}{2} \sum_{n=0}^\infty \frac{\Pochhsymb{1/2-\eps-L}{n}}{\left( n - L - \eps \right) n!} x^{2n} \notag \\
&\phantom{equals}= \frac{1}{2} \sum_{n=0}^{L-1} \frac{\Pochhsymb{1/2-\eps-L}{n}}{\left( L - n + \eps \right) n!} x^{2n} - \frac{1}{2} \frac{\Pochhsymb{1/2-\eps-L}{L}}{\left( - \eps \right) L!} x^{2L} - \frac{1}{2} \sum_{n=L+1}^\infty \frac{\Pochhsymb{1/2-\eps-L}{n}}{\left( n - L - \eps \right) n!} x^{2n}. \label{eq:repr.hypergeometric.1}
\end{align}
Consequently,
\begin{align*}
&\lim_{\eps \to 0} \left\{ \frac{1}{2\beta-1} \Hypergeom{2}{1}{1/2-\beta,1-\beta}{3/2-\beta}{x^2} + - \frac{1}{2} \frac{\Pochhsymb{1/2-\eps-L}{L}}{\left( - \eps \right) L!} x^{2L} \right\} \\
&\phantom{equals}= \frac{1}{2} \sum_{n=0}^{L-1} \frac{\Pochhsymb{1/2-L}{n}}{\left( L - n \right) n!} x^{2n} - \frac{1}{2} \sum_{n=L+1}^\infty \frac{\Pochhsymb{1/2-L}{n}}{\left( n - L \right) n!} x^{2n} \\
&\phantom{equals}= \frac{1}{2} \sum_{n=1}^{L} \frac{1}{n} \frac{\Pochhsymb{1/2-L}{L-n}}{(L-n)!} \left( x^{2} \right)^{L-n} - \frac{1}{2} x^{2L} \sum_{n=1}^\infty \frac{1}{n} \frac{\Pochhsymb{1/2-L}{n+L}}{ (n+L)!} x^{2n} \\
&\phantom{equals}= \frac{1}{2} \sum_{n=1}^{L} \frac{1}{n} \frac{\Pochhsymb{1/2-L}{L-n}}{(L-n)!} \left( x^{2} \right)^{L-n} - \frac{1}{2} \frac{\Pochhsymb{1/2-L}{L}}{L!}  x^{2L} \sum_{n=1}^\infty \frac{1}{n} \frac{\Pochhsymb{1/2}{n}}{\Pochhsymb{L+1}{n}} x^{2n}.
\end{align*}

The critical contribution to the desired limit (as $\eps \to 0$) for $\mathcal{K}_\beta$ are due to the singled out term above and the second part in \eqref{eq:cal.K.beta.form}. 

{\bf Integral representation.} Hence, by Theorem~\ref{thm:integral.general.case} and \eqref{eq:cal.K.beta.1} 
\begin{equation}
\begin{split} \label{eq:int.representation}
\mathcal{K}_\beta( \PT{x}, \PT{y} ) 
&= \int_{\mathbb{S}^d} \frac{\frac{1}{2\beta-1} \Hypergeom{2}{1}{1/2-\beta,1-\beta}{3/2-\beta}{\left( \frac{\PT{x} \cdot \PT{z}-\PT{y} \cdot \PT{z}}{2 - \PT{x} \cdot \PT{z} - \PT{y} \cdot \PT{z}} \right)^2} + \frac{1}{2} \frac{\Pochhsymb{1-\beta}{L}}{\left( - \eps \right) \gammafcn(L+1)} \left( \frac{\PT{x} \cdot \PT{z}-\PT{y} \cdot \PT{z}}{2 - \PT{x} \cdot \PT{z} - \PT{y} \cdot \PT{z}} \right)^{2L} }{\left( 1 - \frac{1}{2} \PT{x} \cdot \PT{z} - \frac{1}{2} \PT{y} \cdot \PT{z} \right)^{1-2\beta}} \dd \sigma_{d}(\PT{z}) \\
&\phantom{=}+ \int_{\mathbb{S}^d} \left\{ \frac{\gammafcn(1-2\beta) \gammafcn( \beta )}{\gammafcn(1-\beta)} \left[ \left( \PT{x} \cdot \PT{z} - \PT{y} \cdot \PT{z} \right)^2 \right]^{\beta-1/2} - \frac{\frac{1}{2} \frac{\Pochhsymb{1-\beta}{L}}{\left( - \eps \right) \gammafcn(L+1)} \left( \frac{\PT{x} \cdot \PT{z}-\PT{y} \cdot \PT{z}}{2 - \PT{x} \cdot \PT{z} - \PT{y} \cdot \PT{z}} \right)^{2L} }{\left( 1 - \frac{1}{2} \PT{x} \cdot \PT{z} - \frac{1}{2} \PT{y} \cdot \PT{z} \right)^{1-2\beta}} \right\} \dd \sigma_d( \PT{z} ).
\end{split}
\end{equation}

The integrand in curly braces above can be rewritten as follows
\begin{equation}
\begin{split} \label{eq:integrand.1}
\Bigg\{ \cdots \Bigg\} 
&= - \frac{2^{1-2\beta}}{2} \frac{\Pochhsymb{1-\beta}{L}}{L!} \frac{\left( \PT{x} \cdot \PT{z}-\PT{y} \cdot \PT{z} \right)^{2L}}{\left( 2 - \PT{x} \cdot \PT{z}-\PT{y} \cdot \PT{z} \right)^{2L + 1 - 2\beta}} \\
&\phantom{=\pm}\times \frac{1}{\eps} \Bigg[ 2^{2\beta} \frac{L! \left( - \eps \right) \gammafcn( 1 - 2 \beta ) \gammafcn( \beta )}{\gammafcn( 1 - \beta) \Pochhsymb{1-\beta}{L}} \left( \frac{\left( \PT{x} \cdot \PT{z}-\PT{y} \cdot \PT{z} \right)^2}{\left( 2 - \PT{x} \cdot \PT{z}-\PT{y} \cdot \PT{z} \right)^2} \right)^{\beta-1/2-L} - 1 \Bigg].
\end{split}
\end{equation}
Since 
\begin{align*}
\Pochhsymb{1-\beta}{L} &= \frac{(-1)^L}{\Pochhsymb{\beta}{-L}} = (-1)^L \frac{\gammafcn( \beta )}{\gammafcn( \beta - L )}, \\
\left( - \eps \right) \gammafcn( - L - \eps ) &= \left( - \eps \right) \gammafcn( - \eps ) \Pochhsymb{-\eps}{-L} = \gammafcn(1-\eps) \frac{(-1)^L}{\Pochhsymb{1+\eps}{L}} = (-1)^L \frac{\gammafcn(1-\eps) \gammafcn(1+\eps)}{\gammafcn( L + 1 + \eps )},
\end{align*}
one has
\begin{align*}
2^{2\beta} \frac{L! \left( - \eps \right) \gammafcn( 1 - 2 \beta ) \gammafcn( \beta )}{\gammafcn( 1 - \beta) \Pochhsymb{1-\beta}{L}} 
&= \frac{L! \left( - \eps \right) \gammafcn( 1/2 - \beta ) \gammafcn( 1 - \beta ) \gammafcn( \beta )}{\gammafcn(1/2) \gammafcn( 1 - \beta) \Pochhsymb{1-\beta}{L}} \\
&= (-1)^L \frac{L! \left( - \eps \right) \gammafcn( 1/2 - \beta ) \gammafcn( 1 - \beta ) \gammafcn( \beta ) \gammafcn( \beta - L )}{\gammafcn(1/2) \gammafcn( 1 - \beta) \gammafcn( \beta )} \\
&= (-1)^L \frac{\gammafcn( L + 1 ) \left( - \eps \right) \gammafcn( 1/2 - \beta ) \gammafcn( \beta - L )}{\gammafcn(1/2)} \\
&= \frac{\gammafcn( L + 1 ) \gammafcn(1-\eps) \gammafcn(1+\eps) \gammafcn( 1/2 + \eps )}{\gammafcn(1/2) \gammafcn( L + 1 + \eps )} \\
&= \frac{\gammafcn( L + 1 )}{\gammafcn( L + 1 + \eps )} \frac{\gammafcn( 1/2 + \eps )}{\gammafcn(1/2)} \frac{\gammafcn(1+\eps)}{\gammafcn(1)} \frac{\gammafcn(1-\eps)}{\gammafcn(1)}.
\end{align*}

Let 
\begin{equation*}
\widetilde{G}( x; \eps ) \DEF \frac{\gammafcn( L + 1 )}{\gammafcn( L + 1 + \eps )} \frac{\gammafcn( 1/2 + \eps )}{\gammafcn(1/2)} \frac{\gammafcn(1+\eps)}{\gammafcn(1)} \frac{\gammafcn(1-\eps)}{\gammafcn(1)} \, x^{\eps}, \qquad -1/2 < \eps < 1/2, \, x > 0.
\end{equation*}
For $x > 0$ this function assumes the value $1$ at $\eps = 0$ and is differentiable at $\eps = 0$ with
\begin{equation*}
\frac{\partial \widetilde{G}( x; \eps )}{\partial \eps} \Big|_{\eps=0} = - \digammafcn( L + 1 ) + \digammafcn( 1 / 2 ) + \digammafcn( 1 ) - \digammafcn( 1 ) + \ln x = \ln x + \digammafcn( 1 / 2 ) - \digammafcn( L + 1 ),
\end{equation*}
where $\digammafcn( z ) \DEF \gammafcn^\prime( z ) / \gammafcn( z )$ denotes the Digamma function.

Hence, taking the limit as $\eps \to 0$ in \eqref{eq:integrand.1}, we obtain
\begin{align*}
\lim_{\eps\to0} \Bigg\{ \cdots \Bigg\} 
&= - \frac{2^{-2L}}{2} \frac{\Pochhsymb{1/2-L}{L}}{L!} \left( \PT{x} \cdot \PT{z}-\PT{y} \cdot \PT{z} \right)^{2L} \lim_{\eps\to0} \frac{\widetilde{G}( \left( \frac{\PT{x} \cdot \PT{z}-\PT{y} \cdot \PT{z}}{2 - \PT{x} \cdot \PT{z}-\PT{y} \cdot \PT{z}} \right)^2 ; \eps) - 1}{\eps} \\
&= \frac{(-1)^{L+1}}{2} \frac{\Pochhsymb{1/2}{L}}{2^{2L} \, L!} \left( \PT{x} \cdot \PT{z}-\PT{y} \cdot \PT{z} \right)^{2L} \\
&\phantom{=\pm}\times \Bigg[ 2 \ln \| \PT{x} - \PT{y} \| + \ln \Big( \frac{\frac{\PT{x}-\PT{y}}{\| \PT{x} - \PT{y} \|} \cdot \PT{z}}{2 - \PT{x} \cdot \PT{z}-\PT{y} \cdot \PT{z}} \Big)^2 + \digammafcn( 1 / 2 ) - \digammafcn( L + 1 ) \Bigg],
\end{align*}
provided $\PT{x} \cdot \PT{z}-\PT{y} \cdot \PT{z} \neq 0$.

It follows that after interchanging limit and integration in \eqref{eq:int.representation} (cf. Appendix~\ref{appendix:limits}), we get
\begin{align*}
\mathcal{K}_{L+1/2}( \PT{x}, \PT{y} ) 
&= \lim_{\eps \to 0} \mathcal{K}_\beta( \PT{x}, \PT{y} ) \\
&= \int_{\mathbb{S}^d} \frac{\frac{1}{2} \sum_{n=1}^{L} \frac{1}{n} \frac{\Pochhsymb{1/2-L}{L-n}}{(L-n)!} \left( \frac{\PT{x} \cdot \PT{z}-\PT{y} \cdot \PT{z}}{2 - \PT{x} \cdot \PT{z} - \PT{y} \cdot \PT{z}} \right)^{2L-2n}}{\left( 1 - \frac{1}{2} \PT{x} \cdot \PT{z} - \frac{1}{2} \PT{y} \cdot \PT{z} \right)^{-2L}} \dd \sigma_{d}(\PT{z}) \\
&\phantom{=}- \int_{\mathbb{S}^d} \frac{\frac{1}{2} \frac{\Pochhsymb{1/2-L}{L}}{L!} \sum_{n=1}^\infty \frac{1}{n} \frac{\Pochhsymb{1/2}{n}}{\Pochhsymb{L+1}{n}} \left( \frac{\PT{x} \cdot \PT{z}-\PT{y} \cdot \PT{z}}{2 - \PT{x} \cdot \PT{z} - \PT{y} \cdot \PT{z}} \right)^{2n+2L}}{\left( 1 - \frac{1}{2} \PT{x} \cdot \PT{z} - \frac{1}{2} \PT{y} \cdot \PT{z} \right)^{-2L}} \dd \sigma_{d}(\PT{z}) \\
&\phantom{=}+ \frac{(-1)^{L+1}}{2} \frac{\Pochhsymb{1/2}{L}}{2^{2L} \, L!} \int_{\mathbb{S}^d} \left( \PT{x} \cdot \PT{z}-\PT{y} \cdot \PT{z} \right)^{2L} \\
&\phantom{=\pm}\times \Bigg[ \ln \Big( \frac{\frac{\PT{x}-\PT{y}}{\| \PT{x} - \PT{y} \|} \cdot \PT{z}}{2 - \PT{x} \cdot \PT{z}-\PT{y} \cdot \PT{z}} \Big)^2 + \digammafcn( 1 / 2 ) - \digammafcn( L + 1 ) \Bigg] \dd \sigma_d( \PT{z} ) \\
&\phantom{=}+ (-1)^{L+1} \frac{\Pochhsymb{1/2}{L}}{2^{2L} \, L!} \ln \| \PT{x} - \PT{y} \| \int_{\mathbb{S}^d} \left( \PT{x} \cdot \PT{z}-\PT{y} \cdot \PT{z} \right)^{2L} \dd \sigma_d( \PT{z} ).
\end{align*}

The right-most integral evaluates as (cf. \eqref{eq:cal.K.beta.1})
\begin{equation*}
\int_{\mathbb{S}^d} \left( \PT{x} \cdot \PT{z}-\PT{y} \cdot \PT{z} \right)^{2L} \dd \sigma_d( \PT{z} ) = \frac{\Pochhsymb{1/2}{L}}{\Pochhsymb{(d+1)/2}{L}} \left\| \PT{x} - \PT{y} \right\|^{2L}.
\end{equation*}

This completes the proof of the integral representation.

{\bf Series representation.} From \eqref{eq:K.beta.2.2ndform} and the following relations we obtain the following representation 
\begin{equation*}
\mathcal{Q}_{\beta-1}( \PT{x}, \PT{y} ) = \frac{1}{2\beta-1} \sum_{n=0}^\infty \frac{\Pochhsymb{1/2 - \beta}{n} \Pochhsymb{1-\beta}{n}}{\Pochhsymb{(d+1)/2}{n} \, n!} \frac{\Pochhsymb{1/2}{n}}{\Pochhsymb{3/2-\beta}{n}} \Hypergeom{2}{1}{-n, 1 - \beta}{1/2-n}{1-v^2}
\end{equation*}
which can be simplified to (using $\beta = L + 1 / 2 + \eps$)
\begin{equation*}
\mathcal{Q}_{\beta-1}( \PT{x}, \PT{y} ) = - \frac{1}{2} \sum_{n=0}^\infty \frac{1}{n-L-\eps} \frac{\Pochhsymb{1/2-L-\eps}{n}\Pochhsymb{1/2}{n}}{\Pochhsymb{(d+1)/2}{n} \, n!} \Hypergeom{2}{1}{-n, 1/2 - L - \eps}{1/2-n}{1-v^2}.
\end{equation*}
This sum can be also split into two non-critical and one critical part regarding a limit process $\beta \to L + 1/2$ (that is, $\eps \to 0$) as follows.
\begin{align*}
\mathcal{Q}_{\beta-1}( \PT{x}, \PT{y} ) 
&= \frac{1}{2} \sum_{n=0}^{L-1} \frac{1}{L+\eps-n} \frac{\Pochhsymb{1/2-L-\eps}{n}\Pochhsymb{1/2}{n}}{\Pochhsymb{(d+1)/2}{n} \, n!} \Hypergeom{2}{1}{-n, 1/2 - L - \eps}{1/2-n}{1-v^2} \\
&\phantom{=}+ \frac{1}{2} \frac{1}{\eps} \frac{\Pochhsymb{1/2-L-\eps}{L}\Pochhsymb{1/2}{L}}{\Pochhsymb{(d+1)/2}{L} \, L!} \Hypergeom{2}{1}{-L, 1/2 - L - \eps}{1/2-L}{1-v^2} \\
&\phantom{=}- \frac{1}{2} \sum_{n=L+1}^\infty \frac{1}{n-L-\eps} \frac{\Pochhsymb{1/2-L-\eps}{n}\Pochhsymb{1/2}{n}}{\Pochhsymb{(d+1)/2}{n} \, n!} \Hypergeom{2}{1}{-n, 1/2 - L - \eps}{1/2-n}{1-v^2}
\end{align*}
Clearly (after shifting the index in the infinite series above)
\begin{align*}
&\lim_{\eps\to0} \left\{ \mathcal{Q}_{\beta-1}( \PT{x}, \PT{y} ) - \frac{1}{2} \frac{1}{\eps} \frac{\Pochhsymb{1/2-L-\eps}{L}\Pochhsymb{1/2}{L}}{\Pochhsymb{(d+1)/2}{L} \, L!} \Hypergeom{2}{1}{-L, 1/2 - L - \eps}{1/2-L}{1-v^2} \right\} \\
&\phantom{equals}= \frac{1}{2} \sum_{n=0}^{L-1} \frac{1}{L-n} \frac{\Pochhsymb{1/2-L}{n}\Pochhsymb{1/2}{n}}{\Pochhsymb{(d+1)/2}{n} \, n!} \Hypergeom{2}{1}{-n, 1/2 - L }{1/2-n}{1-v^2} \\
&\phantom{equals=}- \frac{1}{2} \sum_{n=1}^\infty \frac{1}{n} \frac{\Pochhsymb{1/2-L}{n+L}\Pochhsymb{1/2}{n+L}}{\Pochhsymb{(d+1)/2}{n+L} \, (n+L)!} \Hypergeom{2}{1}{-L-n, 1/2 - L}{1/2-L-n}{1-v^2}.
\end{align*}

We remark that the hypergeometric polynomial
\begin{equation*}
f_n(v) \DEF \Hypergeom{2}{1}{-L-n, 1/2 - L}{1/2-L-n}{1-v^2}
\end{equation*}
is strictly monotonically increasing on $(0,1)$ and assumes there its maximum value $1$ at $v = 1$. For example, this can be seen from applying linear transformations of variable \cite[Eq.s~15.8.1 and 15.8.6 15.6.1]{DLMF2011.08.29} and change to integral representation \cite[Eq.~15.6.1]{DLMF2011.08.29} as follows
\begin{align*}
f_n(v) 
&= v^{2L+2n} \Hypergeom{2}{1}{-L-n,-n}{1/2-L-n}{1 - \frac{1}{v^2}} \\
&= \frac{\Pochhsymb{-L-n}{n}}{\Pochhsymb{1/2-L-n}{n}} v^{2L} \Hypergeom{2}{1}{-n,1/2}{L+1}{v^2} \\
&= \frac{\Pochhsymb{-L-n}{n}}{\Pochhsymb{1/2-L-n}{n}} \frac{\gammafcn(L+1)}{\gammafcn(1/2) \gammafcn(L+1/2)} v^{2L} \int_0^1 \frac{t^{1/2-1} \left( 1 - t \right)^{L+1/2-1}}{\left( 1 - v^2 \, t \right)^n} \dd t
\end{align*}
and the fact that the leading coefficients are positive. Hence, the series in the last limit is uniformly convergent with respect to $v$ in $[0,1]$, since $L \geq 1$. 

Putting everything together, we arrive at
\begin{align}
\mathcal{K}_{L+1/2}( \PT{x}, \PT{y} ) 
&= \lim_{\eps \to 0} \left\{ \mathcal{Q}_{\beta-1}( \PT{x}, \PT{y} ) + (-1)^{L+1} c_d(\beta) \left\| \PT{x} - \PT{y} \right\|^{2\beta-1} \right\} \notag \\
&= \lim_{\eps\to0} \left\{ \mathcal{Q}_{\beta-1}( \PT{x}, \PT{y} ) - \frac{1}{2} \frac{1}{\eps} \frac{\Pochhsymb{1/2-L-\eps}{L}\Pochhsymb{1/2}{L}}{\Pochhsymb{(d+1)/2}{L} \, L!} \Hypergeom{2}{1}{-L, 1/2 - L - \eps}{1/2-L}{1-v^2} \right\} \notag \\
\begin{split}
&\phantom{=}+ \lim_{\eps \to 0} \Bigg\{ \frac{1}{2} \frac{1}{\eps} \frac{\Pochhsymb{1/2-L-\eps}{L}\Pochhsymb{1/2}{L}}{\Pochhsymb{(d+1)/2}{L} \, L!} \Hypergeom{2}{1}{-L, 1/2 - L - \eps}{1/2-L}{1-v^2} \\
&\phantom{=\pm}+ (-1)^{L+1} c_d(\beta) \left\| \PT{x} - \PT{y} \right\|^{2\beta-1} \Bigg\}. \notag
\end{split} \\
&= \lim_{\eps\to0} \left\{ \mathcal{Q}_{\beta-1}( \PT{x}, \PT{y} ) - \frac{1}{2} \frac{1}{\eps} \frac{\Pochhsymb{1/2-L-\eps}{L}\Pochhsymb{1/2}{L}}{\Pochhsymb{(d+1)/2}{L} \, L!} \Hypergeom{2}{1}{-L, 1/2 - L - \eps}{1/2-L}{1-v^2} \right\} \notag \\
&\phantom{=}+ \frac{1}{2} \frac{\Pochhsymb{1/2-L}{L}\Pochhsymb{1/2}{L}}{\Pochhsymb{(d+1)/2}{L} \, L!} v^{2L} \lim_{\eps \to 0} \frac{1}{\eps} \Bigg\{ \Hypergeom{2}{1}{-L, \eps}{1/2-L}{1-\frac{1}{v^2}} - 1 \Bigg\} \notag \\
\begin{split}
&\phantom{=}+ \lim_{\eps \to 0} \Bigg\{ \frac{1}{2} \frac{1}{\eps} \frac{\Pochhsymb{1/2-L-\eps}{L}\Pochhsymb{1/2}{L}}{\Pochhsymb{(d+1)/2}{L} \, L!} v^{2L} + (-1)^{L+1} c_d(\beta) \left\| \PT{x} - \PT{y} \right\|^{2\beta-1} \Bigg\}. \label{eq:K.L.b} 
\end{split}
\end{align}
In the last step we use a linear transformation of variable for the hypergeometric function.

First, we observe that
\begin{align*}
v^{2L} \lim_{\eps \to 0} \frac{1}{\eps} \Bigg\{ \Hypergeom{2}{1}{-L, \eps}{1/2-L}{1-\frac{1}{v^2}} - 1 \Bigg\} 
&= \lim_{\eps \to 0} \sum_{k=1}^L \frac{(-1)^k \Pochhsymb{-L}{k} \Pochhsymb{1+\eps}{k-1}}{\Pochhsymb{1/2-L}{k} k!} v^{2(L-k)} \left( 1 - v^2 \right)^k \\
&= \sum_{k=1}^L \frac{(-1)^k}{k} \frac{\Pochhsymb{-L}{k}}{\Pochhsymb{1/2-L}{k}} v^{2(L-k)} \left( 1 - v^2 \right)^k.  
\end{align*}

By the definition of a Pochhammer symbol as a rising factorial, we have
\begin{equation*}
\frac{\Pochhsymb{1/2-L-\eps}{L}}{L!} = (-1)^L \binom{L+1/2+\eps}{L} = (-1)^L \frac{\Pochhsymb{1/2+\eps}{L}}{L!} = (-1)^L \frac{\gammafcn(L+1/2+\eps)}{\gammafcn(1/2+\eps) \gammafcn(L+1)}.
\end{equation*}
Furthermore, the duplication formula for the Gamma function gives
\begin{align*}
c_d( L + 1/2 + \eps ) 
&= \frac{1}{\sin( \pi \eps )} \frac{2^{1-2\beta}}{2 \left( L + \eps \right)} \frac{\gammafcn( ( d + 1 ) / 2 ) \gammafcn( L + 1/2 + \eps ) \gammafcn( L + 1/2 + \eps )}{\gammafcn(L + ( d + 1 ) / 2 + \eps ) \gammafcn( L + \eps )} \\
&= \frac{\gammafcn( \eps ) \gammafcn( 1 - \eps )}{\pi} \frac{2^{1-2\beta}}{2 \left( L + \eps \right)} \frac{\gammafcn( ( d + 1 ) / 2 ) \gammafcn( L + 1/2 + \eps ) \gammafcn( L + 1/2 + \eps )}{\gammafcn(L + ( d + 1 ) / 2 + \eps ) \gammafcn( L + \eps )} \\
&= \frac{\gammafcn( 1 + \eps ) \gammafcn( 1 - \eps )}{\eps \gammafcn(1/2) \gammafcn(1/2)} \frac{2^{1-2\beta}}{2} \frac{\gammafcn( ( d + 1 ) / 2 ) \gammafcn( L + 1/2 + \eps ) \gammafcn( L + 1/2 + \eps )}{\gammafcn(L + ( d + 1 ) / 2 + \eps ) \gammafcn( L + 1 + \eps )}.
\end{align*}
Hence, the expression in curly braces in \eqref{eq:K.L.b} can be rewritten as follows
\begin{align*}
\Bigg\{ \cdots \Bigg\} 
&= \frac{1}{2} \frac{1}{\eps} \frac{\Pochhsymb{1/2-L-\eps}{L}\Pochhsymb{1/2}{L}}{\Pochhsymb{(d+1)/2}{L} \, L!} v^{2L} + (-1)^{L+1} c_d(L+1/2+\eps) \left( 2 v \right)^{2L+2\eps} \\
&= \frac{1}{\eps} \frac{(-1)^L}{2} \frac{\gammafcn((d+1)/2) \gammafcn(L+1/2) \gammafcn(L+1/2+\eps)}{\gammafcn(1/2)  \gammafcn(L+(d+1)/2) \gammafcn(1/2+\eps) \gammafcn(L+1)} v^{2L} \\
&\phantom{=}+ \frac{1}{\eps} \frac{(-1)^{L+1}}{2} \frac{\gammafcn( 1 + \eps ) \gammafcn( 1 - \eps )}{\gammafcn(1/2) \gammafcn(1/2)} \frac{\gammafcn( ( d + 1 ) / 2 ) \gammafcn( L + 1/2 + \eps ) \gammafcn( L + 1/2 + \eps )}{\gammafcn(L + ( d + 1 ) / 2 + \eps ) \gammafcn( L + 1 + \eps )} v^{2L+2\eps} \\
&= \frac{(-1)^{L+1}}{2} \frac{\gammafcn((d+1)/2) \gammafcn(L+1/2) \gammafcn(L+1/2+\eps)}{\gammafcn(1/2)  \gammafcn(L+(d+1)/2) \gammafcn(1/2+\eps) \gammafcn(L+1)} v^{2L} \, \frac{G(\eps) - 1}{\eps},
\end{align*}
where the function
\begin{equation*}
G( \eps ) \DEF \frac{\gammafcn( 1 / 2 + \eps )}{\gammafcn( 1 / 2 )} \frac{\gammafcn( L + 1 )}{\gammafcn( L + 1 + \eps )} \frac{\gammafcn(L + ( d + 1 ) / 2 )}{\gammafcn(L + ( d + 1 ) / 2 + \eps )} \frac{\gammafcn( L + 1 / 2 + \eps )}{\gammafcn( L + 1 / 2 )} \frac{\gammafcn( 1 + \eps )}{\gammafcn(1)} \frac{\gammafcn( 1 - \eps )}{\gammafcn(1)} \, v^{2\eps}
\end{equation*}
assumes the value $1$ at $\eps = 0$ and is differentiable at $0$ with (using product rule)
\begin{align*}
G^\prime(0) 
&= \lim_{\eps\to0} \frac{G(\eps) - 1}{\eps} \\
&= \frac{\gammafcn^\prime( 1/2 + \eps )}{\gammafcn( 1 / 2 )} \Bigg|_{\eps\to0} - \frac{\gammafcn( L + 1 ) \gammafcn^\prime( L + 1 + \eps )}{[ \gammafcn( L + 1 + \eps ) ]^2} \Bigg|_{\eps\to0} \\
&\phantom{=}- \frac{\gammafcn( L + ( d + 1 ) / 2  ) \gammafcn^\prime( L + ( d + 1 ) / 2  + \eps )}{[ \gammafcn( L + ( d + 1 ) / 2  + \eps ) ]^2} \Bigg|_{\eps\to0} \\
&\phantom{=}+ \frac{\gammafcn^\prime( L + 1/2 + \eps )}{\gammafcn( L + 1 / 2 )} \Bigg|_{\eps\to0} + \frac{\gammafcn^\prime( 1 + \eps )}{\gammafcn( 1 )} \Bigg|_{\eps\to0} - \frac{\gammafcn^\prime( 1 - \eps )}{\gammafcn( 1 )} \Bigg|_{\eps\to0} \\
&\phantom{=}+ \ln v^2.
\end{align*}
Using the Digamma function $\digammafcn( z ) = \gammafcn^\prime( z ) / \gammafcn( z )$, we can write
\begin{align*}
G^\prime(0) 
&= \digammafcn( 1 / 2 ) - \digammafcn( L + 1 ) - \digammafcn( L + ( d + 1 ) / 2 ) + \digammafcn( L + 1 / 2 ) + \digammafcn( 1 ) - \digammafcn( 1 ) + \ln v^2 \\
&= 2\ln( 2 v ) - 2 \ln 2 + \digammafcn( 1 / 2 ) + \digammafcn( L + 1 / 2 ) - \digammafcn( L + 1 ) - \digammafcn( L + ( d + 1 ) / 2 ).  
\end{align*}

Thus, we obtain
\begin{align}
&\lim_{\eps \to 0} \Bigg\{ \frac{1}{2} \frac{1}{\eps} \frac{\Pochhsymb{1/2-L-\eps}{L}\Pochhsymb{1/2}{L}}{\Pochhsymb{(d+1)/2}{L} \, L!} v^{2L} + (-1)^{L+1} c_d(\beta) \left\| \PT{x} - \PT{y} \right\|^{2\beta-1} \Bigg\} \notag \\
&\phantom{equals}= \frac{(-1)^{L+1}}{2} \frac{\gammafcn((d+1)/2) \gammafcn(L+1/2) \gammafcn(L+1/2)}{\gammafcn(1/2) \gammafcn(L+(d+1)/2) \gammafcn(1/2) \gammafcn(L+1)} v^{2L} \, G^\prime(0) \notag \\
\begin{split} \label{eq:curly.braces.limit}
&\phantom{equals}= \frac{(-1)^{L+1}}{2} \frac{\Pochhsymb{1/2}{L} \Pochhsymb{1/2}{L}}{\Pochhsymb{(d+1)/2}{L} L!} \, v^{2L} \Big( 2\ln( 2 v ) - 2 \ln 2 + \digammafcn( 1 / 2 ) + \digammafcn( L + 1 / 2 ) \\
&\phantom{equals=\pm}- \digammafcn( L + 1 ) - \digammafcn( L + ( d + 1 ) / 2 ) \Big). 
\end{split}
\end{align}

Putting everything together, we have
\begin{align}
\mathcal{K}_{L+1/2}( \PT{x}, \PT{y} ) 
&= \lim_{\eps\to0} \left\{ \mathcal{Q}_{\beta-1}( \PT{x}, \PT{y} ) - \frac{1}{2} \frac{1}{\eps} \frac{\Pochhsymb{1/2-L-\eps}{L}\Pochhsymb{1/2}{L}}{\Pochhsymb{(d+1)/2}{L} \, L!} \Hypergeom{2}{1}{-L, 1/2 - L - \eps}{1/2-L}{1-v^2} \right\} \notag \\
&\phantom{=}+ \frac{1}{2} \frac{\Pochhsymb{1/2-L}{L}\Pochhsymb{1/2}{L}}{\Pochhsymb{(d+1)/2}{L} \, L!} v^{2L} \lim_{\eps \to 0} \frac{1}{\eps} \Bigg\{ \Hypergeom{2}{1}{-L, \eps}{1/2-L}{1-\frac{1}{v^2}} - 1 \Bigg\} \notag \\
&\phantom{=}+ \lim_{\eps \to 0} \Bigg\{ \frac{1}{2} \frac{1}{\eps} \frac{\Pochhsymb{1/2-L-\eps}{L}\Pochhsymb{1/2}{L}}{\Pochhsymb{(d+1)/2}{L} \, L!} v^{2L} + (-1)^{L+1} c_d(\beta) \left\| \PT{x} - \PT{y} \right\|^{2\beta-1} \Bigg\} \notag \\
&= \frac{1}{2} \sum_{n=0}^{L-1} \frac{1}{L-n} \frac{\Pochhsymb{1/2-L}{n}\Pochhsymb{1/2}{n}}{\Pochhsymb{(d+1)/2}{n} \, n!} \Hypergeom{2}{1}{-n, 1/2 - L }{1/2-n}{1-v^2} \notag \\
&\phantom{=}- \frac{1}{2} \sum_{n=1}^\infty \frac{1}{n} \frac{\Pochhsymb{1/2-L}{n+L}\Pochhsymb{1/2}{n+L}}{\Pochhsymb{(d+1)/2}{n+L} \, (n+L)!} \Hypergeom{2}{1}{-L-n, 1/2 - L}{1/2-L-n}{1-v^2} \label{eq:tilde.Q.infinite.series}\\
&\phantom{=}+ \frac{1}{2} \frac{\Pochhsymb{1/2-L}{L}\Pochhsymb{1/2}{L}}{\Pochhsymb{(d+1)/2}{L} \, L!} \sum_{k=1}^L \frac{(-1)^k}{k} \frac{\Pochhsymb{-L}{k}}{\Pochhsymb{1/2-L}{k}} v^{2(L-k)} \left( 1 - v^2 \right)^k \notag \\
&\phantom{=}+ \frac{(-1)^{L+1}}{2} \frac{\Pochhsymb{1/2}{L} \Pochhsymb{1/2}{L}}{\Pochhsymb{(d+1)/2}{L} L!} \, v^{2L} \Big( 2\ln( 2 v ) - 2 \ln 2 + \digammafcn( 1 / 2 ) + \digammafcn( L + 1 / 2 ) \notag \\
&\phantom{=\pm}- \digammafcn( L + 1 ) - \digammafcn( L + ( d + 1 ) / 2 ) \Big). \notag
\end{align}

The series expansion of $\widetilde{\mathcal{Q}}_{L-1/2}$ follows.

{\bf Asymptotic analysis.} Inspection of the derivation of the asymptotic relation in Theorem~\ref{thm:integral.general.case} shows that one is permitted to take the limit as $s \to L + d/2$ (which corresponds to $2\beta-1\to 2L$. In particular, the function $\Omega(k)$ remains uniformly bounded for sufficiently small $\eps$ ($s = L + d/2 + \eps$) and therefore, by \eqref{eq:A.k.s.d}
\begin{equation*} 
\widetilde{A}_{k}^{(2)}(L+d/2,d) \sim - 2^{d-2} \left[ \frac{\gammafcn( (d+1)/2 ) \gammafcn(L+(d+1)/2) \gammafcn(2L+d)}{\gammafcn( 1/2 - L) \gammafcn(L+1+d/2)} \right]^2 \frac{\gammafcn(d/2)}{\gammafcn(d/2+2L+d)} \, k^{-4s}.
\end{equation*}

This completes the proof.
\end{proof}

\begin{proof}[Proof of Theorem~\ref{thm:identification.thm}]
From the arguments following the definition of the kernel \eqref{eq:kernel} we already know that $\mathcal{K}_\beta( \PT{x}, \PT{y})$ is a symmetric positive definite kernel; that is the expansion
\begin{equation} \label{eq:kernel.aux}
\mathcal{K}_\beta( \PT{x}, \PT{y} ) = \sum_{n=0}^\infty \lambda_n \, Z(d, n) \, P_n^{(d)}(\PT{x} \cdot \PT{y}).
\end{equation}
has positive coefficients $\lambda_0, \lambda_1, \dots$ (cf. I. J. Schoenberg~\cite{Sch1942}). By Theorems~\ref{thm:integral.general.case} and \ref{thm:integral.exceptional.cases} and the remarks following these theorems, the dominant term of $\lambda_n$ as $n \to \infty$ comes from the coefficient in the ultraspherical expansion of $\mathcal{K}_\beta( \PT{x}, \PT{y}) - \mathcal{Q}_{\beta-1}( \PT{x}, \PT{y})$; that is, $\lambda_n \asymp n^{-2s}$, where $s = \beta + (d-1)/2$. 

The Sobolev space $\mathbb{H}^s(\mathbb{S}^d)$ defined by the coefficients $\lambda_0, \lambda_1, \dots,$ in \eqref{eq:kernel.aux} is a reproducing kernel Hilbert space for $s > d/2$ (that is, $\beta > 1/2$) with reproducing kernel $\mathcal{K}_\beta( \PT{x}, \PT{y} )$. By uniqueness of the reproducing kernel Hilbert space $\mathcal{H}_\beta( \mathcal{K}_\beta, \mathbb{S}^d)$ (uniquely defined by $\mathcal{K}_\beta( \PT{x}, \PT{y} )$), $\mathcal{H}_\beta( \mathcal{K}_\beta, \mathbb{S}^d)$ and $\mathbb{H}^s(\mathbb{S}^d)$ coincide.
\end{proof}

\begin{proof}[Proof of Proposition~\ref{prop:inner-most.int}]
Let $d \geq 2$ and $\beta > 0$. By the Funk-Hecke formula
\begin{equation*}
\int_{\mathbb{S}^d}  \left( \PT{y} \cdot \PT{z} - t \right)_+^{\beta - 1} \, \dd \sigma_d( \PT{y} ) = \frac{\omega_{d-1}}{\omega_d} \int_{t}^1 \left( u - t \right)^{\beta-1} \left( 1 - u^2 \right)^{d/2-1} \dd u.
\end{equation*}
The standard substitution $(1-t) v = u - t$ gives
\begin{align*}
\int_{\mathbb{S}^d}  \left( \PT{y} \cdot \PT{z} - t \right)_+^{\beta - 1} \, \dd \sigma_d( \PT{y} ) 
&= \frac{\omega_{d-1}}{\omega_d} \int_0^1 \frac{\left[ \left( 1 - t \right) v \right]^{\beta-1} \left[ \left( 1 - t \right) \left( 1 - v \right) \right]^{d/2-1}}{\left[ \left( 1 + t \right) \left( 1 + \frac{1-t}{1+t} v \right) \right]^{1-d/2}} \left( 1 - t \right) \dd v \\
&= \left( 1 - t \right)^{\beta + d/2 - 1} \left( 1 + t \right)^{d/2-1} \frac{\omega_{d-1}}{\omega_d} \int_0^1 \frac{v^{\beta-1} \left( 1 - v \right)^{d/2-1}}{\left( 1 + \frac{1-t}{1+t} v \right)^{1-d/2}} \dd v.
\end{align*}
By \cite[Eq.~15.6.1]{DLMF2011.08.29} the last integral represents a Gauss hypergeometric function; that is,
\begin{equation*}
\int_{\mathbb{S}^d}  \left( \PT{y} \cdot \PT{z} - t \right)_+^{\beta - 1} \, \dd \sigma_d( \PT{y} ) = \left( 1 - t \right)^{\beta + d/2 - 1} \left( 1 + t \right)^{d/2-1} \frac{\omega_{d-1}}{\omega_d} \frac{\gammafcn(\beta) \gammafcn(d/2)}{\gammafcn(\beta + d/2)} \Hypergeom{2}{1}{1-d/2,\beta}{\beta+d/2}{-\frac{1-t}{1+t}}.
\end{equation*}
A linear transformation of variable (cf. \cite[Eq.~15.8.1]{DLMF2011.08.29}) yields
\begin{equation*}
\int_{\mathbb{S}^d}  \left( \PT{y} \cdot \PT{z} - t \right)_+^{\beta - 1} \, \dd \sigma_d( \PT{y} ) = \left( 1 - t \right)^{\beta + d/2 - 1} \frac{\omega_{d-1}}{\omega_d} \frac{\gammafcn(\beta) \gammafcn(d/2)}{\gammafcn(\beta + d/2)} 2^{d/2-1} \Hypergeom{2}{1}{1-d/2,d/2}{\beta+d/2}{\frac{1-t}{2}}.
\end{equation*}
Using \eqref{eq:C.d} and simplification gives the result.
\end{proof}

\begin{proof}[Proof of Proposition~\ref{prop:double.int.kernel}]
Let $d \geq 2$ and $\beta > 1/2$. Substituting the integral representation \eqref{eq:kernel} into the double integral and interchanging integrals, we get 
\begin{align*}
&\int_{\mathbb{S}^d} \int_{\mathbb{S}^d} \mathcal{K}_\beta( \PT{x}, \PT{y} ) \, \dd \sigma_d( \PT{x} ) \dd \sigma_d( \PT{y} ) \\
&\phantom{equals}= \int_{\mathbb{S}^d} \int_{\mathbb{S}^d} \int_{-1}^1 \int_{\mathbb{S}^d} \left( \PT{x} \cdot \PT{z} - t \right)_+^{\beta - 1} \left( \PT{y} \cdot \PT{z} - t \right)_+^{\beta - 1} \dd \sigma_d( \PT{z} ) \dd t \, \dd \sigma_d( \PT{x} ) \dd \sigma_d( \PT{y} ) \\
&\phantom{equals}= \int_{-1}^1 \int_{\mathbb{S}^d} \left[ \int_{\mathbb{S}^d} \left( \PT{x} \cdot \PT{z} - t \right)_+^{\beta - 1} \dd \sigma_d( \PT{x} ) \right] \left[ \int_{\mathbb{S}^d} \left( \PT{y} \cdot \PT{z} - t \right)_+^{\beta - 1} \dd \sigma_d( \PT{y} ) \right] \dd \sigma_d( \PT{z} ) \dd t.
\end{align*}
By Proposition~\ref{prop:inner-most.int} the integrals in square-brackets do not depend on $\PT{z}$. Hence
\begin{equation*}
\begin{split}
&\int_{\mathbb{S}^d} \int_{\mathbb{S}^d} \mathcal{K}_\beta( \PT{x}, \PT{y} ) \, \dd \sigma_d( \PT{x} ) \dd \sigma_d( \PT{y} ) \\
&\phantom{equals}= 2^{d-2} \left[ \frac{\gammafcn((d+1)/2) \gammafcn(\beta)}{\sqrt{\pi} \, \gammafcn(\beta + d/2)} \right]^2 \int_{-1}^1 \left( 1 - t \right)^{2\beta + d - 2} \left[ \Hypergeom{2}{1}{1-d/2,d/2}{\beta+d/2}{\frac{1-t}{2}} \right]^2 \dd t.
\end{split}
\end{equation*}
The standard change of variable $2x = 1 - t$ gives
\begin{equation*}
\begin{split}
&\int_{\mathbb{S}^d} \int_{\mathbb{S}^d} \mathcal{K}_\beta( \PT{x}, \PT{y} ) \, \dd \sigma_d( \PT{x} ) \dd \sigma_d( \PT{y} ) \\
&\phantom{equals}= 2^{d-2} \left[ \frac{\gammafcn((d+1)/2) \gammafcn(\beta)}{\sqrt{\pi} \, \gammafcn(\beta + d/2)} \right]^2 2^{2\beta+d-1} \int_{0}^1 x^{2\beta + d - 2} \left[ \Hypergeom{2}{1}{1-d/2,d/2}{\beta+d/2}{x} \right]^2 \dd x.
\end{split}
\end{equation*}

For $d = 2$ the hypergeometric function above reduces to $1$ and therefore
\begin{equation*}
\int_{\mathbb{S}^2} \int_{\mathbb{S}^2} \mathcal{K}_\beta( \PT{x}, \PT{y} ) \, \dd \sigma_2( \PT{x} ) \dd \sigma_2( \PT{y} ) = \left[ \frac{\gammafcn(1+1/2) \gammafcn(\beta)}{\sqrt{\pi} \, \gammafcn(\beta + 1)} \right]^2 2^{2\beta+1} \int_{0}^1 x^{2\beta} \dd x = \left[ \frac{1}{2 \beta} \right]^2 \frac{2^{2\beta+1}}{2\beta+1}.
\end{equation*}

For $d \geq 2$ we apply Proposition~\ref{prop:integral.identity} to obtain 
\begin{equation*}
\begin{split}
\int_{\mathbb{S}^d} \int_{\mathbb{S}^d} \mathcal{K}_\beta( \PT{x}, \PT{y} ) \, \dd \sigma_d( \PT{x} ) \dd \sigma_d( \PT{y} ) 
&= 2^{d-2} \left[ \frac{\gammafcn((d+1)/2) \gammafcn(\beta)}{\sqrt{\pi} \, \gammafcn(\beta + d/2)} \right]^2 2^{2\beta+d-1} \frac{\gammafcn(2\beta+d-1) \gammafcn(1)}{\gammafcn(\gammafcn(2\beta+d)} \\
&\phantom{=\pm}\times \KampedeFerietA{1:1;1}{1:2;2}{\displaystyle 2\beta + d - 1 \\ \displaystyle 2\beta + d}{\displaystyle 1 - d/2, d/2 \\ \displaystyle \beta + d/2}{\displaystyle 1 - d/2, d/2 \\ \displaystyle \beta + d/2}{1,1}.
\end{split} 
\end{equation*}
For $d$ an even positive integer, the series expansion of the given Kamp{\'e} de F{\'e}riet function terminates after finitely many terms (cf. \eqref{eq:Kampe.de.Feriet.function}). The result follows.
\end{proof}

\begin{proof}[Proof of Proposition~\ref{prop:case.beta.EQ.integer}]
From \eqref{eq:K.beta.2..beta.EQ.integer} we have that
\begin{equation*}
\mathcal{Q}_{M-1}( \PT{x}, \PT{y} ) = \frac{1}{2M-1} \sum_{n=0}^{M-1} \frac{\Pochhsymb{1-M}{n} \Pochhsymb{1/2 - M}{n}}{\Pochhsymb{(d+1)/2}{n} \, n!} \Hypergeom{2}{1}{-n,1-M}{3/2-M}{\frac{1 - \PT{x} \cdot \PT{y}}{2}}.
\end{equation*}
Expanding the hypergeometric polynomial and using that $\Pochhsymb{-n}{m} / n! = (-1)^m / (n - m)!$, we obtain after rearranging terms
\begin{equation*}
\mathcal{Q}_{M-1}( \PT{x}, \PT{y} ) = \frac{1}{2M-1} \sum_{m=0}^{M-1} \frac{(-1)^m \Pochhsymb{1-M}{m}}{\Pochhsymb{3/2-M}{m} \, m!} \left[ \sum_{n = m}^{M-1} \frac{\Pochhsymb{1-M}{n} \Pochhsymb{1/2-M}{n}}{\Pochhsymb{(d+1)/2}{n} (n-m)!} \right] \left( \frac{1 - \PT{x} \cdot \PT{y}}{2} \right)^m.
\end{equation*}
The square-bracketed expression is a hypergeometric polynomial evaluated at unity and can be reduced as follows (cf. \cite[Eq.~15.4.20]{DLMF2011.08.29}):
\begin{align*}
&\frac{\Pochhsymb{1-M}{m} \Pochhsymb{1/2-M}{m}}{\Pochhsymb{(d+1)/2}{m}} \Hypergeom{2}{1}{m+1-M,m+1/2-M}{m+(d+1)/2}{1} \\
&\phantom{equals}= \frac{\Pochhsymb{1-M}{m} \Pochhsymb{1/2-M}{m}}{\Pochhsymb{(d+1)/2}{m}} \frac{\gammafcn( m + (d+1)/2 ) \gammafcn( 2M - m + d/2 - 1)}{\gammafcn( M - 1 + (d+1)/2 ) \gammafcn( M + d/2 )} \\
&\phantom{equals}= (-1)^m 2^{2M-1} \frac{\Pochhsymb{d/2}{2M-1}}{\Pochhsymb{d}{2M-1}} \frac{\Pochhsymb{1-M}{m} \Pochhsymb{1/2-M}{m}}{\Pochhsymb{2-d/2-2M}{m}}.
\end{align*}
Thus
\begin{align} 
\mathcal{Q}_{M-1}( \PT{x}, \PT{y} ) 
&= \frac{2^{2M-1}}{2M-1} \frac{\Pochhsymb{d/2}{2M-1}}{\Pochhsymb{d}{2M-1}} \sum_{m=0}^{M-1} \frac{\Pochhsymb{1-M}{m} \Pochhsymb{1-M}{m} \Pochhsymb{1/2-M}{m}}{\Pochhsymb{3/2-M}{m} \Pochhsymb{2-d/2-2M}{m} \, m!} \left( \frac{1 - \PT{x} \cdot \PT{y}}{2} \right)^m \\
&= \frac{2^{2M-1}}{2M-1} \frac{\Pochhsymb{d/2}{2M-1}}{\Pochhsymb{d}{2M-1}} \Hypergeom{3}{2}{1-M,1/2-M,1-M}{3/2-M,2-d/2-2M}{\frac{1 - \PT{x} \cdot \PT{y}}{2}}. \label{eq:Q.M.m.1.aux}
\end{align}

The constant $c_d(M)$ can be obtained from Theorem~\ref{thm:integral.general.case}.

Since $\mathcal{Q}_{M-1}( \PT{x}, \PT{y} )$ is a zonal function (depending on $\PT{x} \cdot \PT{y}$), using \eqref{eq:Q.M.m.1.aux}, by the Funk-Hecke formula and subsequent change of variable $2v = 1 - u$ we get
\begin{align*}
&\int_{\mathbb{S}^d} \mathcal{Q}_{M-1}( \PT{x}, \PT{y} ) \dd \sigma_d( \PT{x} ) \\
&\phantom{equals}= \frac{2^{2M-1}}{2M-1} \frac{\Pochhsymb{d/2}{2M-1}}{\Pochhsymb{d}{2M-1}} \frac{\omega_{d-1}}{\omega_d} \int_{-1}^1 \left( 1 - u^2 \right)^{d/2-1} \Hypergeom{3}{2}{1-M,1/2-M,1-M}{3/2-M,2-d/2-2M}{\frac{1 - u}{2}} \dd u \\
&\phantom{equals}= \frac{2^{2M-1}}{2M-1} \frac{\Pochhsymb{d/2}{2M-1}}{\Pochhsymb{d}{2M-1}} 2^{d-1} \frac{\omega_{d-1}}{\omega_d} \int_{0}^1 v^{d/2-1} \left( 1 - v \right)^{d/2-1} \Hypergeom{3}{2}{1-M,1/2-M,1-M}{3/2-M,2-d/2-2M}{v} \dd v \\
&\phantom{equals}= \frac{2^{2M-1}}{2M-1} \frac{\Pochhsymb{d/2}{2M-1}}{\Pochhsymb{d}{2M-1}} 2^{d-1} \frac{\omega_{d-1}}{\omega_d} \frac{\gammafcn(d/2) \gammafcn(d/2)}{\gammafcn(d)} \Hypergeom{4}{3}{1-M,1/2-M,1-M,d/2}{3/2-M,2-d/2-2M,d}{1}.
\end{align*}
The last step follows from the terminating series expansion of the hypergeometric polynomial or \cite[Eq.~7.2.3.9]{PrBrMa1990III}. Using \eqref{eq:C.d} and simplification gives
\begin{equation*}
\int_{\mathbb{S}^d} \mathcal{Q}_{M-1}( \PT{x}, \PT{y} ) \dd \sigma_d( \PT{x} ) = \frac{2^{2M-1}}{2M-1} \frac{\Pochhsymb{d/2}{2M-1}}{\Pochhsymb{d}{2M-1}} \Hypergeom{4}{3}{1-M,1/2-M,1-M,d/2}{3/2-M,2-d/2-2M,d}{1}.
\end{equation*}
The $_4\HyperF_3$-hypergeometric is Saalschutzian (that is, $1$ + the sum of the upper parameters equals he sum of the lower parameters). Application of Whipple's transformation \cite[Eq.~16.4.14]{DLMF2011.08.29} yields
\begin{equation*}
\begin{split}
&\Hypergeom{4}{3}{1-M,1/2-M,1-M,d/2}{d,3/2-M,2-d/2-2M}{1} = \frac{\Pochhsymb{1}{M-1} \Pochhsymb{3/2-d/2-M}{M-1}}{\Pochhsymb{2-d/2-2M}{M-1} \Pochhsymb{3/2-M}{M-1}} \\
&\phantom{equals\pm}\times \Hypergeom{4}{3}{1-M,1/2-M,M+d-1,d/2}{d,1-M,(d+1)/2}{1}.
\end{split}
\end{equation*}
(Note that one must not simplify the formula by ``cancelling'' $1-M$ in the hypergeometric polynomial (evaluated at $1$) at the right-hand side.) From
\begin{equation*}
\frac{2^{2M-1}}{2M-1} \frac{\Pochhsymb{d/2}{2M-1}}{\Pochhsymb{d}{2M-1}} \frac{\Pochhsymb{1}{M-1} \Pochhsymb{3/2-d/2-M}{M-1}}{\Pochhsymb{2-d/2-2M}{M-1} \Pochhsymb{3/2-M}{M-1}} = (-1)^{M-1} \frac{(M-1)!}{\Pochhsymb{3/2}{M-1}}
\end{equation*}
we obtain the result.

For $d = 2$, we get with the help of Mathematica
\begin{align*}
\int_{\mathbb{S}^2} \int_{\mathbb{S}^2} \mathcal{Q}_{M-1}( \PT{x}, \PT{y} ) \, \dd \sigma_2( \PT{x} ) \dd \sigma_2( \PT{y} ) 
&= (-1)^{M-1} \frac{(M-1)!}{\Pochhsymb{3/2}{M-1}} \Hypergeom{4}{3}{1-M,1/2-M,M+1,1}{1-M,3/2,2}{1} \\
&= (-1)^{M-1} \frac{(M-1)!}{\Pochhsymb{3/2}{M-1}} \sum_{n=0}^{M-1} \frac{\Pochhsymb{1/2-M}{n} \Pochhsymb{M+1}{n}}{\Pochhsymb{3/2}{n} \Pochhsymb{2}{n}} \\
&= \frac{2^{2M}}{M^2 \left( 2 M + 1 \right)^2} - \frac{\gammafcn(1/2-M) \gammafcn(M)}{M \left( 2 M + 1 \right)^2 \sqrt{\pi}} \\
&= \frac{2^{2M}}{M^2 \left( 2 M + 1 \right)^2} + \frac{(-1)^{M-1}}{M \left( 2 M + 1 \right)^2} \frac{(M-1)!}{\Pochhsymb{1/2}{M}}.
\end{align*}
\end{proof}

\appendix

\section{The reproducing kernel Hilbert space $\mathcal{H}_\beta(\mathcal{K}_{\beta}, \mathbb{S}^d)$}\label{sec_appendix_a}

In this section we show that all functions in $\mathcal{H}_\beta(\mathcal{K}_\beta, \mathbb{S}^d)$ have an integral representation of the form \eqref{eq:integral.representation.f.i}. Note that the set of functions $\sum_{i=1}^n a_i \mathcal{K}_\beta(\cdot, \PT{y}_i)$ is dense in $\mathcal{H}_\beta$. Let $f \in \mathcal{H}_\beta$. Then there exists a sequence of functions $f_1, f_2, \ldots \in \mathcal{H}_\beta$ with $$f_n(\PT{x}) = \sum_{i=1}^n a_{i,n} \mathcal{K}_\beta(\PT{x}, \PT{y}_{i,n})$$ such that $\|f-f_n\|_{\mathcal{K}_\beta} \rightarrow 0$ as $n \rightarrow \infty$. Further let
\begin{equation*}
g_n(\PT{z}, t) =  \sum_{i=1}^n a_{i,n} (\PT{z} \cdot \PT{y}_{i,n} - t)_+^{\beta-1}.
\end{equation*}
Thus
\begin{equation*}
f_n(\PT{x}) = \int_{-1}^1 \int_{\mathbb{S}^d} g_n(\PT{z},t) (\PT{z}  \cdot \PT{x} - t)_+^{\beta-1} \,\mathrm{d} \sigma_d(\PT{z}) \,\mathrm{d} t.
\end{equation*}
Then elementary algebra shows that
\begin{align*}
\|f_n-f_m\|_{\mathcal{K}_\beta}^2 = \langle f_n-f_m, f_n-f_m \rangle_{\mathcal{K}_\beta} & =\int_{-1}^1 \int_{\mathbb{S}^d} \left|g_n(\PT{z},t) - g_m(\PT{z},t) \right|^2 \,\mathrm{d} \sigma_d(\PT{z}) \,\mathrm{d} t.
\end{align*}
Since the sequence $(f_n)$ converges in the Hilbert space $\mathcal{H}_\beta$, it is a Cauchy sequence. Thus $\|f_n-f_m\|_{\mathcal{K}_\beta} \to 0$ as $n,m \rightarrow \infty$ and therefore the sequence $(g_n)$ is a Cauchy sequence in the Hilbert space $L_2(\mathbb{S}^d \times [-1,1])$. Thus the sequence $(g_n)$ converges to a function $g$ in $L_2(\mathbb{S}^d \times [-1,1])$. Let
\begin{equation*}
\widetilde{f}(\PT{x}) = \int_{-1}^1 \int_{\mathbb{S}^d} g(\PT{z},t)  (\PT{z} \cdot \PT{x} - t)_+^{\beta-1} \, \mathrm{d} \sigma_d(\PT{z}) \,\mathrm{d} t.
\end{equation*}
Then we have
\begin{equation*}
0 \le \|f-\widetilde{f}\|_{\mathcal{K}_\beta} \le \|f-f_n\|_{\mathcal{K}_\beta} + \|f_n - \widetilde{f}\|_{\mathcal{K}_\beta}.
\end{equation*}
We have $\|f-f_n\|_{\mathcal{K}_\beta} \to 0$ as $n \to \infty$ by the definition of the $f_n$ and
\begin{equation*}
\|f_n - \widetilde{f}\|_{\mathcal{K}_\beta}^2 = \int_{-1}^1 \int_{\mathbb{S}^d} \left|g_n(\PT{z},t) - g(\PT{z},t) \right|^2 \,\mathrm{d} \sigma_d(\PT{z}) \,\mathrm{d} t \to 0
\end{equation*}
as $n \to \infty$, since $g$ is defined as the limit of the sequence $(g_n)$ in $L_2(\mathbb{S}^d \times [-1,1])$. Thus we have $\|f-\widetilde{f}\|_{\mathcal{K}_\beta} = 0$, which shows the statement.

\section{Auxiliary results}

\begin{prop} \label{prop:Appell.F.identity}
Let $| a / A | + | b / A | < 1$. Then
\begin{equation*}
\AppellFtwo{\alpha, \beta, \beta^\prime}{2\beta, 2 \beta^\prime}{\frac{a}{A}, \frac{b}{A}} = \left( \frac{2A}{2A-a-b} \right)^\alpha \AppellFfour{\alpha/2, ( \alpha + 1 ) / 2}{\beta+1/2,\beta^\prime+1/2}{\left( \frac{a}{2A - a - b} \right)^2, \left( \frac{b}{2A - a - b} \right)^2}.
\end{equation*}
\end{prop}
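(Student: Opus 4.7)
The plan is to prove the identity by evaluating both sides against the Euler integral representation of the Appell $F_2$ and recognizing the expanded integrand as an $F_4$ series in disguise. This works because the special lower parameters $2\beta, 2\beta'$ (twice the corresponding upper ones) allow a symmetric substitution that forces only even powers to survive integration — the classical mechanism behind Gauss's quadratic transformations, extended here to two variables.

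First I would invoke the Euler double-integral representation (valid for $\re\beta,\re\beta'>0$)
\begin{equation*}
F_2(\alpha;\beta,\beta';2\beta,2\beta';x,y)=\frac{\gammafcn(2\beta)\gammafcn(2\beta')}{\gammafcn(\beta)^2\gammafcn(\beta')^2}\int_0^1\!\!\int_0^1 \frac{u^{\beta-1}(1-u)^{\beta-1} v^{\beta'-1}(1-v)^{\beta'-1}}{(1-ux-vy)^{\alpha}}\,\dd u\,\dd v,
\end{equation*}
with $x=a/A$, $y=b/A$. Then I would perform the symmetric substitutions $u=(1+s)/2$, $v=(1+t)/2$, so that $u(1-u)=(1-s^2)/4$, $v(1-v)=(1-t^2)/4$, and $1-ux-vy=\zeta-\tfrac12 xs-\tfrac12 yt$ with $\zeta:=1-\tfrac{x+y}{2}=(2A-a-b)/(2A)$. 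Extracting $\zeta^{-\alpha}$ gives a factor of $\bigl(2A/(2A-a-b)\bigr)^{\alpha}$, which is the desired prefactor on the right-hand side.

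Next I would expand $(1-(xs+yt)/(2\zeta))^{-\alpha}=\sum_k \frac{(\alpha)_k}{k!(2\zeta)^{-k}}(xs+yt)^k$ by the binomial theorem, then use the multinomial expansion $(xs+yt)^k=\sum_{j}\binom{k}{j}(xs)^j(yt)^{k-j}$. Because the weight $(1-s^2)^{\beta-1}(1-t^2)^{\beta'-1}$ is even in each variable, only terms with both $j$ and $k-j$ even survive; writing $j=2m$, $k-j=2n$, and applying the duplication identities $(\alpha)_{2(m+n)}=4^{m+n}(\alpha/2)_{m+n}((\alpha+1)/2)_{m+n}$, $(2m)!=4^m m!(1/2)_m$, $(2n)!=4^n n!(1/2)_n$ gives the double series
\begin{equation*}
\sum_{m,n\ge 0}\frac{(\alpha/2)_{m+n}((\alpha+1)/2)_{m+n}}{(1/2)_m(1/2)_n\,m!\,n!}\Bigl(\frac{xs}{2\zeta}\Bigr)^{2m}\Bigl(\frac{yt}{2\zeta}\Bigr)^{2n}.
\end{equation*}

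Interchanging sum and integral (justified by the assumption $|a/A|+|b/A|<1$, which guarantees absolute convergence on a neighborhood of the integration region), the remaining integrals factor as beta integrals
\begin{equation*}
\int_{-1}^1 s^{2m}(1-s^2)^{\beta-1}\,\dd s=\frac{\sqrt{\pi}\,(1/2)_m\,\gammafcn(\beta)}{\gammafcn(\beta+1/2)(\beta+1/2)_m},
\end{equation*}
and analogously for the $t$-integral. The factors $(1/2)_m$ and $(1/2)_n$ from these beta integrals cancel exactly the corresponding ones in the denominator, leaving $(\beta+1/2)_m(\beta'+1/2)_n$ in the denominator — precisely the shape of an $F_4$ series in the arguments $(a/(2A-a-b))^2$ and $(b/(2A-a-b))^2$.

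The final step is constant collection: combining the prefactor $\gammafcn(2\beta)\gammafcn(2\beta')/(\gammafcn(\beta)^2\gammafcn(\beta')^2)$ with the Jacobians $4^{1-\beta-\beta'-1}$ (from the two substitutions) and with $\pi\gammafcn(\beta)\gammafcn(\beta')/(\gammafcn(\beta+1/2)\gammafcn(\beta'+1/2))$ from the beta integrals, and applying the Legendre duplication formula $\gammafcn(2\beta)=2^{2\beta-1}\gammafcn(\beta)\gammafcn(\beta+1/2)/\sqrt{\pi}$ twice, all factors cancel to $1$. The main obstacle is really only a bookkeeping one: carefully tracking the powers of $2$ and $\pi$ through the duplication identities so that the overall constant collapses; the analytic step of exchanging summation and integration is routine under the stated absolute convergence condition.
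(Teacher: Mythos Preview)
Your proof is correct and takes a genuinely different route from the paper's. The paper works purely with series: it writes the $F_2$ as a single sum over $m$ with an inner ${}_2\HyperF_1$ in $b/A$, applies the one-variable Gauss quadratic transformation ${}_2\HyperF_1(a,\beta';2\beta';z)=(1-z/2)^{-a}\,{}_2\HyperF_1(a/2,(a+1)/2;\beta'+1/2;(z/(2-z))^2)$, then repeats the same trick in the other variable, and finally recognises the resulting double series as an $F_4$. No integrals, no constants to track --- the identity is reduced to two invocations of a standard entry in the DLMF.

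Your approach instead proves the two-variable quadratic transformation \emph{ab initio} via the Euler double integral, using the symmetric substitution $u=(1+s)/2$, $v=(1+t)/2$ and parity to kill odd terms. This is more self-contained (you do not need to quote the one-variable quadratic transformation) and makes the mechanism transparent, at the cost of the Gamma-function bookkeeping you flag. It also formally requires $\re\beta,\re\beta'>0$ for the integral representation, so a final appeal to analytic continuation in $\beta,\beta'$ would be needed for the full parameter range; the paper's series argument avoids this. Two small slips: in your binomial expansion the factor should be $(2\zeta)^{-k}$ multiplying $(xs+yt)^k$ (you wrote $(2\zeta)^{-k}$ in the denominator), and the Jacobian factor is $4^{1-\beta-\beta'}$, not $4^{-\beta-\beta'}$ --- with the correct power the constants do collapse to $1$ as you claim.
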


\begin{proof}
Since $| a / A | + | b / A | < 1$, one can use the series expansion of the Appell function. Reordering gives
\begin{align*}
f( a/A, b/A ) 
&= \sum_{m=0}^\infty \sum_{n=0}^\infty \frac{\Pochhsymb{\alpha}{m+n} \Pochhsymb{\beta}{m} \Pochhsymb{\beta^\prime}{m}}{\Pochhsymb{2\beta}{m} \Pochhsymb{2\beta^\prime}{n} \, m! n!} \left( \frac{a}{A} \right)^m \left( \frac{b}{A} \right)^n \\
&= \sum_{m=0}^\infty \frac{\Pochhsymb{\alpha}{m} \Pochhsymb{\beta}{m}}{\Pochhsymb{2\beta}{m} \, m!} \left[ \sum_{n=0}^\infty \frac{\Pochhsymb{m+\alpha}{n} \Pochhsymb{\beta^\prime}{m}}{\Pochhsymb{2\beta^\prime}{n} \, n!} \left( \frac{b}{A} \right)^n \right] \left( \frac{a}{A} \right)^m \\
&= \sum_{m=0}^\infty \frac{\Pochhsymb{\alpha}{m} \Pochhsymb{\beta}{m}}{\Pochhsymb{2\beta}{m} \, m!} \Hypergeom{2}{1}{m+\alpha, \beta^\prime}{2\beta^\prime}{\frac{b}{A}} \left( \frac{a}{A} \right)^m.
\end{align*}
Since $|\phase( 1 - b / A ) | < \pi$, the quadratic transformation of variable \cite[Eq.~15.8.13]{DLMF2011.08.29} can be applied; that is
\begin{equation*}
f( a/A, b/A ) = \left( \frac{2A}{2A-b} \right)^\alpha \sum_{m=0}^\infty \frac{\Pochhsymb{\alpha}{m} \Pochhsymb{\beta}{m}}{\Pochhsymb{2\beta}{m} \, m!} \Hypergeom{2}{1}{( m+\alpha ) / 2, ( m+\alpha+1 ) / 2}{\beta^\prime + 1/2}{\left( \frac{b}{2A-b} \right)^2} \left( \frac{a}{A} \right)^m.
\end{equation*}
Invoking the series expansion of the hypergeometric function and identities for the Pochhammer symbol, one gets
\begin{align*}
\frac{f( a/A, b/A )}{\left( \frac{2A}{2A-b} \right)^\alpha} 
&= \sum_{m=0}^\infty \sum_{n=0}^\infty \frac{\Pochhsymb{\alpha}{m} \Pochhsymb{\beta}{m} \Pochhsymb{( m + \alpha ) / 2}{n} \Pochhsymb{( m + \alpha + 1 ) / 2}{n}}{\Pochhsymb{2\beta}{m} \Pochhsymb{\beta^\prime + 1/2}{n} \, m! n!} \left( \frac{2a}{2A-b} \right)^m \left( \frac{b}{2A-b}\right)^{2n} \\
&= \sum_{m=0}^\infty \sum_{n=0}^\infty \frac{\Pochhsymb{\alpha}{m} \Pochhsymb{\beta}{m} 2^{-2n} \Pochhsymb{m + \alpha}{2n} }{\Pochhsymb{2\beta}{m} \Pochhsymb{\beta^\prime + 1/2}{n} \, m! n!} \left( \frac{2a}{2A-b} \right)^m \left( \frac{b}{2A-b}\right)^{2n} \\
&= \sum_{m=0}^\infty \sum_{n=0}^\infty \frac{\Pochhsymb{\alpha}{m+2n} \Pochhsymb{\beta}{m}}{\Pochhsymb{2\beta}{m} \Pochhsymb{\beta^\prime + 1/2}{n} \, m! n!} \left( \frac{2a}{2A-b} \right)^m \left( \frac{b/2}{2A-b}\right)^{2n} \\
&= \sum_{n=0}^\infty \frac{\Pochhsymb{\alpha}{2n}}{\Pochhsymb{\beta^\prime+1/2}{n} \, n!} \Hypergeom{2}{1}{2n+\alpha,\beta}{2\beta}{\frac{2a}{2A-b}} \left( \frac{b/2}{2A-b}\right)^{2n}.
\end{align*}
Application of \cite[Eq.~15.8.13]{DLMF2011.08.29} gives
\begin{equation*}
\begin{split}
f( a/A, b/A ) 
&= \left( \frac{2A}{2A-a-b} \right)^\alpha \sum_{n=0}^\infty \frac{\Pochhsymb{\alpha}{2n}}{\Pochhsymb{\beta^\prime+1/2}{n} \, n!} \\
&\phantom{=\pm}\times \Hypergeom{2}{1}{( 2n + \alpha ) / 2, ( 2n + \alpha + 1 ) / 2}{\beta+1/2}{\left( \frac{2a}{4A - 2a - 2a} \right)^2} \left( \frac{b/2}{2A-a-b} \right)^{2n}.
\end{split}
\end{equation*}
Reversing to series expansions and invoking properties of the Pochhammer symbol, we arrive at
\begin{align*}
\frac{f( a/A, b/A )}{\left( \frac{2A}{2A-a-b} \right)^\alpha} 
&= \sum_{m=0}^\infty \sum_{n=0}^\infty \frac{\Pochhsymb{\alpha}{2m+2n}}{\Pochhsymb{\beta+1/2}{m} \Pochhsymb{\beta^\prime+1/2}{m} \, m! n!} \left( \frac{a/2}{2A-a-b} \right)^{2m} \left( \frac{b/2}{2A-a-b} \right)^{2n} \\
&= \sum_{m=0}^\infty \sum_{n=0}^\infty \frac{\Pochhsymb{\alpha/2}{m+n} \Pochhsymb{(\alpha+1)/2}{m+n}}{\Pochhsymb{\beta+1/2}{m} \Pochhsymb{\beta^\prime+1/2}{m} \, m! n!} \left( \frac{a}{2A-a-b} \right)^{2m} \left( \frac{b}{2A-a-b} \right)^{2n}.
\end{align*}
By \cite[Eq.~16.13.4]{DLMF2011.08.29} the double sum is an Appel $\HyperF_4$-function. The result follows.
\end{proof}

Next, we expand a hypergeometric polynomial in terms of normalized Gegenbauer polynomials $P_k^{(d)}(t) = \GegenbauerC_k^{(d-1)/2}(t) / \GegenbauerC_k^{(d-1)/2}(1)$.

\begin{prop} \label{prop:hypergeometric.polynomial.expansion}
Let $c$ be not an integer $\leq 0$. Then
\begin{equation*}
Q_n(t) \DEF \Hypergeom{2}{1}{-n,b}{c}{\frac{1-t}{2}} = \sum_{k=0}^n \frac{a_{k}}{Z(d,k)} \, Z(d,k) P_k^{(d)}(t),
\end{equation*}
where
\begin{equation*}
\frac{a_{k}}{Z(d,k)} = \frac{(-1)^k\Pochhsymb{-n}{k} \Pochhsymb{b}{k}\Pochhsymb{d/2}{k}}{\Pochhsymb{c}{k}\Pochhsymb{d}{2k}} \Hypergeom{3}{1}{k-n,k+b,k+d/2}{k+c,2k+d}{1}.
\end{equation*}
\end{prop}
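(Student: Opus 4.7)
The plan is to exploit the $L^2$-orthogonality of the normalized Gegenbauer polynomials with respect to the weight $(1-t^2)^{d/2-1}$, which (via the Funk--Hecke formula and the Addition Theorem for spherical harmonics) takes the form $\frac{\omega_{d-1}}{\omega_d}\int_{-1}^1 P_m^{(d)}(t)\,P_k^{(d)}(t)\,(1-t^2)^{d/2-1}\,\dd t = \delta_{mk}/Z(d,k)$. Multiplying the conjectured expansion by $P_k^{(d)}(t)(1-t^2)^{d/2-1}$ and integrating isolates $a_k = Z(d,k)(\omega_{d-1}/\omega_d)\int_{-1}^1 Q_n(t)\,P_k^{(d)}(t)(1-t^2)^{d/2-1}\,\dd t$. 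The crucial input is the hypergeometric representation $P_k^{(d)}(t) = \HypergeomSMALL{2}{1}{-k,k+d-1}{d/2}{(1-t)/2}$ (the Gegenbauer polynomial written as a Jacobi polynomial with $\alpha = \beta = (d-2)/2$), so that after the substitution $u = (1-t)/2$ both factors in the integrand live on $[0,1]$ in a common variable.

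With this substitution and the expansion $Q_n(1-2u) = \sum_{m=0}^n q_m u^m$, where $q_m = \Pochhsymb{-n}{m}\Pochhsymb{b}{m}/[\Pochhsymb{c}{m}\,m!]$, a term-wise application of Euler's integral representation of a terminating ${}_{3}F_{2}$ at unity yields
\begin{equation*}
\int_0^1 u^{m+d/2-1}(1-u)^{d/2-1}\Hypergeom{2}{1}{-k,k+d-1}{d/2}{u}\,\dd u = \frac{\gammafcn(m+d/2)\gammafcn(d/2)}{\gammafcn(m+d)}\Hypergeom{3}{2}{-k,k+d-1,m+d/2}{d/2,m+d}{1}.
\end{equation*}
A short check confirms that the Saalsch\"utz relation $d/2 + (m+d) = 1 + (-k) + (k+d-1) + (m+d/2)$ is satisfied, so the Pfaff--Saalsch\"utz theorem collapses this inner ${}_{3}F_{2}$ to the single quotient $(-1)^k\,m!/[(m-k)!\,\Pochhsymb{m+d}{k}]$, which also automatically encodes the vanishing for $m < k$ (whence $a_k = 0$ for $k > n$).

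It remains to reindex the outer sum by $m = k+j$, split every Pochhammer factor using $\Pochhsymb{a}{k+j} = \Pochhsymb{a}{k}\Pochhsymb{a+k}{j}$ together with the analogous $\gammafcn$-identities, and collect the $j$-dependent pieces. These combine into precisely $\HypergeomSMALL{3}{2}{k-n,k+b,k+d/2}{k+c,2k+d}{1}$, while the $j$-independent prefactor simplifies, after invoking $\omega_{d-1}/\omega_d = \gammafcn((d+1)/2)/[\sqrt{\pi}\,\gammafcn(d/2)]$ and Legendre's duplication formula $\gammafcn(d) = 2^{d-1}\pi^{-1/2}\gammafcn(d/2)\gammafcn((d+1)/2)$, to exactly $(-1)^k\Pochhsymb{-n}{k}\Pochhsymb{b}{k}\Pochhsymb{d/2}{k}/[\Pochhsymb{c}{k}\Pochhsymb{d}{2k}]$. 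The delicate point is the Pochhammer bookkeeping in this last step: one must keep $\Pochhsymb{m+d}{k}$ as a single factor so that after the shift $m\to k+j$ its product with $1/\gammafcn(m+d)$ becomes $1/\gammafcn(2k+d+j)$, producing exactly $\Pochhsymb{2k+d}{j}$ in the denominator; splitting it prematurely would leave a spurious $\Pochhsymb{k+d}{j}$ and turn the answer into a ${}_{4}F_{3}$ rather than the claimed ${}_{3}F_{2}$.
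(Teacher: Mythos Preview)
Your proof is correct but follows a genuinely different route from the paper. The paper uses Rodrigues' formula
\[
P_k^{(d)}(t)\,(1-t^2)^{d/2-1}=\frac{(-1)^k}{2^k\Pochhsymb{d/2}{k}}\bigl\{(1-t^2)^{k+d/2-1}\bigr\}^{(k)},
\]
integrates by parts $k$ times, and then differentiates $Q_n$ via the standard rule for hypergeometric functions. After the substitution $2x=1-t$ the resulting integrals are plain beta integrals, and the terminating sum over the remaining index is read off directly as the ${}_3F_2(1)$ in the statement. No summation theorem is invoked.

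Your approach instead writes $P_k^{(d)}$ in its Jacobi/hypergeometric form, recognizes the resulting moment integral as an Euler-type ${}_3F_2(1)$, and then collapses that inner ${}_3F_2$ using Pfaff--Saalsch\"utz before reassembling the outer sum. This trades the Rodrigues/integration-by-parts machinery for a single classical summation identity. The paper's argument is more elementary in that it never calls on Pfaff--Saalsch\"utz (or any nontrivial hypergeometric summation), while yours is shorter once Pfaff--Saalsch\"utz is granted and makes the vanishing for $m<k$ transparent through the factor $\Pochhsymb{-m}{k}$. Both land on exactly the same prefactor after the duplication-formula simplification you describe; your bookkeeping remark about keeping $\Pochhsymb{m+d}{k}$ intact so that $1/[\Gamma(m+d)\Pochhsymb{m+d}{k}]=1/\Gamma(m+d+k)$ produces $\Pochhsymb{2k+d}{j}$ after the shift $m=k+j$ is the right observation.
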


\begin{proof}
The polynomial $Q_n(t)$ has degree $n$ and can therefore be expressed in terms of the family of normalized Gegenbauer polynomials (ultraspherical polynomials) $\{ P_0^{(d)}, \dots, P_n^{(d)}\}$
\begin{equation*}
Q_n(t) = \sum_{k=0}^n a_{k} \, P_k^{(d)}(t) = \sum_{k=0}^n \frac{a_{k}}{Z(d,k)} \, Z(d,k) P_k^{(d)}(t),
\end{equation*}
where the coefficients $a_k$ depend on $Q_n$ and
\begin{equation*}
\frac{a_{k}}{Z(d,k)} = \frac{\omega_{d-1}}{\omega_d} \int_{-1}^1 Q_n(t) \, P_k^{(d)}(t) \left( 1 - t^2 \right)^{d/2-1} \dd t.
\end{equation*}
(Note that $\frac{\omega_{d-1}}{\omega_d} \int_{-1}^1 P_k^{(d)}(t) P_k^{(d)}(t) ( 1 - t^2 )^{d/2-1} \dd t = 1 / Z(d,k)$; cf., for example, \cite{DLMF2011.08.29}.)

Using Rodrigues' formulas (cf. \cite[Eq.~18.5.5]{DLMF2011.08.29})
\begin{equation*}
P_k^{(d)}(t) \left( 1 - t^2 \right)^{d/2-1}= \frac{(-1)^k}{2^k \Pochhsymb{d/2}{k}} \left\{ \left( 1 - t^2 \right)^{k + d/2 - 1} \right\}^{(k)}
\end{equation*}
and $k$-times successive integration by parts, we arrive at
\begin{align*}
\frac{a_{k}}{Z(d,k)} 
&= \frac{\omega_{d-1}}{\omega_d} \frac{(-1)^k}{2^k \Pochhsymb{d/2}{k}} \int_{-1}^1 Q_n(t) \left\{ \left( 1 - t^2 \right)^{k + d/2 - 1} \right\}^{(k)} \dd t \\
&= \frac{\omega_{d-1}}{\omega_d} \frac{1}{2^k \Pochhsymb{d/2}{k}} \int_{-1}^1 Q_n^{(k)}(t) \left( 1 - t^2 \right)^{k + d/2 - 1} \dd t.
\end{align*}

The differentiation rule for hypergeometric functions \cite[Eq.~15.5.1]{DLMF2011.08.29} yields
\begin{equation*}
Q_n^{(k)}(t) = \frac{(-1)^k}{2^k} \frac{\Pochhsymb{-n}{k} \Pochhsymb{b}{k}}{\Pochhsymb{c}{k}} \Hypergeom{2}{1}{k-n,k+b}{k+c}{\frac{1-t}{2}}.
\end{equation*}

Using the (terminating) series expansion of $Q_n^{(k)}$, we get 
\begin{equation*}
\frac{a_{k}}{Z(d,k)} = \frac{\omega_{d-1}}{\omega_d} \frac{1}{2^k \Pochhsymb{d/2}{k}} \frac{(-1)^k}{2^k} \frac{\Pochhsymb{-n}{k} \Pochhsymb{b}{k}}{\Pochhsymb{c}{k}} \sum_{\nu=0}^{n-k} \frac{\Pochhsymb{k-n}{\nu} \Pochhsymb{k+b}{\nu}}{\Pochhsymb{k+c}{\nu} \nu!} \, b_\nu,
\end{equation*}
where the standard substitution $2x = 1 - t$ leads to a beta integral
\begin{align*}
b_\nu 
&\DEF \int_{-1}^1 \left( \frac{1 - t}{2} \right)^\nu \left( 1 - t^2 \right)^{k + d/2 - 1} \dd t \\
&= 2^{2k+d-1} \int_0^1 x^{\nu + k + d/2 - 1} \left( 1 - x \right)^{k + d/2 - 1} \dd x \\
&= 2^{2k+d-1} \frac{\gammafcn(\nu + k + d/2) \gammafcn( k + d / 2 )}{\gammafcn( \nu + 2 k + d)} \\
&= 2^{2k+d-1} \frac{\gammafcn(k + d/2) \gammafcn( d / 2 )}{\gammafcn( 2 k + d)} \frac{\Pochhsymb{k + d/2}{\nu} \Pochhsymb{d / 2}{k}}{\Pochhsymb{2 k + d}{\nu}} \\
&= 2^{d-1} \frac{\gammafcn(d/2) \gammafcn( d / 2 )}{\gammafcn( d )} 2^{2k} \frac{\Pochhsymb{d/2}{k}}{\Pochhsymb{d}{2k}} \frac{\Pochhsymb{k + d/2}{\nu} \Pochhsymb{d / 2}{k}}{\Pochhsymb{2 k + d}{\nu}}.
\end{align*}
Hence
\begin{align*}
\frac{a_{k}}{Z(d,k)} 
&= 2^{d-1} \frac{\omega_{d-1}}{\omega_d} \frac{\gammafcn(d/2) \gammafcn( d / 2 )}{\gammafcn( d )} \frac{(-1)^k\Pochhsymb{-n}{k} \Pochhsymb{b}{k}\Pochhsymb{d/2}{k}}{\Pochhsymb{c}{k}\Pochhsymb{d}{2k}}  \sum_{\nu=0}^{n-k} \frac{\Pochhsymb{k-n}{\nu} \Pochhsymb{k+b}{\nu}\Pochhsymb{k + d/2}{\nu}}{\Pochhsymb{k+c}{\nu} \Pochhsymb{2 k + d}{\nu} \nu!} \\
&= \frac{(-1)^k\Pochhsymb{-n}{k} \Pochhsymb{b}{k}\Pochhsymb{d/2}{k}}{\Pochhsymb{c}{k}\Pochhsymb{d}{2k}} \Hypergeom{3}{1}{k-n,k+b,k+d/2}{k+c,2k+d}{1}.
\end{align*}
\end{proof}

As a corollary to the last result we have the following
\begin{prop} \label{prop:hypergeometric.polynomial.expansion.specialized}
Let $3/2-\beta$ be not an integer $\leq 0$. Then
\begin{equation*}
\Hypergeom{2}{1}{-n,1-\beta}{3/2-\beta}{\frac{1-t}{2}} = \sum_{k=0}^n \frac{a_{k}}{Z(d,k)} \, Z(d,k) P_k^{(d)}(t),
\end{equation*}
where
\begin{equation*}
\frac{a_{k,n}}{Z(d,k)} = \frac{n! \Pochhsymb{1-\beta}{k}\Pochhsymb{d/2}{k}}{\Pochhsymb{3/2-\beta}{k}\Pochhsymb{d}{2k} (n-k)!} \Hypergeom{3}{2}{k-n,k+1-\beta,k+d/2}{k+3/2-\beta,2k+d}{1}.
\end{equation*}
Let $k + 1 - \beta > 0$. Then
\begin{equation*}
\begin{split}
\frac{a_{k,n}}{Z(d,k)} 
&= \frac{\gammafcn(3/2 - \beta) \gammafcn(d)}{\gammafcn(1 - \beta) \gammafcn(1/2) \gammafcn(d/2) \gammafcn(d/2)} \frac{n!}{\Pochhsymb{d/2}{k} (n-k)!} \\
&\phantom{=\pm}\times \int_0^1 \int_0^1 t^{k + 1 - \beta-1} x^{k + d/2-1} \left( 1 - t \right)^{1/2-1} \left( 1 - x \right)^{k + d/2-1} \left( 1 - x t \right)^{n-k} \dd x \dd t.
\end{split}
\end{equation*}
\end{prop}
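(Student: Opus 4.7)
The plan is to deduce the result as a direct corollary of the preceding Proposition~\ref{prop:hypergeometric.polynomial.expansion} combined with two successive Euler-type integral representations. I would first specialize that proposition to $b=1-\beta$ and $c=3/2-\beta$. Together with the elementary identity $(-1)^k \Pochhsymb{-n}{k} = n!/(n-k)!$, this immediately delivers the series expression
$$\frac{a_{k,n}}{Z(d,k)} = \frac{n!\,\Pochhsymb{1-\beta}{k}\Pochhsymb{d/2}{k}}{\Pochhsymb{3/2-\beta}{k}\Pochhsymb{d}{2k}\,(n-k)!}\,\Hypergeom{3}{2}{k-n,k+1-\beta,k+d/2}{k+3/2-\beta,2k+d}{1}.$$

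To obtain the integral representation under the additional assumption $k+1-\beta>0$, I would unfold the terminating $\Hypergeom{3}{2}{}{}{1}$ via two Euler-type integrals applied in sequence. First,
$$\Hypergeom{3}{2}{a_1,a_2,a_3}{b_1,b_2}{1} = \frac{\gammafcn(b_2)}{\gammafcn(a_3)\gammafcn(b_2-a_3)}\int_0^1 x^{a_3-1}(1-x)^{b_2-a_3-1}\Hypergeom{2}{1}{a_1,a_2}{b_1}{x}\,\dd x,$$
applied with $a_3=k+d/2$ and $b_2=2k+d$ (so $b_2-a_3=k+d/2$, with both Gamma arguments positive since $d\ge 2$); here the inner $_2F_1$ is in fact a polynomial in $x$ of degree $n-k$ since $a_1=k-n\le 0$. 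Second,
$$\Hypergeom{2}{1}{a,b}{c}{x}=\frac{\gammafcn(c)}{\gammafcn(b)\gammafcn(c-b)}\int_0^1 t^{b-1}(1-t)^{c-b-1}(1-xt)^{-a}\,\dd t,$$
applied with $a=k-n$, $b=k+1-\beta$, $c=k+3/2-\beta$ (so $c-b=1/2$); this step is where the hypothesis $k+1-\beta>0$ is used, precisely to guarantee convergence of the inner $t$-integral at the endpoint $t=0$. Combining the two representations yields exactly the claimed integrand $t^{k-\beta}(1-t)^{-1/2}x^{k+d/2-1}(1-x)^{k+d/2-1}(1-xt)^{n-k}$.

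The remaining step is the bookkeeping of Gamma and Pochhammer factors. Rewriting each $\Pochhsymb{c}{k}$ as $\gammafcn(c+k)/\gammafcn(c)$ in both the external prefactor and the two Euler normalization constants, one finds that the local factors $\gammafcn(k+1-\beta)$, $\gammafcn(k+3/2-\beta)$, and $\gammafcn(2k+d)$ cancel completely, one copy of $\gammafcn(k+d/2)$ cancels against a second copy, and the remaining $\gammafcn(k+d/2)$ is recombined via $\gammafcn(k+d/2)=\gammafcn(d/2)\Pochhsymb{d/2}{k}$. What survives is exactly the constant $\gammafcn(3/2-\beta)\gammafcn(d)/[\gammafcn(1-\beta)\gammafcn(1/2)\gammafcn(d/2)\gammafcn(d/2)]$ multiplied by $n!/[(n-k)!\,\Pochhsymb{d/2}{k}]$, as required. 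There is no substantial obstacle here; the only point requiring care is this final cancellation of Gamma factors, which is purely mechanical but must be tracked precisely.
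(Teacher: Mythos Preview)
Your proposal is correct and follows essentially the same approach as the paper: specialize Proposition~\ref{prop:hypergeometric.polynomial.expansion} with $b=1-\beta$, $c=3/2-\beta$, use $(-1)^k\Pochhsymb{-n}{k}=n!/(n-k)!$, and then unfold the ${}_3\HyperF_2$ via two successive Euler integral representations. The only cosmetic difference is the order in which you peel off parameters---you first extract the pair $(k+d/2,\,2k+d)$ and then $(k+1-\beta,\,k+3/2-\beta)$, whereas the paper does it in the reverse order---but this is immaterial and leads to the same double integral and the same Gamma-factor bookkeeping.
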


\begin{proof} 
On observing that $(-1)^k \Pochhsymb{-n}{k} = n! / (n - k)!$, the first form of the coefficients $a_{k,n} / Z(d,k)$ follows from Proposition~\ref{prop:hypergeometric.polynomial.expansion}. 

Let $k + 1 - \beta > 0$. Then a hypergeometric function as in the coefficient $a_{k,n} / Z(d,k)$ can be expressed as follows (cf. \cite[Eq.~16.5.2]{DLMF2011.08.29} and \cite[Eq.~15.6.1]{DLMF2011.08.29}). For $b > 0$ and $c > 0$ 
\begin{align*}
\Hypergeom{3}{2}{-n,b,c}{b+1/2,2c}{1} 
&= \frac{\gammafcn(b+1/2)}{\gammafcn(b) \gammafcn(1/2)} \int_0^1 t^{b-1} \left( 1 - t \right)^{1/2-1} \Hypergeom{2}{1}{-n, c}{2c}{t} \dd t \\
&= \frac{\gammafcn(b+1/2)}{\gammafcn(b) \gammafcn(1/2)} \frac{\gammafcn( 2c )}{\gammafcn(c) \gammafcn(c)} \int_0^1 \int_0^1 \frac{t^{b-1} x^{c-1} \left( 1 - t \right)^{1/2-1} \left( 1 - x \right)^{c-1}}{\left( 1 - x t \right)^{-n}} \dd x \dd t
\end{align*}
That is (with $b = k + 1 - \beta$, $c = k + d/2$, and $n = n - k$)
\begin{align*}
&\Hypergeom{3}{1}{k-n,k+1-\beta,k+d/2}{k+3/2-\beta,2k+d}{1} = \frac{\gammafcn(k + 3/2 - \beta)}{\gammafcn(k + 1 - \beta) \gammafcn(1/2)} \frac{\gammafcn( 2k + d )}{\gammafcn(k + d/2) \gammafcn(k + d/2)} \\
&\phantom{equals\pm}\times \int_0^1 \int_0^1 t^{k + 1 - \beta-1} x^{k + d/2-1} \left( 1 - t \right)^{1/2-1} \left( 1 - x \right)^{k + d/2-1} \left( 1 - x t \right)^{n-k} \dd x \dd t.
\end{align*}
The coefficient in front of the double integral equals
\begin{equation*}
\frac{\gammafcn(3/2 - \beta) \gammafcn(d)}{\gammafcn(1 - \beta) \gammafcn(1/2) \gammafcn(d/2) \gammafcn(d/2)} \frac{\Pochhsymb{3/2 - \beta}{k}}{\Pochhsymb{1 - \beta}{k}} \frac{\Pochhsymb{d}{2k}}{\Pochhsymb{d/2}{k}\Pochhsymb{d/2}{k}}.
\end{equation*}
The second form of the coefficient $a_{k,n} / Z(d,k)$ follows.
\end{proof}

\begin{prop} \label{prop:integral.identity}
Let $\re \gamma > \re \beta > 0$ and $\re( c - a - b ) > 0$ and $\re( c^\prime - a^\prime - b^\prime ) > 0$. Then
\begin{equation*}
\int_0^1 t^{\beta-1} \left( 1 - t \right)^{\gamma - \beta - 1} \Hypergeom{2}{1}{a,b}{c}{t} \Hypergeom{2}{1}{a^\prime,b^\prime}{c^\prime}{t} \dd t = \frac{\gammafcn(\beta) \gammafcn(\gamma-\beta)}{\gammafcn(\gamma)} \KampedeFerietA{1:1;1}{1:2;2}{\displaystyle \beta \\ \displaystyle \gamma}{\displaystyle a, b \\ \displaystyle c}{ \displaystyle a^\prime, b^\prime \\ \displaystyle c^\prime}{1,1}.
\end{equation*}
\end{prop}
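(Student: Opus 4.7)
The plan is to establish the identity by a direct series--integral manipulation: expand the two Gauss hypergeometric functions as power series, multiply them into a double series in $(m,n)$, integrate term by term against the weight $t^{\beta-1}(1-t)^{\gamma-\beta-1}$ using the Beta integral, and then recognize the resulting double series as the Kampé de Fériet function on the right-hand side.

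More concretely, I would first write
\begin{equation*}
\Hypergeom{2}{1}{a,b}{c}{t} \Hypergeom{2}{1}{a^\prime,b^\prime}{c^\prime}{t} = \sum_{m=0}^\infty \sum_{n=0}^\infty \frac{\Pochhsymb{a}{m}\Pochhsymb{b}{m}}{\Pochhsymb{c}{m}\,m!} \frac{\Pochhsymb{a^\prime}{n}\Pochhsymb{b^\prime}{n}}{\Pochhsymb{c^\prime}{n}\,n!}\, t^{m+n},
\end{equation*}
and then integrate termwise. Each term yields a Beta integral,
\begin{equation*}
\int_0^1 t^{\beta+m+n-1}(1-t)^{\gamma-\beta-1}\,\dd t = \betafcn(\beta+m+n,\gamma-\beta) = \frac{\gammafcn(\beta+m+n)\,\gammafcn(\gamma-\beta)}{\gammafcn(\gamma+m+n)},
\end{equation*}
so after factoring out $\gammafcn(\beta)\gammafcn(\gamma-\beta)/\gammafcn(\gamma)$ the ratio $\gammafcn(\beta+m+n)\gammafcn(\gamma)/(\gammafcn(\beta)\gammafcn(\gamma+m+n))$ becomes $\Pochhsymb{\beta}{m+n}/\Pochhsymb{\gamma}{m+n}$. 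Comparison with the definition \eqref{eq:Kampe.de.Feriet.function} with the single ``shared'' upper parameter $\beta$ (and shared lower parameter $\gamma$) on top, and the pairs $(a,b;c)$ and $(a',b';c')$ in the two lateral blocks, matches exactly the right-hand side Kampé de Fériet function evaluated at $(x,y)=(1,1)$.

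The only nontrivial point is justifying the interchange of summation and integration. The hypotheses $\re(c-a-b)>0$ and $\re(c'-a'-b')>0$ ensure, by Gauss's summation theorem, that each $_2\HyperF_1$ is absolutely convergent at $t=1$, so the product series is bounded by an absolutely convergent majorant near $t=1$; the factor $t^{\beta-1}(1-t)^{\gamma-\beta-1}$ is integrable on $(0,1)$ by $\re\gamma>\re\beta>0$. Hence Tonelli/Fubini applies to the expanded double series of positive (or absolutely bounded) terms, legitimizing the termwise integration. The same conditions guarantee convergence of the resulting Kampé de Fériet series at $(1,1)$, so the identity is meaningful. No step of this is deep; the ``main obstacle'' is purely bookkeeping—matching indices correctly so that the two upper-block parameters $a,b$ pair with the lower $c$ in one lateral group, and $a',b'$ pair with $c'$ in the other, with $\beta$ and $\gamma$ tied to the shared index $m+n$.
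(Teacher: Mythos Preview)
Your proof is correct and follows essentially the same approach as the paper: expand both $_2\HyperF_1$ series, interchange summation and integration (justified by absolute/uniform convergence on $[0,1]$ under the hypotheses), evaluate the resulting Beta integrals, and identify the double series as the Kamp\'e de F\'eriet function. Your justification of the interchange is in fact slightly more explicit than the paper's.
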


\begin{proof}
Under the given assumptions the hypergeometric functions have series expansions that are uniformly and absolutely convergent for all $x$ with $|x| \leq 1$. Let $A$ denote the integral we want to compute. Substituting these series expansions and interchanging summation with integration, we get 
\begin{equation*}
A = \sum_{m=0}^\infty \sum_{n=0}^\infty \frac{\Pochhsymb{a}{m}\Pochhsymb{b}{m}\Pochhsymb{a^\prime}{n}\Pochhsymb{b^\prime}{n}}{\Pochhsymb{c}{m}\Pochhsymb{c^\prime}{n} m! n!} \int_0^1 t^{m+n+\beta-1} \left( 1 - t \right)^{\gamma-\beta-1} \dd t.
\end{equation*}

Evaluating the beta integral and using Pochhammer symbols, we obtain
\begin{equation*}
A = \frac{\gammafcn(\beta) \gammafcn(\gamma-\beta)}{\gammafcn( \gamma )} \sum_{m=0}^\infty \sum_{n=0}^\infty \frac{\Pochhsymb{\beta}{m+n}}{\Pochhsymb{\gamma}{m+n}}  \frac{\Pochhsymb{a}{m}\Pochhsymb{b}{m}}{\Pochhsymb{c}{m}} \frac{\Pochhsymb{a^\prime}{n}\Pochhsymb{b^\prime}{n}}{\Pochhsymb{c^\prime}{n}} \frac{1^m 1^n}{m! n!}.
\end{equation*}
The double series above represents a Kamp{\'e} de F{\'e}riet function evaluated at unity arguments. The result follows using \eqref{eq:Kampe.de.Feriet.function}.
\end{proof}

\section{Limits}
\label{appendix:limits}

Because of \eqref{eq:repr.hypergeometric.1} limit and integration can be interchanged in the first integral in \eqref{eq:int.representation}.

For the second integral in \eqref{eq:int.representation} we observe that the function $\widetilde{G}( x; \eps )$ has the following Taylor expansion with integral remainder
\begin{equation*}
\widetilde{G}( x; \eps ) = 1 + \frac{\partial \widetilde{G}( x; \eps )}{\partial \eps} \Big|_{\eps = 0} \, \eps + \eps^2 \int_0^1 \frac{\partial^2 \widetilde{G}( x; \eps )}{\partial \eps^2} \Big|_{\eps \mapsto \eps v} \left( 1 - v \right) \dd v,
\end{equation*}
where
\begin{equation*}
\frac{\partial^2 \widetilde{G}( x; \eps )}{\partial \eps^2} = \left( \log x \right)^2 + \left( \log x \right) \widetilde{g}_1( \eps ) + \widetilde{g}_2( \eps )
\end{equation*}
and the functions $\widetilde{g}_1( \eps )$ and $\widetilde{g}_2( \eps )$ are continuous on $[0,1]$ and depend only on $L$. This shows that in the second integral in \eqref{eq:int.representation} logarithmic singularities appear; that is, apart from the term with $\frac{\partial \widetilde{G}( x; \eps )}{\partial \eps} \big|_{\eps = 0}$ (that contributes to the desired limit) the remainder is
\begin{equation*}
\eps \left[ \frac{1}{2} \int_{\mathbb{S}^d} \frac{- 2^{1-2\beta}}{2} \frac{\Pochhsymb{1-\beta}{L}}{L!} \frac{\left( \PT{x} \cdot \PT{z}-\PT{y} \cdot \PT{z} \right)^{2L}}{\left( 2 - \PT{x} \cdot \PT{z}-\PT{y} \cdot \PT{z} \right)^{2L + 1 - 2\beta}} \left( \log\left( \frac{\PT{x} \cdot \PT{z}-\PT{y} \cdot \PT{z}}{2 - \PT{x} \cdot \PT{z} - \PT{y} \cdot \PT{z}} \right)^2 \right)^2 \dd \sigma_d( \PT{z} ) + \cdots \right]
\end{equation*}
Since $1 - \PT{x} \cdot \PT{z} - \PT{y} \cdot \PT{z} \geq c > 0$ for $\PT{y} \neq \PT{x}$ and $\PT{y} \neq - \PT{x}$ and $2\beta > 2L + 1$, the integrals above can be bounded independently of $\eps$. Thus, the remainder above vanishes as $\eps \to 0$. Hence, we can interchange the limit in the second integral in \eqref{eq:int.representation}.

\section*{Acknowledgements}

The authors are grateful to Michael Gnewuch for a fruitful discussion which lead to the proof in Appendix A.

\bibliographystyle{abbrv}
\bibliography{bibliography}
\end{document}